\newtheorem{theorem}{Theorem}[section]
\newtheorem{lemma}[theorem]{Lemma}
\newtheorem{corollary}[theorem]{Corollary}
\theoremstyle{remark}
\newtheorem{remark}[theorem]{Remark}
\numberwithin{equation}{section}
\newcommand{\N}{{\mathbb N}} 
\newcommand{\R}{{\mathbb R}}
\newcommand{\Rn}{{\mathbb R}^n}
\newcommand{\s}{\mathbb{S}}
\newcommand{\sn}{{\mathbb{S}^{n-1}}}
\newcommand{\K}{{\mathcal K}}
\newcommand{\Kn}{{\mathcal K}^n}
\newcommand{\cFn}{\mathcal{F}^n}
\newcommand{\cEu}{\mathcal{E}_u}
\newcommand{\cMu}{\mathcal{M}_u}
\newcommand{\Sdisk}{\bar{S}}
\newcommand{\hm}{\mathcal H}
\newcommand{\fconv}{{\mbox{\rm Conv}(\R^n)}} 
\newcommand{\fconvs}{{\mbox{\rm Conv}_{{\rm sc}}(\R^n)}} 
\newcommand{\fconvf}{{\mbox{\rm Conv}(\R^n; \R)}}
\newcommand{\fconvcd}{{\mbox{\rm Conv}_{{\rm cd}}(\R^n)}} 
\newcommand{\proj}{\operatorname{proj}}
\newcommand{\gnom}{\operatorname{gno}} 
\newcommand{\infconv}{\mathbin{\Box}} 
\newcommand{\sq}{\mathbin{\vcenter{\hbox{\rule{.3ex}{.3ex}}}}} 
\DeclareMathOperator*{\argmin}{arg\,min}
\newcommand{\MA}{\text{\rm MA}} 
\newcommand{\MAp}{\text{\rm MA}^{\!*}} 
\newcommand{\oZZb}[2]{\overline{\operatorname{V}}_{#1,#2}} 
\newcommand{\Gr}{\operatorname{G}}
\newcommand{\Grass}[2]{\operatorname{G}(#2,#1)}
\newcommand{\Aff}[2]{\operatorname{A}(#2,#1)}
\newcommand{\SO}{\operatorname{SO}}
\renewcommand{\d}{\,\mathrm{d}}
\newcommand{\Hess}{{\operatorname{D}}^2}
\newcommand{\ind}{{\rm\bf I}}
\newcommand{\interior}{\operatorname{int}} 
\newcommand{\dom}{\operatorname{dom}} 
\newcommand{\epi}{\operatorname{epi}} 
\newcommand{\supp}{\operatorname{supp}} 
\newcommand{\cl}{\operatorname{cl}} 
\newcommand{\ext}{\operatorname{ext}} 
\newcommand{\pos}{\operatorname{pos}} 
\newcommand{\lin}{\operatorname{span}} 
\newcommand{\reg}{\operatorname{reg}} 
\title{Kubota-type Formulas and Supports\\of Mixed Measures}
\author{Daniel Hug, Fabian Mussnig, and Jacopo Ulivelli}
\date{}
\begin{document}
\maketitle

\begin{abstract}
Kubota’s integral formula expresses the intrinsic volumes of a convex body as averages over its projections onto linear subspaces. In this work, we introduce a new class of Kubota-type formulas for mixed area measures adapted to rotations around a fixed axis, which encode a crucial disintegration property. Our construction is motivated by applications to valuations on convex functions. In the latter framework, we obtain corresponding statements for (conjugate) mixed Monge--Amp\`ere measures. As a by-product, we characterize supports of mixed area and mixed Monge--Amp\`ere measures, thereby confirming a special case of a conjecture by Schneider.

\bigskip

{\noindent
{\bf 2020 AMS subject classification:} 52A39 (52A41, 52A20, 52B45)\\
{\bf Keywords:} mixed area measure, support, mixed Monge--Amp\`ere measure, convex function, Kubota formula
}
\end{abstract}

\section{Introduction and Main Results}

The interplay between geometry and analysis is a central theme in modern convexity (see, for example \cite[Chapter 9]{Artstein_Giannopoulos_Milman_II} or \cite{Gardner_BM}). In the geometric setting, we first develop new Kubota-type formulas for mixed area measures, which are particularly suited for situations that involve rotations around a fixed axis. We use these identities to characterize the supports of a special family of mixed area measures, which allows us to confirm a long-standing conjecture by Schneider in a special case. After this, we develop the necessary tools to transfer our geometric results to the analytic setting. This allows us to derive analogous integral geometric identities and characterizations of supports of (conjugate) mixed Monge--Amp\`ere measures of convex functions, highlighting their natural behavior under rotations and projections/restrictions. These results not only reveal a common underlying structure between geometry and analysis but also have direct implications for the theory of valuations on convex functions.

\label{se:introduction}
\paragraph*{Mixed Area Measures}
Let $\Kn$, $n \in \N$, denote the set of non-empty, compact, convex subsets of $\Rn$, whose elements are commonly referred to as \emph{convex bodies}. For a convex body $K\in\Kn$, we denote by $S_{n-1}(K,\cdot)$ its \emph{surface area measure}. If $K$ has non-empty interior and $\omega$ is a Borel subset of the unit sphere $\sn$, then $S_{n-1}(K,\omega)$ is simply the ${(n-1)}$-dimensional Hausdorff measure of the set of all boundary points $x\in \partial K$ at which $K$ has an outer unit normal in $\omega$.
The surface area measure plays a fundamental role in convexity, most prominently in the Minkowski problem, where one asks under which conditions a given Borel measure on $\sn$ is the surface area measure of a convex body $K$. In particular, every full-dimensional body $K$ is uniquely determined by $S_{n-1}(K,\cdot)$ up to translations. See, for example, \cite{Boeroeczky_LYZ_log_Mink,Boeroeczky_LYZ_Zhao_Gauss,Gardner_Hug_Weil_Xing_Ye_2019,Huang_Liu_Xi_Zhao_dual_LC,Huang_LYZ_Acta,Ivaki_Milman_Uniqueness,Kryvonos_Langharst,Livshyts_Minkowski,Mui_Aleksandrov, Haberl_LYZ} for some recent developments and further references.

\medskip

More generally, we consider the \emph{mixed area measure} $S$, which is the unique symmetric map from $(\Kn)^{n-1}$ into the space of finite Borel measures on $\sn$ such that
\begin{equation}
\label{eq:def_mixed_s}
S_{n-1}(\lambda_1 K_1 + \cdots + \lambda_m K_m, \cdot) = \sum_{i_1,\ldots, i_{n-1}=1}^m \lambda_{i_1}\cdots \lambda_{i_{n-1}} S(K_{i_1},\ldots, K_{i_{n-1}},\cdot)
\end{equation}
for every $K_1,\ldots,K_m\in\Kn$, $\lambda_1\ldots, \lambda_m\geq 0$, and $m\in \N$. Here,
$$\lambda_1 K_1 + \cdots + \lambda_m K_m=\{ \lambda_1x_1+\cdots+\lambda_mx_m: x_i\in K_i \, \text{ for }i=1,\ldots,m\}$$
is a \emph{Minkowski combination} of the given convex bodies $K_1,\ldots,K_m$ (the \emph{Minkowski sum} of the convex bodies $\lambda_1K_1,\ldots,\lambda_mK_m$). A particularly relevant family of mixed measures are the $j$th \emph{area measures} $S_j$, $j\in\{0,\ldots,n-1\}$, which are given by
\begin{equation}
\label{eq:s_j_mixed_s}
S_j(K,\cdot)=S(K[j],B^n[n-1-j],\cdot),
\end{equation}
where on the right-hand side the convex body $K$ is repeated $j$ times and the Euclidean unit ball $B^n$ is repeated $(n-1-j)$ times. When $1<j<n-1$, the associated Minkowski-type problem for these measures is known as the Christoffel--Minkowski problem, a central open question in convex geometry and geometric analysis that is still open in full generality. See, for example, \cite{guan_ma_chris_mink_I,hyz_bams}.

\paragraph*{Kubota-type Formulas}

Let $\{e_1,\ldots,e_n\}$ denote the standard orthonormal basis of $\Rn$. In the following we write $\ell$ for $\lin\{e_n\}$, i.e., the ``vertical line'', and set $L=\ell^\perp\cong\R^{n-1}$. The geometric results of this article concern the measures that can be understood as variants of \eqref{eq:s_j_mixed_s}, where the unit ball $B^n$ is replaced by
$$B_L^{n-1}=\{(x_1,\ldots,x_n)\in\Rn : x_1^2 + \cdots + x_{n-1}^2 \leq 1, x_n=0\},$$
which in other words is the $(n-1)$-dimensional unit ball in $L$. For this purpose, we define
\[
\Sdisk_j(K,\cdot)=S(K[j], B_L^{n-1}[n-1-j],\cdot)
\]
for $K\in\Kn$ and $j\in\{0,\ldots,n-1\}$. While our primary interest in these measures stems from their applications in the analytic setting (see below), let us briefly emphasize their significance. As recently shown by Brauner, Hofst\"atter, and Ortega-Moreno in\cite{Brauner_Hofstaetter_Ortega-Moreno}, whose preprint appeared after a first version of this article was posted on arXiv, the measures $\Sdisk_j(K,\cdot)$ occur naturally in the investigation of zonal valuations on convex bodies (cf.\ \cite{Knoerr_zonal,Schuster_Wannerer}), where, in contrast to the usual area measures, they allow a representation without singular densities. Shortly afterwards, the same authors presented a solution to the Christoffel--Minkowski problem for bodies of revolution in \cite{Brauner_Hofstaetter_Ortega-Moreno_CM}, which is based on these measures and in which the following Kubota-type formula, the first main result of this article, plays a central role. For further recent applications of this formula to Minkowski problems, see also \cite{Brauner_Hofstaetter_Ortega-Moreno_Cdisk,Mussnig_Ulivelli_CM}.

\begin{theorem}
\label{thm:ck_sam}
If $1\leq j \leq n-1$ and $f\colon \sn\to[0,\infty)$ is measurable, then
\begin{multline*}
\frac{1}{\kappa_{n-1}} \int_{\sn} f(z) \d S(K_1,\ldots,K_j,B_L^{n-1}[n-1-j],z)\\
=\frac{1}{\kappa_j} \int_{\Gr(\ell,j+1) } \int_{\s_E^{j}} f(z) \d S_E(\proj_E K_1,\ldots,\proj_E K_j,z)\d E
\end{multline*}
for $K_1,\ldots,K_j\in\Kn$.
\end{theorem}
\noindent
Here, $\Gr(\ell,j+1)$ denotes the Grassmannian of all $(j+1)$-dimensional linear subspaces of $\Rn$ that contain the line $\ell$ and integration on this set is understood with respect to the unique probability measure that is invariant under those rotations that map $\ell$ onto itself. In addition, we write $\proj_E K$ for the orthogonal projection of $K\in\Kn$ onto $E\in\Gr(\ell,j+1)$. To prevent misconceptions, we further denote the mixed area measure with respect to the ambient space $E$ by $S_E$ and remark that it is a Borel measure on $\s_E^{j}$, i.e., the $j$-dimensional unit sphere in $E$. Lastly, the constant $\kappa_i=\pi^{i/2}/\Gamma((i/2)+1)$ denotes the $i$-dimensional volume of the unit ball in $\R^i$ for $i\in\{0,\ldots,n\}$.

Let us point out that for $K_1=\cdots=K_j=K$ we can rewrite Theorem~\ref{thm:ck_sam} as
\begin{equation}
\label{eq:bar_sk_rewrite}
\Sdisk_j(K,\omega) = \frac{\kappa_{n-1}}{\kappa_j} \int_{\SO(n-1)} S_j'(\proj_{\vartheta \bar{E}_{j+1}} K,\omega \cap \vartheta \bar{E}_{j+1}) \d\vartheta
\end{equation}
for Borel sets $\omega \subseteq \sn$, where $\SO(n-1)$ is the group of rotations that fix $e_n$ and integration is understood with respect to the Haar probability measure on this group. Furthermore, $\bar{E}_{j+1}$ is some fixed element of $\Gr(\ell,j+1)$ and $S_j'$ denotes the usual surface area measure in the ambient space $\vartheta \bar{E}_{j+1}$. Equation \eqref{eq:bar_sk_rewrite}, which should be compared with \cite[Equation (5.4)]{Schneider_75}, not only shows that the measures $\Sdisk_j(K,\cdot)$ naturally encode the action of $\SO(n-1)$, but also provides a crucial disintegration property for these measures. They are, therefore, ideal tools to study bodies of revolution.

We remark that the existence of formulas of the above type was already indicated in \cite[Section 32]{Bonnesen_Fenchel}. Moreover, a new proof of Theorem~\ref{thm:ck_sam} was obtained in the aforementioned article \cite{Brauner_Hofstaetter_Ortega-Moreno} as a consequence of a Hadwiger-type theorem together with further integral geometric identities for the measures $\bar{S}_j(K,\cdot)$. Among these results is an additive kinematic formula for these measures, which in a first version was previously shown in \cite{Hug_Mussnig_Ulivelli_2} by the authors of the present article. For another recent integral geometric formula, motivated by problems in stereology and employing averaging over subspaces that contain a fixed subspace, see \cite[Theorem 2]{Dare_Kiderlen}. 

\paragraph*{Supports} 
Of particular interest is the support of a (mixed) area measure, which is defined as the complement of the largest open subset of the unit sphere on which the measure vanishes. The support of $S_{n-1}(K,\cdot)$ is geometrically characterized as the closure of the set of \emph{extreme unit normal vectors} of $K$. These are the vectors $z\in\sn$ which are normal vectors to $K$ at some $x\in\partial K$ and such that $z$ cannot be written as the sum of two linearly independent normal vectors of $K$ at $x$. We refer to Section~\ref{se:desc_supp} for details.

The situation becomes considerably more complex for general mixed area measures. Finding a geometric description of the support of $S(K_1,\ldots,K_{n-1},\cdot)$ with arbitrary $K_1,\ldots,K_{n-1}\in\Kn$ is a long-standing open problem, with highly relevant applications. Most notably, it characterizes the equality cases in the monotonicity of mixed volumes and appears to be a crucial point in understanding the equality cases of the Alexandrov--Fenchel inequality \cite{Bernig_Kotrbaty_Wannerer,Hug_Reichert_AF,Kotrbaty_Wannerer_HR_AF,shenfeld_van_handel_acta}. In 1985, Schneider~\cite{Schneider_85} conjectured that the support of $S(K_1,\ldots,K_{n-1},\cdot)$ is the closure of the set of \emph{\((K_1,\ldots,K_{n-1})\)-extreme vectors}, which are denoted by $\ext(K_1,\ldots,K_{n-1})$ and which we define in Section~\ref{se:proof_thm_supp_ext}.

\medskip

So far, Schneider's conjecture has been confirmed in a few cases. Shenfeld and van Handel~\cite{shenfeld_van_handel_acta} treated tuples of the form $(B^n,C_1,\ldots,C_{n-2})$, where the bodies $C_i$ are either zonoids or smooth. More recently, the first-named author and Reichert generalized this to the larger class of polyoids \cite{Hug_Reichert_support}. Very recently, and after a first version of this article appeared, van Handel and Wang \cite{van_handel_wang} proved that Schneider's conjecture holds for tuples of the form $(K,L,\ldots,L)$ for arbitrary convex bodies $K,L\in\Kn$, thereby fully settling the three-dimensional case.

Long before these recent advances and predating his general conjecture by ten years, Schneider \cite{Schneider_75} described the support of the area measure $S_j(K,\cdot)$ as the closure of the set of all \emph{$(n-1-j)$-extreme unit normal vectors} of $K$ (see Theorem~\ref{thm:supp_s_j} below). He thereby confirmed a conjecture of Weil \cite{weil_73}, who treated the special case $j=1$. This implies the nesting
\begin{equation}
\label{eq:supp_s_j_nested}
\supp S_k(K,\cdot)\subseteq \supp S_j(K,\cdot)
\end{equation}
for $0\leq j\leq k\leq n-1$. For $j=0$ this relation is trivial, since $S_0(K,\cdot)=S_{n-1}(B^n,\cdot)$ and hence has full support. By contrast, the support of $S_{n-1}(K,\cdot)$ might be very small. For example, when $K$ is a polytope it is supported on the facet normals. Firey also raised the validity of this inclusion in his invited address at the 1974 ICM in Vancouver \cite{Firey_ICM}.

\medskip

We confirm the following case of Schneider's conjecture, where $\cl(A)$ denotes the closure of $A\subseteq \sn$. Recall that $\Sdisk_j(K,\cdot)=S(K[j], B_L^{n-1}[n-1-j],\cdot)$ for $K\in\Kn$ and $1\leq j\leq n-1$.

\begin{theorem}
\label{thm:supp_ext}
If $1\leq j\leq n-1$, then
$$\supp \Sdisk_j(K,\cdot)=\cl \big(\ext(K[j],B_L^{n-1}[n-1-j])\big)$$
for $K\in\Kn$.
\end{theorem}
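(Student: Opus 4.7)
The plan is to prove Theorem~\ref{thm:supp_ext} by reducing it to Schneider's classical support theorem for pure area measures (Theorem~\ref{thm:supp_s_j}) via a Kubota-type integral representation. The central idea is to express $S(K[j], B_L^{n-1}[n-1-j], \cdot)$ as an average of the $(j-1)$-th area measures of the projections $K|E$ over $j$-dimensional linear subspaces $E$ containing the vertical line $\ell = \lin\{e_n\}$, and then transfer the support information fiberwise.

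The first and main step is to establish the Kubota-type formula. Writing $\Gr(\ell, j)$ for the Grassmannian of $j$-dimensional linear subspaces of $\Rn$ containing $\ell$, with its $\SO(n-1)$-invariant probability measure $\nu_{\ell,j}$, the target identity takes the form
\[
\int_\sn f\, dS(K[j], B_L^{n-1}[n-1-j], \cdot) = c_{n,j} \int_{\Gr(\ell, j)} \int_{\sn \cap E} f\, dS_{j-1}^E(K|E, \cdot)\, d\nu_{\ell,j}(E)
\]
for every continuous $f: \sn \to \R$, where $S_{j-1}^E$ denotes the $(j-1)$-th area measure of $K|E$ computed intrinsically inside the $j$-dimensional subspace $E$ and regarded as a Borel measure on $\sn \cap E$. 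To prove the identity, I would expand $S_{n-1}(K + t B_L^{n-1}, \cdot)$ as a polynomial in $t \geq 0$ and match the coefficient of $t^{n-1-j}$ against the integrated surface area measure of $K|E + tB_E$ (with $B_E = B^n \cap E$) over $E \in \Gr(\ell, j)$, using the projection formula relating mixed volumes involving flat bodies to lower-dimensional mixed volumes, together with the $\SO(n-1)$-symmetry of $B_L^{n-1}$. Polarizing in the Minkowski combinations and fixing the constant $c_{n,j}$ by comparing both sides in a reference case such as $K = B^n$ should yield the identity.

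Granted the Kubota-type formula, non-negativity of the integrand together with weak continuity of $E \mapsto S_{j-1}^E(K|E, \cdot)$ implies
\[
\supp S(K[j], B_L^{n-1}[n-1-j], \cdot) = \cl \bigcup_{E \in \Gr(\ell, j)} \supp S_{j-1}^E(K|E, \cdot).
\]
Theorem~\ref{thm:supp_s_j} applied in the ambient space $E$ (with ambient dimension $j$ and index $j-1$) identifies $\supp S_{j-1}^E(K|E, \cdot)$ with the closure of the ordinary extreme unit normals of $K|E$ within $\sn \cap E$. The proof then concludes by establishing the set-theoretic identity
\[
\cl \bigcup_{E \in \Gr(\ell, j)} \{v \in \sn \cap E : v \text{ is an extreme normal of } K|E\} = \cl\, \ext(K[j], B_L^{n-1}[n-1-j]),
\]
which follows by unpacking the definition of extremality for the tuple with repetitions: using the orthogonal decomposition $\Rn = E \oplus E^\perp$, the $(n-1-j)$ copies of the flat ball $B_L^{n-1}$ fill out the tangent directions transverse to $E$, so that $(K[j], B_L^{n-1}[n-1-j])$-extremality of $v$ reduces to the existence of some $E \in \Gr(\ell, j)$ with $v \in \sn \cap E$ for which $v$ is an ordinary extreme normal of $K|E$.

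The principal obstacle is the Kubota-type formula itself. Classical Cauchy--Kubota identities integrate uniformly over the full Grassmannian, whereas here the integration domain is the proper sub-Grassmannian $\Gr(\ell, j)$, dictated by the reduced $\SO(n-1)$-symmetry of $B_L^{n-1}$; identifying the correct normalization constant and carrying out the disintegration that matches polynomial coefficients in the mixed-volume expansion will require careful bookkeeping with invariant measures on $\Gr(\ell, j)$. A secondary difficulty is the set-theoretic identification of extreme normals, where the flat ball $B_L^{n-1}$ forces one to trace Schneider's definition of $(K_1, \ldots, K_{n-1})$-extremality through the orthogonal splitting and to verify that the $K$-contribution and the $B_L^{n-1}$-contribution decouple cleanly along subspaces through $\ell$; the extreme case $j = 1$, where $\Gr(\ell, 1) = \{\ell\}$ forces the support into $\{\pm e_n\}$, serves as a useful consistency check.
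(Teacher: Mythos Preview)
Your overall strategy coincides with the paper's: establish a Kubota-type formula expressing $S(K[j],B_L^{n-1}[n-1-j],\cdot)$ as an integral of top area measures of projections of $K$ over a restricted Grassmannian, apply Theorem~\ref{thm:supp_s_j} fiberwise, and then identify the resulting union of extreme normals with $\ext(K[j],B_L^{n-1}[n-1-j])$ via touching-space computations.

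The gap is a consistent off-by-one error in the dimension of the subspaces: the correct Grassmannian is $\Gr(\ell,j+1)$, not $\Gr(\ell,j)$, and the inner measure is $S_E(\proj_E K[j],\cdot)$, the top area measure in the $(j{+}1)$-dimensional space $E$ (this is Theorem~\ref{thm:ck_sam}). Your own consistency check at $j=1$ actually reveals the error rather than confirming the formula: with $\Gr(\ell,1)=\{\ell\}$ your right-hand side is concentrated on $\{\pm e_n\}$, but for $K=B^n$ one has $TS(B^n,z)=z^\perp$ for all $z$, and Lemma~\ref{le:descr_ext} then shows that every $z\in\sn$ is $(B^n,B_L^{n-1}[n-2])$-extreme, so the support is all of $\sn$. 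The proposed derivation by matching the $t^{n-1-j}$-coefficient of $S_{n-1}(K+tB_L^{n-1},\cdot)$ against that of $S_{j-1}^E((K|E)+tB_E,\cdot)$ fails for the same reason: the latter is a polynomial of degree $j-1$ in $t$ and in general has no $t^{n-1-j}$-term. Once the dimension is corrected to $j+1$, your three-step outline is exactly the paper's argument; the Kubota formula is Theorem~\ref{thm:ck_sam} (proved by iteratively peeling off segments via Lemma~\ref{le:mx_area_meas_proj} and averaging them back into $B_L^{n-1}$ via Lemma~\ref{le:int_sn_ball}), the fiberwise support identification is Theorem~\ref{thm:descrp_supp_s_proj}, and the set-theoretic identity is Lemma~\ref{lem:ext=set}.
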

The inclusion
\begin{equation}
\label{eq:basicsupportinclusion}
\supp \Sdisk_j(K,\cdot)\subseteq \cl\big(\ext(K[j],B_L^{n-1}[n-1-j])\big)
\end{equation}
has already been established by Schneider \cite[Proposition 3.8]{Schneider_88}, where it appears as a special case. Far more generally, van Handel and Wang showed in the above-mentioned \cite{van_handel_wang} that 
\[
\supp S(K_1,\ldots,K_{n-1}) \subseteq \cl\big(\ext(K_1,\ldots,K_{n-1})\big)
\]
holds for any $K_1,\ldots,K_{n-1}\in\Kn$. The proof that is presented here is independent of \cite{Schneider_88} and uses the new Kubota-type formula Theorem~\ref{thm:ck_sam}, as well as a description of $\supp \Sdisk_j(K,\cdot)$ in terms of extreme unit normal vectors of projections of $K$ onto subspaces of dimension $j+1$ (see Theorem~\ref{thm:descrp_supp_s_proj} below). The strategy of our proof is somewhat similar to Schneider's treatment of the classical area measures in \cite{Schneider_75}.

As a consequence of Theorem~\ref{thm:supp_ext}, we obtain the following nesting of supports, which is analogous to \eqref{eq:supp_s_j_nested}.

\begin{corollary}
\label{cor:supp_s_nested}
If $1\leq j \leq k \leq n-1$, then
$$\supp \Sdisk_k(K,\cdot)\subseteq \supp \Sdisk_j(K,\cdot)$$
for $K\in\Kn$.
\end{corollary}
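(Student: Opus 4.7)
The plan is to derive Corollary~\ref{cor:supp_s_nested} directly from Theorem~\ref{thm:supp_ext}, in exact parallel with how the classical nesting \eqref{eq:supp_s_j_nested} follows from Schneider's description of $\supp S_j(K,\cdot)$ in terms of $(n-1-j)$-extreme normal vectors. Since the corollary assumes $1 \leq j \leq k \leq n-1$, Theorem~\ref{thm:supp_ext} applies to both tuples, so the two supports in question equal $\cl(\ext(K[j], B_L^{n-1}[n-1-j]))$ and $\cl(\ext(K[k], B_L^{n-1}[n-1-k]))$, respectively. The corollary thus reduces to the set-theoretic inclusion
\[
\ext(K[k], B_L^{n-1}[n-1-k]) \subseteq \cl\bigl(\ext(K[j], B_L^{n-1}[n-1-j])\bigr),
\]
and by iterating a one-step inclusion it suffices to treat $k = j + 1$.

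What remains is then a monotonicity statement for mixed extreme vectors: swapping one copy of $B_L^{n-1}$ for a copy of $K$ in the tuple can only shrink the set of extreme unit vectors. I would prove this directly from the definition of $(K_1,\ldots,K_{n-1})$-extreme vectors recalled in Section~\ref{se:proof_thm_supp_ext}, by checking that any ``non-extremity witness'' (a splitting of $u$ into linearly independent vectors compatible with the face/normal-cone structure of each body in the tuple) for $(K[j+1], B_L^{n-1}[n-2-j])$ already serves as such a witness for $(K[j], B_L^{n-1}[n-1-j])$. The direction of the swap is crucial: a witness for the $(j+1)$-$K$ tuple already supplies the required compatibility for $j$ copies of $K$ and, a fortiori, for the additional copy of $B_L^{n-1}$, which has a richer normal-cone structure than $K$ at every relevant direction.

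The main obstacle I foresee is making this swap argument fully rigorous in the precise formulation of $(K_1,\ldots,K_{n-1})$-extremity. If the direct combinatorial check turns out to be less immediate than expected, an alternative route is to invoke Theorem~\ref{thm:descrp_supp_s_proj}, which describes $\supp S(K[j], B_L^{n-1}[n-1-j], \cdot)$ in terms of extreme unit normal vectors of projections of $K$ onto $(j+1)$-dimensional linear subspaces containing $\ell$, and to reduce the nesting to the classical one-body inclusion \eqref{eq:supp_s_j_nested} applied within each such projection. Either way, no new deep input is needed beyond Theorem~\ref{thm:supp_ext}, so I expect the proof to be genuinely short.
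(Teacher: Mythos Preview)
Your high-level plan—reduce via Theorem~\ref{thm:supp_ext} to the inclusion $\ext(K[k],B_L^{n-1}[n-1-k])\subseteq \ext(K[j],B_L^{n-1}[n-1-j])$—is exactly what the paper does. The gap is in your proposed verification of that inclusion. In the touching-space formulation of Section~\ref{se:proof_thm_supp_ext}, being $(K_1,\ldots,K_{n-1})$-extreme is an \emph{existence} statement (there are linearly independent $w_i\in TS(K_i,z)$ spanning $z^\perp$), so extremity, not non-extremity, has a witness; your contrapositive is oriented the wrong way. More substantively, your claim that a vector $w\in TS(K,z)$ works ``a fortiori'' for $B_L^{n-1}$ is false: for $z\notin\{\pm e_n\}$ one has $TS(B_L^{n-1},z)=z^\perp\cap L$, which is an $(n-2)$-dimensional subspace, generally strictly smaller than $TS(K,z)$. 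So one cannot simply hand a $K$-slot vector over to a $B_L^{n-1}$-slot. (A direct argument can still be salvaged—one observes that not all of $w_1,\ldots,w_{k}$ can lie in $L$, relabels, and then completes inside $z^\perp\cap L$—but this is precisely the content of Lemma~\ref{le:descr_ext}.)

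The paper avoids this by invoking Lemma~\ref{le:descr_ext}, which characterizes $z\in\ext(K[j],B_L^{n-1}[n-1-j])$ as $\dim TS(K,z)\ge j$ together with (for $z\notin\{\pm e_n\}$) the $j$-independent side condition $TS(K,z)\not\subseteq L$; the case $j=n-1$ holds trivially. From this the inclusion is immediate, since $\dim TS(K,z)\ge k$ implies $\dim TS(K,z)\ge j$. Use that lemma rather than arguing from the raw definition.
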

Note that the case $j=0$ is excluded from Corollary~\ref{cor:supp_s_nested} as the statement fails in that case. Indeed, $\supp \Sdisk_0(K,\cdot)= \supp S(B_L^{n-1}[n-1],\cdot)=\{\pm e_n\}$, while the support of $\Sdisk_k(K,\cdot)$ might be all of $\sn$. For example, when $k=n-1$ and $K=B^n$.

\paragraph*{Mixed Monge--Amp\`ere Measures}
The geometric results above are strongly motivated by their applications to the analytic setting. For a convex function $v\colon\Rn\to\R$ we denote by $\MA(v;\cdot)$ the \emph{Monge--Amp\`ere measure} associated with $v$, which is a Radon measure on $\Rn$ (see Section~\ref{se:convex_functions} for details). If $v$ is additionally assumed to be in $C^2(\Rn)$, then $\MA(v;\cdot)$ is absolutely continuous with respect to the Lebesgue measure on $\Rn$ and
\begin{equation}
\label{eq:MA_det_hess}
\d\MA(v;x)=\det(\Hess v(x))\d x,
\end{equation}
where $\Hess v(x)$ denotes the Hessian matrix of $v$ at $x\in\Rn$. Similar to the definition of the mixed area measure, the \emph{mixed Monge--Amp\`ere measure} $\MA(w_1,\ldots,w_n;\cdot)$ of convex functions $w_1,\ldots,w_n\colon \Rn\to\R$ is defined by the relation
\begin{equation}
\label{eq:def_mixed_ma}
\MA(\lambda_1 v_1+\cdots + \lambda_m v_m;\cdot) = \sum_{i_1,\ldots,i_n=1}^m \lambda_{i_1}\cdots \lambda_{i_n} \MA(v_{i_1},\ldots,v_{i_n};\cdot),
\end{equation}
where $m\in\N$, $v_1,\ldots,v_m\colon \Rn\to\R$ are convex, and $\lambda_1,\ldots,\lambda_m\geq 0$.
Here, we assume in addition that this measure is symmetric in its entries.

\medskip

In Section~\ref{se:properties_ma}, we establish new connections of mixed Monge--Amp\`ere measures with area measures and mixed volumes of convex bodies in both $n$ and $(n+1)$-dimensional space and study their behavior when we restrict functions to linear subspaces. We then use these insights together with the Kubota-type formula Theorem~\ref{thm:ck_sam} to obtain a corresponding statement for mixed Monge--Amp\`ere measures. The latter directly improves the main results of \cite{Colesanti-Ludwig-Mussnig-6}, where such formulas were established under the additional assumption that the integrand is rotationally invariant.

Let
$$\fconvf=\{v\colon \Rn\to\R : v \text{ is convex}\}$$
denote the convex cone of real-valued convex functions on $\Rn$. 
\begin{theorem}
\label{thm:ck_ma}
If $1\leq j< n$ and $\varphi\colon\Rn\to[0,\infty)$ is measurable, then
\begin{multline*}
\frac{1}{\kappa_n} \int_{\Rn} \varphi(x)\d\MA(v_1,\ldots,v_j,h_{B^n}[n-j];x)\\
=\frac{1}{\kappa_j}\int_{\Grass{j}{n}}\int_E \varphi(x_E) \d\MA_E(v_1\vert_E,\ldots, v_j\vert_E;x_E)\d E
\end{multline*}
for $v_1,\ldots,v_j\in\fconvf$.
\end{theorem}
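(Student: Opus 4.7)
The plan is to derive Theorem~\ref{thm:ck_ma} from its geometric counterpart Theorem~\ref{thm:ck_sam} via the analytic--geometric dictionary between mixed Monge--Amp\`ere measures on $\Rn$ and mixed area measures of associated convex bodies in $\R^{n+1}$, which is developed in Section~\ref{se:properties_ma}. First I would reduce to a diagonal, smooth situation: both sides of the claimed identity depend multi-affinely on $(v_1,\ldots,v_j)$, so by the polarization underlying the definition of the mixed Monge--Amp\`ere measure it suffices to prove the equation when $v_1=\cdots=v_j=v$. A standard approximation (smoothing by convolution with a mollifier and using weak continuity of $\MA$ with respect to uniform convergence on compacta) then allows us to assume that $v$ is $C^2$ and uniformly convex on a ball containing $\supp\varphi$. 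One additionally needs joint measurability, in $E\in\Grass{j}{n}$, of the inner integral on the right-hand side, which follows from continuity of the restriction map $v\mapsto v|_E$ together with weak continuity of $\MA_E$.

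Second, using the results of Section~\ref{se:properties_ma}, I would identify the measure $\MA(v[j],h_{B^n}[n-j];\cdot)$ on $\Rn$ with the image, under an explicit gradient/epigraph projection, of a mixed area measure in $\R^{n+1}$ involving a convex body $K_v\subset\R^{n+1}$ associated with $v$ together with $(n-j)$ copies of the $n$-dimensional Euclidean unit ball that lies in the hyperplane $\{x_{n+1}=0\}$. Under this dictionary, restricting $v$ to a subspace $E\in\Grass{j}{n}$ corresponds to projecting $K_v$ onto the $(j+1)$-dimensional subspace $E\oplus\ell\subset\R^{n+1}$, where $\ell=\lin\{e_{n+1}\}$. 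The bijection $E\leftrightarrow E\oplus\ell$ between $\Grass{j}{n}$ and the $(j+1)$-dimensional subspaces of $\R^{n+1}$ containing $\ell$ is precisely the geometric setup over which Theorem~\ref{thm:ck_sam} integrates.

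Third, I would apply Theorem~\ref{thm:ck_sam} in $\R^{n+1}$ to $K_v$ (appearing $j$ times) together with the $n$-dimensional ball described above (appearing in the remaining slots), paired against the lifted integrand $\widetilde{\varphi}(y)=\varphi(\pi(y))$, where $\pi\colon\R^{n+1}\to\Rn$ is the projection onto the first $n$ coordinates. Pulling the resulting identity back through the dictionary, and tracking the normalizations that arise from the definition of mixed area measures and from the invariant probability measure on the Grassmannian, yields the claimed constants $1/\kappa_n$ on the left and $1/\kappa_j$ on the right.

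The main obstacle is the second step: setting up the dictionary precisely enough that it handles \emph{mixed} measures (not only the single MA measure, for which the correspondence is classical), and verifying that it commutes with restriction/projection in exactly the way needed to match the $(j+1)$-dimensional subspaces of Theorem~\ref{thm:ck_sam} with the $j$-dimensional subspaces appearing in Theorem~\ref{thm:ck_ma}. A further technical subtlety is that $h_{B^n}$ is not $C^2$ at the origin, so the left-hand side cannot be interpreted directly via mixed discriminants of Hessians, and the argument must proceed entirely through the geometric lift.
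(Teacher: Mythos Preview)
Your approach is essentially the paper's: both proofs lift to $(n+1)$-dimensional convex bodies via the dictionary of Section~\ref{se:connections_ma2} (specifically Corollary~\ref{cor:mixed_ma_s}, extended to general $\fconvs$ via Lemma~\ref{le:fconvs_fconvcd}), use that restriction of $v$ to $E$ corresponds to projection of the associated body onto $E\times\lin\{e_{n+1}\}$, and then apply Theorem~\ref{thm:ck_sam} in $\R^{n+1}$. Two minor deviations: the paper passes to the dual side ($u=v^*\in\fconvs$, Theorem~\ref{thm:ck_map}) and never needs your polarization/smoothing reduction, and the lifted integrand is $\tilde\varphi(\nu)=|\langle\nu,e_{n+1}\rangle|\,\varphi(\gnom(\nu))$ involving the gnomonic projection with its Jacobian factor, not the orthogonal projection $\pi$ you wrote.
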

Here, $h_{B^n}$ is the support function of $B^n$, that is, $h_{B^n}(x)=|x|$ for $x\in\Rn$, where $|\cdot|$ denotes the Euclidean norm (see Section~\ref{se:convex_bodies} for details). Furthermore, $\Grass{j}{n}$ denotes the Grassmannian of $j$-di\-men\-sion\-al linear subspaces of $\Rn$ and integration on $\Grass{j}{n}$ is always understood with respect to the Haar probability measure on this space. In addition, for given $E\in\Grass{j}{n}$ and $v_1,\ldots,v_j\in\fconvf$, we write $\MA_E(v_1\vert_E,\ldots,v_j\vert_E;\cdot)$ for the mixed Monge--Amp\`ere measure with respect to the ambient $j$-dimensional space $E$, of the restrictions of $v_1,\ldots,v_j$ to $E$.

\medskip

Of particular interest is the family of measures $\MA(v[j],h_{B^n}[n-j],\cdot)$ with $j\in\{0,\ldots,n\}$ and $v\in\fconvf$, which were studied in more detail in \cite{Colesanti-Ludwig-Mussnig-7}. Much like Theorem~\ref{thm:ck_sam} and equation \eqref{eq:bar_sk_rewrite}, Theorem~\ref{thm:ck_ma} offers a deeper insight by showing that these measures arise naturally from the usual Monge--Amp\`ere measure (in arbitrary dimension) together with the action of the rotation group. This fundamental structural result was subsequently applied to convex solutions of $k$-Hessian equations (cf. \cite{Trudinger_Wang_Hessian_I,Trudinger_Wang_Hessian_II}) in \cite{Mussnig_Ulivelli_CM}. Theorem~\ref{thm:ck_ma} as well as results from Section~\ref{se:properties_ma} were furthermore already utilized in \cite{Hug_Mussnig_Ulivelli_2,Mussnig_Ulivelli,Mouamine_Mussnig_2}, shortly after the first preprint of this work appeared on arXiv.

Concerning the supports of the aformentioned Monge--Amp\`ere-type measures, we obtain the following result as a consequence of Corollary~\ref{cor:supp_s_nested}.
\begin{theorem}
\label{thm:ma_supp}
If $1\leq j \leq k \leq n$, then
$$\supp \MA(v[k],h_{B^n}[n-k];\cdot)\subseteq \supp \MA(v[j],h_{B^n}[n-j];\cdot)$$
for $v\in\fconvf$.
\end{theorem}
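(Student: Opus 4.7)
The plan is to reduce Theorem~\ref{thm:ma_supp} directly to Corollary~\ref{cor:supp_s_nested} via the correspondence between mixed Monge--Amp\`ere measures of convex functions on $\Rn$ and mixed area measures of convex bodies in $\R^{n+1}$, which is developed in Section~\ref{se:properties_ma}. Concretely, to a convex function $v \in \fconvf$ we associate (an appropriate truncation of) its epigraph $\epi v \subset \R^{n+1}$ so as to obtain a convex body $K_v \in \K^{n+1}$. Setting $L = e_{n+1}^\perp \subset \R^{n+1}$, the identity $h_{B_L^n}(y,s) = |y| = h_{B^n}(y)$ matches the factor $h_{B^n}$ on the function side with the body $B_L^n$ on the body side. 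Outer normals to $\epi v$ all lie on the open lower hemisphere $\{u \in \s^n : u_{n+1} < 0\}$, which is in bijection with $\Rn$ via the gnomonic projection $(u',u_{n+1}) \mapsto -u'/u_{n+1}$; under this bijection, the restriction of $S(K_v[j],B_L^n[n-j],\cdot)$ to the lower hemisphere pushes forward, up to a smooth positive Jacobian, to $\MA(v[j],h_{B^n}[n-j];\cdot)$ on $\Rn$, and in particular the supports are identified.

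Granted this correspondence, the rest is immediate. Applying Corollary~\ref{cor:supp_s_nested} in ambient dimension $n+1$ to the convex body $K_v$, for $1 \leq j \leq k \leq n$ -- the index range that matches precisely the hypothesis of Theorem~\ref{thm:ma_supp} -- one obtains
\begin{equation*}
\supp S(K_v[k],B_L^n[n-k],\cdot) \subseteq \supp S(K_v[j],B_L^n[n-j],\cdot).
\end{equation*}
Since at least one copy of $K_v$ appears on each side, both supports concentrate strictly inside the open lower hemisphere -- the copies of $K_v$ preclude mass at the equator $\{u_{n+1} = 0\}$, which is where $\epi v$ is unbounded. Transferring this inclusion via the gnomonic bijection then yields $\supp \MA(v[k],h_{B^n}[n-k];\cdot) \subseteq \supp \MA(v[j],h_{B^n}[n-j];\cdot)$ on $\Rn$, the desired conclusion.

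The main obstacle is that $\epi v$ is never a bounded convex body for $v \in \fconvf$, so Corollary~\ref{cor:supp_s_nested} cannot be applied to $K_v = \epi v$ directly. I plan to address this by a truncation-and-limit argument: produce a family of bodies $K^{(R)} \in \K^{n+1}$ (for instance, by intersecting $\epi v$ with a vertical cylinder of radius $R$ and capping it at height $R$), apply Corollary~\ref{cor:supp_s_nested} to each $K^{(R)}$, and pass to the limit $R \to \infty$, using weak convergence of the relevant mixed area measures on compact subsets of the open lower hemisphere. The delicate point is verifying that support inclusions are preserved in the limit -- in particular that no Monge--Amp\`ere mass escapes to infinity or migrates to the equator -- and this stability is precisely what the explicit identities of Section~\ref{se:properties_ma} linking the two worlds should provide.
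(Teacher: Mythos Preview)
Your strategy is the paper's: lift to $\R^{n+1}$ via the correspondence of Section~\ref{se:connections_ma2}, apply Corollary~\ref{cor:supp_s_nested} there, and transfer back through the gnomonic projection. Two points need correcting.

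First, the convex body must be built from $\epi v^*$, not from $\epi v$. Corollary~\ref{cor:mixed_ma_s} identifies $\MAp(u_1,\ldots,u_n;\cdot)=\MA(u_1^*,\ldots,u_n^*;\cdot)$ with $S(K^{u_1},\ldots,K^{u_n},\cdot)$ on $\s^n_-$, so to reach $\MA(v[j],h_{B^n}[n-j];\cdot)$ you must take $u=v^*\in\fconvs$ and $K^u$ a truncation of $\epi v^*$. Your own consistency check already flags this: the body matching the function entry $h_{B^n}$ is $K^{(h_{B^n})^*}=K^{\ind_{B^n}}=B_L^n$, whereas $\epi h_{B^n}$ is an unbounded cone, not $B_L^n$.

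Second, the truncation-and-limit step is both unnecessary and, as stated, insufficient: support inclusions are not preserved under weak limits of measures in general. The paper avoids any limit. Lemma~\ref{le:fconvs_fconvcd} shows that for each bounded open $A\subset\Rn$ there is a \emph{single} $\bar u\in\fconvcd$ (obtained by Lipschitz regularization, not by cutting with a cylinder) such that $\MAp(v^*[j],\ind_{B^n}[n-j];\cdot)$ and $\MAp(\bar u[j],\ind_{B^n}[n-j];\cdot)$ agree exactly on $A$ for every $j$ simultaneously. This yields Lemma~\ref{le:supp_map_j}, identifying $\supp\MAp(v^*[j],\ind_{B^n}[n-j];\cdot)\vert_A$ with $\gnom\big(\supp S(K_A^{\bar u}[j],B_L^n[n-j],\cdot)\vert_{\gnom^{-1}(A)}\big)$ for one fixed body $K_A^{\bar u}\in\K^{n+1}$, after which Corollary~\ref{cor:supp_s_nested} finishes the proof in one line (this is Theorem~\ref{thm:finmonotone}).
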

In addition to the above, we give a description of $\supp \MA(v[j],h_{B^n}[n-j];\cdot)$ in terms of restrictions of $v$ to $j$-dimensional subspaces in Theorem~\ref{thm:supp_MAj_rest} below.

\paragraph*{Functional Intrinsic Volumes}
Our interest in mixed Monge--Amp\`ere measures stems from the fact that they play a crucial role in the emergent theory of valuations on convex functions \cite{Alesker_19,Colesanti-Ludwig-Mussnig-2,Colesanti-Ludwig-Mussnig-3,Colesanti-Ludwig-Mussnig-4,Colesanti-Ludwig-Mussnig-5,Colesanti-Ludwig-Mussnig-7,Colesanti-Ludwig-Mussnig-8,Colesanti-Ludwig-Mussnig-6,Hofstaetter_Knoerr,Hofstaetter_Schuster_BS,Knoerr_support,Knoerr_singular,Knoerr_smooth,Knoerr_Ulivelli,Li_Legendre,Milman_AGA_perspectives,Mouamine_Mussnig_2,Mussnig_polar}. It is one of the most active topics within the current trend of finding functional analogs of classical concepts and results from geometry \cite{Hoehner_symm,Hug_Mussnig_Ulivelli_2,Li_Mussnig,Li_Schuett_Werner_floating,Li_SLn,Milman_Rotem_mixed,Mussnig_Ulivelli,Rotem_Riesz,Roysdon_Xing}. We use Theorem~\ref{thm:ma_supp} to establish an elementary property for (renormalized) functional intrinsic volumes, which we define as follows. For $0\leq j\leq n$ and $\alpha\in C_c({[0,\infty)})$, where $C_c({[0,\infty)})$ denotes the set of continuous functions on $[0,\infty)$ with compact support, let $\oZZb{j}{\alpha}^*\colon \fconvf\to\R$ be the $j$th \emph{functional intrinsic volume} with density $\alpha$, which is given by
\begin{equation}
\label{eq:ozzb_ma}
\oZZb{j}{\alpha}^*(v)=\binom{n}{j}\frac{1}{\kappa_{n-j}}\int_{\Rn} \alpha(|x|)\d \MA(v[j],h_{B^n}[n-j];x)
\end{equation}
for $v\in\fconvf$. Note that $\MA(h_{B^n};\cdot) = \kappa_n \delta_{o}$,
where $\delta_o$ denotes the Dirac measure at the origin. Thus, $\oZZb{0}{\alpha}^*$ is constant and $\oZZb{0}{\alpha}^*(v)=\alpha(0)$ for every $v\in\fconvf$.

The operators $\oZZb{j}{\alpha}^*$ share many properties with the classical intrinsic volumes $V_j$ and can be seen as their generalizations to the functional setting. Most strikingly, they were characterized by a Hadwiger-type theorem in \cite{Colesanti-Ludwig-Mussnig-5}. See also \cite{Colesanti-Ludwig-Mussnig-8,Knoerr_singular} for alternative proofs. We remark that the original definition of functional intrinsic volumes uses Hessian measures, and the equivalence with definition \eqref{eq:ozzb_ma} is a consequence of \cite[Theorem 2.5]{Colesanti-Ludwig-Mussnig-7}.

\medskip

It is an elementary property of intrinsic volumes that if $K\in\Kn$ is such that $V_j(K)=0$, then also $V_k(K)=0$ for every $1\leq j\leq k\leq n$.
We obtain the following analog of this as a direct consequence of Theorem~\ref{thm:ma_supp} and \eqref{eq:ozzb_ma}, and refer to \cite{Mussnig_Ulivelli} for an application to inequalities.

\begin{corollary}
\label{cor:ma_supp_nested}
Let $1\leq j\leq k\leq n$ and let $\alpha\in C_c({[0,\infty)})$ be non-negative. If $v\in\fconvf$ is such that $\oZZb{j}{\alpha}^*(v)=0$, then also $\oZZb{k}{\alpha}^*(v)=0$.
\end{corollary}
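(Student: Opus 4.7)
The plan is to reduce the statement to the support-nesting result of Theorem~\ref{thm:ma_supp} via a standard topological fact about integrals of continuous non-negative functions against Radon measures. Concretely, if $\alpha\geq 0$ is continuous and $\int_{\Rn} \alpha(|x|)\d\mu(x)=0$ for a non-negative Radon measure $\mu$ on $\Rn$, then $\alpha(|\cdot|)$ vanishes on $\supp\mu$; the reason is that $U:=\{x\in\Rn : \alpha(|x|)>0\}$ is open (by continuity of $\alpha\circ|\cdot|$) and has $\mu$-measure zero, so by definition of the support, $U\cap \supp\mu=\emptyset$.

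Applying this observation with $\mu=\MA(v[j],h_{B^n}[n-j];\cdot)$ and using the hypothesis $\oZZb{j}{\alpha}^*(v)=0$ — which, since $\binom{n}{j}/\kappa_{n-j}>0$, is equivalent via \eqref{eq:ozzb_ma} to the vanishing of the corresponding integral — one obtains $\alpha(|x|)=0$ for every $x\in\supp\MA(v[j],h_{B^n}[n-j];\cdot)$. The hypothesis $1\le j\le k\le n$ lets us invoke Theorem~\ref{thm:ma_supp}, which gives the inclusion
$$\supp\MA(v[k],h_{B^n}[n-k];\cdot)\subseteq \supp\MA(v[j],h_{B^n}[n-j];\cdot).$$
Consequently, $\alpha(|x|)=0$ also for every $x\in\supp\MA(v[k],h_{B^n}[n-k];\cdot)$. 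Substituting this back into \eqref{eq:ozzb_ma} with $k$ in place of $j$ yields $\oZZb{k}{\alpha}^*(v)=0$.

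There is essentially no real obstacle here, since the substantial work is already carried out in Theorem~\ref{thm:ma_supp} (and upstream in Corollary~\ref{cor:supp_s_nested} and Theorem~\ref{thm:supp_ext}). The only point that requires mild care is that the argument genuinely uses the continuity of $\alpha\circ|\cdot|$; this is why the hypothesis $\alpha\in C_c([0,\infty))$ is sufficient, whereas a merely Borel measurable non-negative $\alpha$ would allow zero integral without forcing $\alpha$ to vanish on the topological support. The compactness of $\supp\alpha$ is not needed for the implication itself, but it fits naturally with the framework of functional intrinsic volumes in which \eqref{eq:ozzb_ma} is set up.
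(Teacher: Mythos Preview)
Your proposal is correct and matches the paper's approach: the paper simply states that the corollary is ``a direct consequence of Theorem~\ref{thm:ma_supp} and \eqref{eq:ozzb_ma}'' without further argument, and you have spelled out precisely that deduction.
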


\paragraph{Overview of the Paper} In Section~\ref{se:preliminaries} we explain our notation and list preliminary results. Section~\ref{se:supports} is dedicated to the main results on mixed area measures. First, we establish the Kubota-type formula Theorem~\ref{thm:ck_sam} in Section~\ref{se:kubota}. We will then use this result in Section~\ref{se:desc_supp} to prove Theorem~\ref{thm:descrp_supp_s_proj}, which shows that the supports of the mixed area measures that are considered in this article can be obtained from extreme unit vectors of projections of convex bodies. The proof of Theorem~\ref{thm:supp_ext} and Corollary~\ref{cor:supp_s_nested} is then completed in Section~\ref{se:proof_thm_supp_ext}. In the following Section~\ref{se:properties_ma} we establish several new properties of (conjugate) mixed Monge--Amp\`ere measures: first, we explore connections of these measures with mixed area measures and mixed volumes on $\Kn$ in Section~\ref{se:connections_ma}. In Section~\ref{se:connections_ma2}, however, we will prove fundamental connections with mixed areas measures of $(n+1)$-dimensional convex bodies. A result that explains how conjugate mixed Monge--Amp\`ere measures behave under projections of convex functions is the subject of Section~\ref{se:behavior_proj}. Our main findings for (conjugate) mixed Monge--Amp\`ere measures are then treated in Section~\ref{se:main_res_ma}. An equivalent version of Theorem~\ref{thm:ck_ma} for super-coercive convex functions is established in Section~\ref{se:kubota_ma} and compared with a Crofton-type formula, due to \cite{Colesanti-Hug_MM}, in Section~\ref{se:compare_crofton}. Lastly, the proof of Theorem~\ref{thm:ma_supp} can be found in Section~\ref{se:supp_ma}.

\section{Preliminaries}
\label{se:preliminaries}

We collect some results on convex bodies and convex functions, and we refer to \cite{Gardner_GT,Hug_Weil_Lectures,RockafellarWets,Schneider_CB} as general references for these topics. More specific results are marked with precise references.

\subsection{Background on Convex Bodies}
\label{se:convex_bodies}
For basic notation, recall that the set of convex bodies, $\Kn$, the area measures $S_j$, and the mixed area measure $S$ were already defined in Section~\ref{se:introduction}. The \emph{mixed volume} $V\colon (\Kn)^n\to\R$ is the unique map such that
$$
V_n(\lambda_1 K_1 + \cdots + \lambda_m K_m) = \sum_{i_1,\ldots,i_n=1}^m \lambda_{i_1}\cdots \lambda_{i_n} V(K_{i_1},\ldots,K_{i_n})
$$
for $K_1,\ldots,K_m\in\Kn$, $\lambda_1,\ldots,\lambda_m\geq 0$, and $m\in\N$, where we assume in addition that $V$ is symmetric in its entries. Here, $V_n(K)$ denotes the volume of $K\in\Kn$, i.e., the $n$-dimensional Lebesgue measure, and $V(K,\ldots,K)=V_n(K)$. A special family of mixed volumes is given by the \emph{intrinsic volumes}
\begin{equation}
\label{eq:intrinsic_mixed}
V_j(K) = \binom{n}{j}\frac{1}{\kappa_{n-j}} V(K[j],B^n[n-j])
\end{equation}
with $0\leq j\leq n$.

\medskip

Every convex body $K\in\Kn$ is uniquely determined by its \emph{support function} $h_K(x)=\sup_{y\in K} \langle x,y\rangle$ with $x\in\Rn$, where $\langle \cdot\,,\cdot\rangle$ denotes the usual scalar product of $x,y\in\Rn$. Observe that the support function is linear with respect to (non-negative) Minkowski combinations, that is,
$$h_{\lambda K + \mu L} = \lambda h_K + \mu h_L$$
for $K,L\in\Kn$ and $\lambda,\mu\geq 0$. Mixed volumes and mixed area measures are connected via the well-known relation
$$V(K_1,\ldots,K_n)=\frac 1n \int_{\sn} h_{K_1}(z) \d S(K_2,\ldots,K_n,z)$$
for $K_1,\ldots,K_n\in\Kn$. Since the left side is symmetric in its entries, so is the right side. It is now easy to see that the mixed volume and the mixed area measure are also linear with respect to Minkowski addition in each of their entries. Moreover, for given $K_2,\ldots,K_n\in\Kn$, the mixed area measure $S(K_2,\ldots,K_n,\cdot)$ is uniquely determined by the values of all mixed volumes $V(K_1,\ldots,K_n)$ with $K_1\in \Kn$. 

\medskip

Occasionally, we need to deal with continuous functionals on $\Kn$. The space of convex bodies is equipped with the topology induced by the \emph{Hausdorff metric}. This means that a sequence of bodies $K_i\in\Kn$, $i\in\N$, converges to a body $K\in\Kn$ if and only if
$$\sup\left\{ \left| h_{K_i}(z)-h_K(z) \right|:z\in\sn\right\}\to 0$$
as $i\to\infty$.

\medskip

We write $[o,x]$ for the line segment that connects the origin $o$ with $x\in\Rn$. For the following result, see, for example, \cite[Proposition 2.15]{Hug_Reichert_support}.

\begin{lemma}
\label{le:mx_area_meas_proj}
If $e\in\sn$ and $E=e^\perp$, then
$$(n-1)S(K_1,\ldots,K_{n-2},[o,e],B)=S_E(\proj_E K_1,\ldots,\proj_E K_{n-2}, B\cap E),$$
for $K_1,\ldots,K_{n-2}\in\Kn$ and Borel sets $B\subseteq \sn$. 
In particular, if $c\in\R$, then 
$$S(K_1,\ldots,K_{n-2},[o,c \cdot e_n],\cdot)=0$$
on $\s^{n-1}_- =\{\nu\in\sn : \langle \nu,e_{n}\rangle <0\}$.
\end{lemma}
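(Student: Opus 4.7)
The identity reduces, via the duality between mixed area measures and mixed volumes, to a volume computation in one fewer dimension. For $K\in\Kn$ and $\lambda\geq 0$, Cavalieri's principle (slicing by lines parallel to $e$) yields
$$V_n(K+\lambda[o,e])=V_n(K)+\lambda V_{n-1}(\proj_E K).$$
Comparing with the expansion of the left-hand side as a polynomial in $\lambda$ in terms of mixed volumes, the fact that this polynomial has degree at most one forces $V(K[n-k],[o,e][k])=0$ for $k\geq 2$ and $n\,V(K[n-1],[o,e])=V_{n-1}(\proj_E K)$. Since $\proj_E$ is Minkowski-linear and both sides are degree-$(n-1)$ in $K$, polarization yields
$$n\,V(K_0,K_1,\ldots,K_{n-2},[o,e])=V_E(\proj_E K_0,\proj_E K_1,\ldots,\proj_E K_{n-2})$$
for all $K_0,K_1,\ldots,K_{n-2}\in\Kn$, where $V_E$ denotes the mixed volume within the ambient space $E\cong\R^{n-1}$.

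Next I would express both sides through the integral representation of mixed volumes via mixed area measures, both on $\Rn$ and on $E$. Using that $h_{\proj_E K_0}(z)=h_{K_0}(z)$ for every $z\in E$, the displayed identity rewrites as
$$(n-1)\int_{\sn} h_{K_0}(z)\d S(K_1,\ldots,K_{n-2},[o,e],z) = \int_{\sn} h_{K_0}(z)\d\mu(z)$$
for every $K_0\in\Kn$, where $\mu$ is the finite Borel measure on $\sn$ concentrated on $E\cap\sn$ and defined by $\mu(B)=S_E(\proj_E K_1,\ldots,\proj_E K_{n-2}, B\cap E)$ for Borel sets $B\subseteq\sn$. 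Since the linear span of support functions is uniformly dense in $C(\sn)$, this equality of integrals for all $K_0\in\Kn$ forces equality of the two finite Borel measures on $\sn$, which is precisely the first claim of the lemma.

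For the ``in particular'' statement, I would use translation invariance of $S$ in each entry to reduce to $c\geq 0$ (since for $c<0$, the segment $[o,c\cdot e_n]$ is a translate of $[o,|c|\cdot e_n]$) and $1$-homogeneity in each entry to reduce to $c=1$ (the case $c=0$ being trivial as the segment degenerates to a point). The first part of the lemma, applied with $e=e_n$ and $E=L$, then shows that $S(K_1,\ldots,K_{n-2},[o,e_n],\cdot)$ is concentrated on $L\cap\sn$, which is disjoint from $\s^{n-1}_-$. The only non-routine ingredient in the argument is the passage from integrated support functions to the underlying measures, which is standard once one invokes the density of differences of support functions in $C(\sn)$; the main bookkeeping point is tracking the factor $n-1$ produced by the different normalization of the mixed volume--mixed area measure identity on $\Rn$ versus on $E$.
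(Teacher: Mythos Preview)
Your argument is correct. The paper does not give its own proof of this lemma; it simply cites \cite[Proposition 2.15]{Hug_Reichert_support} and uses the statement as input. Your route---Cavalieri for $V_n(K+\lambda[o,e])$, polarization to obtain the mixed-volume identity $nV(K_0,\ldots,K_{n-2},[o,e])=V_E(\proj_E K_0,\ldots,\proj_E K_{n-2})$, the integral representation of mixed volumes via mixed area measures in the two ambient spaces, and the density of differences of support functions in $C(\sn)$---is the standard one and matches how this fact is usually established in the literature (see, e.g., \cite[(5.68) and Theorem 5.3.1]{Schneider_CB}). The bookkeeping of the factor $n-1$ is right, and your reduction for the ``in particular'' clause via translation invariance and $1$-homogeneity is clean.
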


For the next result, we refer to \cite[Section 5.1]{Schneider_CB}.

\begin{lemma}
\label{le:properties_int_s}
Let $0\leq j \leq n-1$ and $K_1,\ldots,K_{n-1-j}\in\Kn$. If $f\in C(\s^{n-1})$, then
$$K\mapsto \int_{\s^{n-1}} f(z) \d S(K_1,\ldots,K_{n-1-j},K[j],z)$$
defines a continuous map on $\Kn$.
\end{lemma}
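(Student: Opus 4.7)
The plan is to reduce the claim to the weak continuity of $S(K_1,\ldots,K_{n-1-j},K[j],\cdot)$ as a function of $K$: once one shows that $K_i\to K$ in the Hausdorff metric implies
\begin{equation*}
\int_{\sn}f(z)\d S(K_1,\ldots,K_{n-1-j},K_i[j],z)\longrightarrow \int_{\sn}f(z)\d S(K_1,\ldots,K_{n-1-j},K[j],z)
\end{equation*}
for every $f\in C(\sn)$, the asserted continuity is immediate since $\Kn$ is metrizable.

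First I would verify this convergence when $f=h_L$ is the support function of some $L\in\Kn$. The identity relating mixed volumes to mixed area measures recalled in Section~\ref{se:convex_bodies} yields
\begin{equation*}
\int_{\sn}h_L(z)\d S(K_1,\ldots,K_{n-1-j},K_i[j],z)=n\,V(L,K_1,\ldots,K_{n-1-j},K_i[j]),
\end{equation*}
so the task reduces to the joint continuity of mixed volumes on $(\Kn)^n$. The latter is a direct consequence of the polynomial identity defining $V$ together with the continuity of $V_n$ under Hausdorff convergence (extracting mixed volumes as coefficients of a polynomial whose values depend continuously on the $K_i$). Linearity then extends the convergence to every $f$ in the linear span $\mathcal{H}$ of support functions.

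To upgrade from $\mathcal{H}$ to all of $C(\sn)$, two ingredients are needed: density of $\mathcal{H}$ in $(C(\sn),\|\cdot\|_\infty)$, and uniform boundedness of the total masses $S(K_1,\ldots,K_{n-1-j},K_i[j],\sn)$. The density is classical—given $f\in C(\sn)$, extend it to a continuous $1$-homogeneous function on $\Rn$, smooth it to a $1$-homogeneous function $g$ of class $C^2$ on $\Rn\setminus\{o\}$, and observe that $g+c\,h_{B^n}$ has positive semidefinite Hessian for all sufficiently large $c>0$ and is therefore a support function, so the decomposition $g=(g+c\,h_{B^n})-c\,h_{B^n}$ places $g$ in $\mathcal{H}$. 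The uniform boundedness follows from the preceding paragraph applied to $f=h_{B^n}\equiv 1$ on $\sn$, since any convergent real sequence is bounded. A standard $\varepsilon/3$ triangle-inequality argument, approximating $f$ uniformly by an element of $\mathcal{H}$, then concludes the proof.

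The main obstacle in executing this plan is the density statement for $\mathcal{H}$; once this approximation theorem is in hand, the rest is a routine manipulation of the polynomial expansion of mixed volumes combined with a standard weak-convergence argument.
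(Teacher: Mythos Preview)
The paper does not supply its own proof of this lemma; it simply cites \cite[Section 5.1]{Schneider_CB}. Your argument is correct and is in fact the standard one found there: weak continuity of $K\mapsto S(K_1,\ldots,K_{n-1-j},K[j],\cdot)$ follows from continuity of mixed volumes (which handles integrands that are support functions) together with the density of differences of support functions in $C(\sn)$ and a uniform bound on the total masses. One minor remark: in your density sketch, smoothing a $1$-homogeneous extension by ordinary convolution on $\Rn$ does not preserve $1$-homogeneity, so you should instead approximate $f$ by $C^2$ functions on $\sn$ (e.g., via spherical convolution) and then extend those; after that, the observation that any $C^2$ function on $\sn$ extends to a difference of two support functions goes through exactly as you describe.
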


We close this subsection with the following result due to Hadwiger \cite{Hadwiger}, where we say that $\psi\colon\Kn\to\R$ is \emph{Minkowski additive} if
$$\psi(K+L)=\psi(K)+\psi(L)$$
for $K,L\in\Kn$. For the version that is presented here, we refer to \cite[Theorem 3.11]{Hug_Weil_Lectures} or  \cite[Theorem 3.3.2]{Schneider_CB}. In the following, we write $\SO(n)$ for the group of proper (orientation preserving) rotations of $\R^n$. 
\begin{lemma}
\label{le:char_mean_width}
Let $n\geq 2$ and $\psi\colon \Kn\to\R$. If $\psi$ is Minkowski additive, invariant under proper rotations, and continuous at $B^n$, then $\psi$ is a constant multiple of $V_1$.
\end{lemma}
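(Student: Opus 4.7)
The plan is to force $\psi$ to coincide with a positive multiple of the mean width, which by \eqref{eq:intrinsic_mixed} is a positive multiple of $V_1$. Pure Minkowski additivity, applied first to $\{o\}+\{o\}=\{o\}$ and then iterated on integer multiples, gives $\psi(\{o\})=0$ and $\psi(\lambda K)=\lambda\psi(K)$ for every rational $\lambda\ge 0$ and every $K\in\Kn$. In particular, for rotations $\rho_1,\ldots,\rho_N\in\SO(n)$, rotation invariance combined with rational homogeneity yields $\psi\bigl(\tfrac1N\sum_{i=1}^N\rho_i K\bigr)=\psi(K)$.

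Before using this, I would determine $\psi$ on Euclidean balls. The function $f(r):=\psi(rB^n)$, $r\ge 0$, satisfies the Cauchy equation $f(r+s)=f(r)+f(s)$ because $rB^n+sB^n=(r+s)B^n$, while continuity of $\psi$ at $B^n$ translates to continuity of $f$ at $r=1$. The classical fact that an additive real-valued function continuous at a single point is linear then forces $f(r)=r\psi(B^n)$ for every $r\ge 0$.

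The heart of the argument is a rotation-averaging approximation. I would pick $N$-tuples of rotations $\rho_{1,N},\ldots,\rho_{N,N}$ that become equidistributed in $\SO(n)$, so that the support functions of $\bar K_N:=\tfrac1N\sum_{i=1}^N\rho_{i,N}K$ converge uniformly on $\sn$ to the constant $b(K):=\int_{\sn}h_K(u)\,d\sigma(u)$, where $\sigma$ denotes the normalized Haar measure on $\sn$. Equivalently, $\bar K_N\to b(K)B^n$ in the Hausdorff metric, and this $b(K)$ is a positive multiple of $V_1(K)$. Assume first that $b(K)<1$, and set $r:=1-b(K)>0$. Rotation invariance of $B^n$ gives $\bar K_N+rB^n=\tfrac1N\sum_{i=1}^N\rho_{i,N}(K+rB^n)$, which converges to $B^n$. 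Applying $\psi$, Minkowski additivity together with the previous two steps yields
$$\psi(\bar K_N+rB^n)=\psi(\bar K_N)+\psi(rB^n)=\psi(K)+(1-b(K))\psi(B^n),$$
whereas continuity at $B^n$ forces the left side to tend to $\psi(B^n)$. Solving gives $\psi(K)=b(K)\psi(B^n)$, and the restriction $b(K)<1$ is removed by passing from $K$ to $\lambda K$ for a rational $\lambda>0$ small enough and invoking rational homogeneity on both sides.

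The principal obstacle is the assumption of continuity only at $B^n$: the approximating sequence must be engineered to converge exactly to this single point of continuity, which is why the auxiliary shift by $(1-b(K))B^n$ is introduced in the third step. This in turn makes the Cauchy-equation step on the ball family indispensable, since it provides the only route to evaluate $\psi(rB^n)$ at irrational radii $r$ that arise as $1-b(K)$.
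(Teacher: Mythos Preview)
Your argument is correct. Note, however, that the paper does not actually prove Lemma~\ref{le:char_mean_width}: it is stated as a classical result due to Hadwiger, with references to \cite[Theorem~3.11]{Hug_Weil_Lectures} and \cite[Theorem~3.3.2]{Schneider_CB}. What you have written is essentially the proof found in those references: deduce rational homogeneity from Minkowski additivity, pin down $\psi$ on balls via Cauchy's equation plus continuity at a single point, and then use a rotation-averaging Minkowski combination (shifted by an appropriate multiple of $B^n$) to steer an arbitrary $K$ to the single point of continuity. The treatment of the restriction $b(K)<1$ via rational rescaling is also standard. So your proposal matches the classical approach that the paper merely cites.
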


\subsection{Background on Convex Functions}
\label{se:convex_functions}
Let $\fconv$ denote the space of all lower semicontinuous (l.s.c.), proper, convex functions $w\colon \Rn\to(-\infty,\infty]$, where we say that $w$ is \emph{proper} if there exists $x\in\Rn$ such that $w(x)<\infty$. For such a function $w$, we write
$$\partial w(x)=\{y\in\Rn : w(z)\geq w(x)+\langle y,z-x\rangle \; \forall z\in\Rn\}$$
for the \emph{subdifferential} of $w$ at $x\in\Rn$. Each of its elements is a \emph{subgradient} of $w$ at $x$, and if $w$ is differentiable at $x$, then $\partial w(x)$ only contains the usual gradient of $w$ at $x$, which we denote by $\nabla w(x)$.

As the next result shows, the subdifferential of the pointwise sum of functions is the Minkowski sum of the respective subdifferentials (see, for example, \cite[Corollary 10.9]{RockafellarWets}). Recall that $\fconvf$ contains those elements of $\fconv$ that only take values in $\R$.

\begin{lemma}
\label{le:sum_subdiff}
If $v_1,\ldots,v_m\in\fconvf$, then
$$\partial(v_1+\cdots+v_m)(x)=\partial v_1(x) + \cdots + \partial v_m(x)$$
for $x\in\Rn$.
\end{lemma}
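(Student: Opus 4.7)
The inclusion $\partial v_1(x) + \cdots + \partial v_m(x) \subseteq \partial(v_1 + \cdots + v_m)(x)$ is the easy direction: if $y_i \in \partial v_i(x)$ for each $i$, then summing the subgradient inequalities $v_i(z) \geq v_i(x) + \langle y_i, z - x\rangle$ yields that $y_1 + \cdots + y_m \in \partial(v_1 + \cdots + v_m)(x)$. By a straightforward induction on $m$, the reverse inclusion reduces to the case $m = 2$, which is the classical Moreau--Rockafellar sum rule; my plan is to prove it here under the especially favorable hypothesis that both functions are finite-valued on all of $\Rn$.

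For $m = 2$, I would fix $x \in \Rn$ and $y \in \partial(v_1 + v_2)(x)$, and introduce the shifted convex functions
$$f_1(z) = v_1(z) - v_1(x) - \langle y, z - x\rangle, \qquad f_2(z) = v_2(z) - v_2(x),$$
which satisfy $f_1(x) = f_2(x) = 0$ and, by the hypothesis on $y$, $f_1 + f_2 \geq 0$ on $\Rn$. The next step is a Hahn--Banach separation in $\Rn \times \R$ between the convex sets
$$A = \{(z, \alpha) : \alpha > f_1(z)\} \qquad \text{and} \qquad B = \{(z, \alpha) : \alpha \leq -f_2(z)\},$$
which are disjoint thanks to $f_1 + f_2 \geq 0$. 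The separating hyperplane cannot be vertical, since the projection of $A$ onto $\Rn$ is all of $\Rn$, so it must take the form $\alpha = \langle w, z - x\rangle$ for some $w \in \Rn$, with the additive constant pinned down by the shared boundary point $(x, 0)$. Unwinding the resulting inequalities $f_1(z) \geq \langle w, z - x\rangle$ and $-f_2(z) \leq \langle w, z - x\rangle$ gives $y + w \in \partial v_1(x)$ and $-w \in \partial v_2(x)$, and these two subgradients sum to $y$.

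The main technical hurdle is to guarantee that this separation is non-degenerate, i.e., that $A$ is open and nonempty. Both properties follow from the fact that $v_1, v_2 \in \fconvf$ are finite everywhere on $\Rn$ and hence continuous; this makes $f_1$ continuous and identifies $A$ with the interior of the epigraph of $f_1$, which is open and contains $(x, 1)$. This is precisely the point where the hypothesis $v_i \in \fconvf$ is essential: in the broader class $\fconv$ of l.s.c.\ proper convex functions, one would need an extra constraint qualification (some overlap of the relative interiors of the effective domains) before the sum rule kicks in.
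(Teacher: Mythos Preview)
Your argument is correct. The paper, however, does not prove this lemma at all: it simply records the result with a reference to \cite[Corollary 10.9]{RockafellarWets}. What you have written is a self-contained proof of the Moreau--Rockafellar sum rule in the special case where both functions are finite-valued, carried out via the standard Hahn--Banach separation of the open strict epigraph of $f_1$ from the hypograph of $-f_2$; the finiteness hypothesis is exactly what makes the constraint qualification automatic, as you note. So rather than a different route, you are supplying the details that the paper outsources to the citation.
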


\medskip

For $v\in\fconvf$, the \emph{Monge--Amp\`ere measure} associated with $v$ is given by
\begin{equation}
\label{eq:def_MA}
\MA(v;B)=V_n\left(\bigcup_{b\in B} \partial v(b) \right)
\end{equation}
for Borel sets $B\subseteq\Rn$. This measure satisfies \eqref{eq:MA_det_hess} (see also \cite{Figalli_MA}) and gives rise to the mixed Monge--Amp\`ere measure, which we defined in \eqref{eq:def_mixed_ma}. It is straightforward to see from \eqref{eq:def_mixed_ma} that for $m\in\N$, $v_1,\ldots,v_m\in\fconvf$, and $\lambda_1,\ldots,\lambda_m\geq 0$ the relation
\begin{multline}
\label{eq:mixed_ma_multinom}
\MA(\lambda_1 v_1+\cdots + \lambda_m v_m;\cdot)\\
= \sum_{r_1,\ldots,r_m=0}^n \binom{n}{r_1,\ldots, r_m} \lambda_1^{r_1} \cdots \lambda_m^{r_m} \MA(v_1[r_1],\ldots,v_m[r_m];\cdot)
\end{multline}
holds. Here, we use the multinomial coefficient
$$\binom{n}{r_1,\ldots, r_m} = \begin{cases}\frac{n!}{r_1! \cdots r_m!} \quad &\text{if } \sum_{j=1}^m r_j=n \text{ and } r_j\in\{0,1,\ldots,n\},\\
0 \quad &\text{else}.\end{cases}$$
Equivalently, we can represent the mixed Monge--Amp\`ere measure of\linebreak$v_1,\ldots,v_n\in\fconvf$ with the polarization formula
\begin{equation}
\label{eq:polarization_formula}
\MA(v_1,\ldots,v_n;\cdot)=\frac{1}{n!}\sum_{k=1}^n \sum_{1\leq i_1 < \cdots < i_k \leq n} (-1)^{n-k} \MA(v_{i_1}+\cdots + v_{i_k};\cdot).
\end{equation}
In each of its entries, this measure is additive and positively homogeneous of degree 1, that is,
\begin{equation}
\label{eq:ma_linear}
\MA(\lambda v + \mu w, v_2,\ldots,v_n;\cdot) = \lambda \MA(v,v_2,\ldots,v_n;\cdot) + \mu \MA(w,v_2,\ldots,v_n;\cdot)
\end{equation}
for $v,w,v_2,\ldots,v_n\in\fconvf$ and $\lambda,\mu\geq 0$. See, for example, \cite[Theorem 4.3 (f)]{Colesanti-Ludwig-Mussnig-7}.

If $q(x)=|x|^2/2$ and $v\in \fconvf\cap C^2(\Rn)$, then for every $0\leq j\leq n$ we have
\begin{equation}
\label{eq:connection_MA_Hessian}
\binom{n}{j}\d\MA(v[j],q[n-j];x)=[\Hess v(x)]_j \d x.
\end{equation}
Here, $[\Hess v(x)]_j$ denotes the $j$th elementary symmetric function of the eigenvalues of the Hessian matrix of $v$ at $x$, where we use the convention $[\Hess v(x)]_0\equiv 1$. The right side of \eqref{eq:connection_MA_Hessian} is a \emph{Hessian measure} of $v$, and \eqref{eq:connection_MA_Hessian} shows that it naturally extends to all of $\fconvf$. See also \cite{Trudinger_Wang_Hessian_I,Trudinger_Wang_Hessian_II}. For more details on mixed Monge--Amp\`ere measures and their conjugate counterparts (see below), we refer to \cite{Colesanti-Ludwig-Mussnig-7}.

\medskip

On $\fconv$ we consider the \emph{Legendre--Fenchel transform} or \emph{convex conjugate}, which for $w\in\fconv$ is given by
$$w^*(x)=\sup\left\{ \langle x,y\rangle - w(y):  y\in\Rn \right\} $$
for $x\in\Rn$. Let us emphasize that convex conjugation is an order-reversing involution on $\fconv$ (in fact, it is essentially the only one \cite{Artstein_Milman_Legendre}), which in particular means that $(w^*)^*=w$ for every $w\in\fconv$. Under this transform, the space $\fconvf$ is dual to the space of lower semicontinuous, super-coercive convex functions,
$$\fconvs=\Bigg\{u\colon \Rn\to(-\infty,\infty] : u \text{ is l.s.c., proper, convex, and } \lim_{|x| \to \infty} \frac{u(x)}{|x|}=\infty \Bigg\}.$$
This means that $u\in\fconvs$ if and only if $u^*\in \fconvf$ (see, for example, \cite[Theorem 11.8]{RockafellarWets}). Observe that for differentiable $u\in \fconvs$ we have
\begin{equation*}
\lim\nolimits_{|x|\to\infty} |\nabla u(x)| = \infty.
\end{equation*}
 We naturally embed the space of convex bodies into $\fconvs$ by associating to each $K\in\Kn$ its \emph{convex indicator function}
$$\ind_K(x)=\begin{cases}0\quad&\text{if } x\in K,\\ \infty\quad&\text{else,} \end{cases}$$
which is the convex conjugate of the support function $h_K$.

\medskip

The Monge--Amp\`ere measure has a natural counterpart on $\fconvs$. The \emph{conjugate Monge--Amp\`ere measure} is defined as 
\begin{equation}
\label{eq:map_ma}
\MAp(u;\cdot)=\MA(u^*;\cdot)
\end{equation}
for $u\in\fconvs$ (and hence $u^*\in \fconvf$). It admits a straightforward representation in the form of
\begin{equation}
\label{eq:def_map}
\MAp(u;B)=\int_{\dom(u)} \chi_B(\nabla u(x)) \d x
\end{equation}
for Borel sets $B\subseteq \Rn$. Here, $\chi_B$ denotes the usual \emph{characteristic function} of $B$ and $\dom(u)=\{x\in\Rn : u(x) < \infty\}$ is the \emph{domain of $u$}. At this point, it is worth pointing out that a convex function is differentiable almost everywhere (w.r.t.~the $n$-dimensional Lebesgue measure) on the interior of its domain, and thus \eqref{eq:def_map} is well-defined. Let us also remark that \eqref{eq:def_map} shows that the measure $\MAp(u;\cdot)$ is the push-forward of the Lebesgue measure under the gradient of $u$.

The \emph{conjugate mixed Monge--Amp\`ere measure} of $u_1,\ldots,u_n\in\fconvs$ is defined as
\begin{equation}
\label{eq:def_mixed_map}
\MAp(u_1,\ldots,u_n;\cdot)=\MA(u_1^*,\ldots,u_n^*;\cdot).
\end{equation}
By standard properties of the Legendre--Fenchel transform, we have
\begin{equation}
\label{eq:cma_poly_exp}
\MAp\big((\lambda_1\sq u_1)\infconv \cdots \infconv (\lambda_m\sq u_m);\cdot\big)=\sum_{i_1,\ldots,i_n=1}^m \lambda_{i_1}\cdots \lambda_{i_n} \MAp(u_{i_1},\ldots,u_{i_n};\cdot)
\end{equation}
for $u_1,\ldots,u_m\in\fconvs$, $\lambda_1,\ldots,\lambda_m\geq 0$, and $m\in\N$. Here
$$(u_1\infconv u_2) (x)=\inf\left\{u_1(x-y)+u_2(y):y\in\Rn \right\}$$
for $x\in\Rn$, is the \emph{infimal convolution} or \emph{epi-sum} of $u_1$ and $u_2$. The name epi-sum is justified by the fact that
\begin{equation}
\label{eq:epi_sum}
\epi(u_1\infconv u_2)=\epi(u_1) + \epi(u_2)
\end{equation}
for $u_1,u_2\in\fconvs$, where
$$\epi w=\{(x,t)\in\Rn\times\R : w(x)\leq t\}$$
is the \emph{epi-graph} of $w\in\fconv$. Furthermore,
$$(\lambda \sq u)(x)=\lambda\,u\left(\tfrac{x}{\lambda}\right)$$
for $x\in\Rn$, denotes the \emph{epi-multiplication} of $u$ with $\lambda > 0$, with the additional convention that
$$(0\sq u)(x) = \ind_{\{o\}}(x).$$
Similar to \eqref{eq:epi_sum} the relation
$$\epi(\lambda \sq u) = \lambda \epi(u)$$
holds for $\lambda> 0$. For $\lambda=0$, we have $\epi(0\sq u)=\epi(\ind_{\{o\}})=\{(o,t)\in\R^n\times\R:t\ge 0\}$. In addition, let us note that
\begin{equation}
\label{eq:sum_conj}
(\lambda v + \mu w)^* = (\lambda \sq v^*) \infconv (\mu \sq w^*)
\end{equation}
for every $v,w\in\fconvf$ and $\lambda,\mu\geq 0$.

\medskip

For a convex function $u\in\fconvs$ and $E\in\Grass{j}{n}$ with $1\leq j \leq n$, we write
$$\proj_E u(x_E) = \min\nolimits_{y\in E^\perp} u(x_E +y)$$
with $x_E\in E$, for the \emph{projection function} of $u$, which can also be described by
\begin{equation}
\label{eq:proj_epi}
\epi \proj_E u = \proj_{E\times \lin\{e_{n+1}\}} \epi u.
\end{equation}
The following result, which can be found in \cite[Theorem 11.23]{RockafellarWets}, shows that taking convex conjugates of projections of super-coercive convex functions corresponds to restricting their conjugates to linear subspaces.

\begin{lemma}
\label{le:proj_conj_restr}
Let $1\leq j\leq n$ and $E\in\Grass{j}{n}$. For every $u\in\fconvs$ the equality
$$(\proj_E u)^*(x_E)=(u^*)\vert_E(x_E),\quad x_E\in E,$$
holds, where on the left side, convex conjugation is considered with respect to the ambient space $E$.
\end{lemma}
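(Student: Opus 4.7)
The plan is to prove the identity by a direct calculation from the definitions of the Legendre--Fenchel transform and the projection function. For $x_E \in E$, I would start by unfolding the left-hand side as
\begin{equation*}
(\proj_E u)^*(x_E) = \sup_{y_E \in E}\big\{ \langle x_E, y_E\rangle - \proj_E u(y_E)\big\},
\end{equation*}
with the conjugate taken in the ambient space $E$. Next, I substitute the definition $\proj_E u(y_E) = \min_{z \in E^\perp} u(y_E + z)$ and rewrite the negated infimum as an outer supremum over $z \in E^\perp$, turning the expression into a joint supremum over $(y_E, z) \in E \times E^\perp$ of $\langle x_E, y_E\rangle - u(y_E + z)$.

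The decisive observation is that $\langle x_E, z\rangle = 0$ whenever $x_E \in E$ and $z \in E^\perp$, so $\langle x_E, y_E\rangle = \langle x_E, y_E + z\rangle$. Using the orthogonal decomposition $\Rn = E \oplus E^\perp$, the joint supremum then collapses into a single supremum over $y = y_E + z$ ranging freely over $\Rn$, which yields
\begin{equation*}
(\proj_E u)^*(x_E) = \sup_{y \in \Rn}\{ \langle x_E, y\rangle - u(y)\} = u^*(x_E),
\end{equation*}
and this agrees with $(u^*)\vert_E(x_E)$ because $x_E \in E$.

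The main technical step to justify before carrying out this manipulation is that $\proj_E u$ is a bona fide element of the class of l.s.c., proper, convex, super-coercive functions on $E$, so that its Legendre transform on $E$ is well-defined and the minimum in the definition of $\proj_E u(y_E)$ is actually attained (or equal to $+\infty$). Since $u \in \fconvs$ has all of these properties on $\Rn$, for each fixed $y_E \in E$ the map $z \mapsto u(y_E + z)$ on $E^\perp$ inherits them, where super-coercivity uses the orthogonal estimate $|y_E + z| \ge |z|$; a similar estimate transfers super-coercivity from $u$ to $\proj_E u$ on $E$. I do not foresee any serious obstacle beyond this bookkeeping---the identity itself is essentially the orthogonal decomposition of $\Rn$ combined with the definition of the Legendre transform.
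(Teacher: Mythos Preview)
Your direct computation is correct and is essentially the standard argument. Note, however, that the paper does not supply its own proof of this lemma at all: it simply cites \cite[Theorem 11.23]{RockafellarWets}. So there is nothing to compare against beyond observing that your elementary derivation is exactly the kind of calculation underlying the cited reference.

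One small remark on the ``technical step'' you flag: for the identity itself you do not actually need the minimum in $\proj_E u(y_E)=\min_{z\in E^\perp} u(y_E+z)$ to be attained. The manipulation $-\inf_z u(y_E+z)=\sup_z(-u(y_E+z))$ is valid as soon as the infimum is not $-\infty$, and super-coercivity of $u$ already guarantees that $u$ is bounded below on all of $\Rn$. Attainment of the minimum (and lower semicontinuity of $\proj_E u$) matters only if you want to verify that $\proj_E u$ lies in the corresponding function class on $E$, which is a separate point from the conjugation identity.
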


The next lemma is stated for the epi-sum and epi-multiplication of two functions, but the result obviously extends to finitely many functions.

\begin{lemma}
\label{le:proj_conv}
Let $1\leq j\leq n$ and $E\in\Grass{j}{n}$. If $u_1,u_2\in\fconvs$ and $\lambda_1,\lambda_2\ge 0$, then
$$
\proj_E((\lambda_1\sq u_1)\infconv (\lambda_2\sq u_2))=(\lambda_1\sq \proj_E u_1)\infconv (\lambda_2\sq \proj_E u_2),
$$
where on the right side, epi-sum and epi-multiplication are considered with respect to the ambient space $E$.    
\end{lemma}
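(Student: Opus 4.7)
The plan is to reduce the identity to an elementary statement in conjugate coordinates. The two tools that do all of the work are Lemma~\ref{le:proj_conj_restr}, which transfers the projection of a super-coercive function into the restriction of its conjugate, and \eqref{eq:sum_conj}, which converts epi-sums of epi-multiples into weighted pointwise sums of conjugates (and conversely, because convex conjugation is an involution on $\fconv$). Since the Legendre--Fenchel transform on $E$ is an order-reversing involution on $\fconvs$ restricted to functions on $E$, it suffices to check that the $E$-conjugates of the two sides of the claimed equality agree.

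First I would compute the $E$-conjugate of the left-hand side. By Lemma~\ref{le:proj_conj_restr} applied to $(\lambda_1\sq u_1)\infconv(\lambda_2\sq u_2)\in\fconvs$, this equals the restriction to $E$ of $\bigl((\lambda_1\sq u_1)\infconv(\lambda_2\sq u_2)\bigr)^*$. Applying \eqref{eq:sum_conj} with $v=u_1^*$ and $w=u_2^*$ in $\fconvf$, together with the involution property of $*$, rewrites the latter as $\lambda_1 u_1^*+\lambda_2 u_2^*$. Hence the $E$-conjugate of the left-hand side equals $\lambda_1 u_1^*\vert_E+\lambda_2 u_2^*\vert_E$. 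Next I would compute the $E$-conjugate of the right-hand side. Using \eqref{eq:sum_conj} within the ambient space $E$ (and conjugating), this becomes $\lambda_1(\proj_E u_1)^*+\lambda_2(\proj_E u_2)^*$, and a second application of Lemma~\ref{le:proj_conj_restr} rewrites each $(\proj_E u_i)^*$ as $u_i^*\vert_E$. Both $E$-conjugates thus coincide, so the original functions coincide.

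The only source of friction is the degenerate case $\lambda_i=0$, where the convention $0\sq u=\ind_{\{o\}}$ does not fit the scaling rule $\epi(\lambda\sq u)=\lambda\,\epi(u)$ valid for $\lambda>0$. However, this is precisely the case that is already absorbed into \eqref{eq:sum_conj}: one checks directly that $\ind_{\{o\}}^*\equiv 0$ is the identity for pointwise addition, matching the role of $\ind_{\{o\}}$ as the identity for infimal convolution, and $\proj_E\ind_{\{o\}}=\ind_{\{o_E\}}$ is immediate from the definition of the projection function. Thus every step passes through uniformly for $\lambda_1,\lambda_2\ge 0$, and the ``main obstacle'' is really just careful bookkeeping at the interface of projection and conjugation rather than any hard analytic ingredient. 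The extension to finitely many functions then follows by repeating the same conjugate computation with $m$ terms, or by straightforward induction on $m$ from the two-function case.
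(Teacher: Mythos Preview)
Your proposal is correct and follows essentially the same approach as the paper: compute the $E$-conjugate of each side using Lemma~\ref{le:proj_conj_restr} and \eqref{eq:sum_conj}, verify they coincide, and conclude by the involution property of the Legendre--Fenchel transform. Your additional remark on the degenerate case $\lambda_i=0$ is a nice touch that the paper leaves implicit in the statement of \eqref{eq:sum_conj}.
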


\begin{proof}
    Combining \eqref{eq:sum_conj} and Lemma \ref{le:proj_conj_restr}, we obtain
\begin{align*}
    \left(\proj_E((\lambda_1\sq u_1)\infconv (\lambda_2\sq u_2))\right)^*&=\left((\lambda_1\sq u_1)\infconv (\lambda_2\sq u_2)\right)^*\vert_E\\
    &=\left(\lambda_1u_1^*+\lambda_2u_2^*\right)\vert_E\\
    &=\lambda_1(u_1^*\vert_E)+\lambda_2(u_2^*\vert_E)\\
    &=\lambda_1\left(\proj_E u_1\right)^*+\lambda_2\left(\proj_E u_2\right)^*\\
    &=\left((\lambda_1\sq \proj_E u_1) \infconv (\lambda_2\sq \proj_E u_2)\right)^*,
\end{align*}
where the appropriate ambient space for taking the convolution or the conjugate is clear from the context. Taking conjugates on both sides, we obtain the assertion.   
\end{proof}

When dealing with a subspace $E\in\Grass{j}{n}$, we occasionally need to restrict the measures $\MA$ and $\MAp$ (as well as their mixed versions) to functions defined on $E$, and we denote these restrictions by $\MA_E$ and $\MAp_E$, respectively.

\medskip

Next, let us address some topological properties. We equip the space $\fconv$ with the topology that is associated with \emph{epi-convergence}, where we say that a sequence $w_i\in\fconv$, $i\in\N$, epi-converges to $w\in\fconv$ if for every $x\in\Rn$,
\begin{itemize}
    \item $w(x)\leq \liminf_{i\to \infty} w_i(x_i)$ for every sequence $x_i\to x$ and
    \item $w(x)=\lim_{i\to\infty} w_i(x_i)$ for some $x_i\to x$.
\end{itemize}
This notion of convergence is induced by a metrizable topology on $\fconv$. See, for example, \cite[Theorem 7.58]{RockafellarWets}. On $\fconvf$, epi-convergence is equivalent to pointwise convergence (see \cite[Theorem 7.17]{RockafellarWets}) and on $\fconvs$, epi-convergence of $u_i$ to $u$ is equivalent to Hausdorff convergence of the level sets
$$\{x\in\Rn : u_i(x)\leq t\} \to \{x\in\Rn : u(x)\leq t\}$$
for every $t\neq \min_{x\in\Rn} u(x)$ (see \cite[Lemma 5]{Colesanti-Ludwig-Mussnig_ValConv}). Note that for $t<\min_{x\in\Rn} u(x)$, the level set $\{x\in\Rn : u(x)\leq t\}$ is empty and we say that a sequence of convex sets $C_i$, $i\in\N$, converges to the empty set if $C_i=\emptyset$ for every $i$ large enough.

Lastly, let us remark that $w\mapsto w^*$ is continuous on $\fconv$ (see \cite[Theorem 11.34]{RockafellarWets}).

\section{Supports of Mixed Area Measures}
\label{se:supports}

\subsection{Kubota-type Formulas}
\label{se:kubota}
We begin this section with a simple but useful consequence of Lemma~\ref{le:char_mean_width}. For $k \in \N$ we denote by $\hm^k$ the $k$-dimensional Hausdorff measure.
\begin{lemma}
\label{le:int_sn_ball}
If $\psi\colon \Kn\to\R$ is Minkowski additive and continuous, then
$$\int_{\sn} \psi([o,e]) \d\hm^{n-1}(e)=\kappa_{n-1} \psi(B^n).$$
\end{lemma}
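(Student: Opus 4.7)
The plan is to apply Hadwiger's characterization, Lemma~\ref{le:char_mean_width}, to a rotation-averaged version of $\psi$. Specifically, I would set
$$\bar\psi(K)=\int_{\SO(n)}\psi(\theta K)\d\theta,\qquad K\in\Kn,$$
where $\d\theta$ denotes the Haar probability measure on $\SO(n)$. Minkowski additivity of $\bar\psi$ would follow from the linearity of the integral, $\SO(n)$-invariance from the right-invariance of the Haar measure, and continuity at $B^n$ from a standard dominated convergence argument: $\psi$ is bounded on Hausdorff-compact subsets of $\Kn$, and $\theta K_i\to \theta B^n=B^n$ for every $\theta\in\SO(n)$ whenever $K_i\to B^n$. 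Lemma~\ref{le:char_mean_width} would then identify $\bar\psi=c\,V_1$ for some $c\in\R$, which I would pin down by evaluating at $B^n$: using $\bar\psi(B^n)=\psi(B^n)$ together with the standard value $V_1(B^n)=n\kappa_n/\kappa_{n-1}$ yields
$$c=\frac{\kappa_{n-1}\psi(B^n)}{n\kappa_n}.$$

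Next, I would exchange the sphere integral with the symmetrization. Since $\theta[o,e]=[o,\theta e]$ and $\hm^{n-1}|_{\sn}$ is $\SO(n)$-invariant, the substitution $e\mapsto\theta^{-1}e$ gives
$$\int_{\sn}\psi([o,e])\d\hm^{n-1}(e)=\int_{\sn}\psi(\theta[o,e])\d\hm^{n-1}(e)$$
for every fixed $\theta\in\SO(n)$. Averaging this identity over $\SO(n)$ and applying Fubini (legitimate because the integrands are continuous on compact domains) would replace $\psi$ by $\bar\psi=c\,V_1$ under the sphere integral. Since $V_1([o,e])=1$ for every $e\in\sn$ (the intrinsic volume of a unit segment equals its length, as one verifies from $V_1(K)=\kappa_{n-1}^{-1}\int_{\sn}h_K\d\hm^{n-1}$), the remaining integral equals $\hm^{n-1}(\sn)=n\kappa_n$, and the factor $n\kappa_n$ cancels against the denominator in $c$ to leave $\kappa_{n-1}\psi(B^n)$.

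The only genuine technicality I foresee is the continuity of $\bar\psi$ at $B^n$, which boils down to the dominated convergence step outlined above; everything else is routine linearity and invariance bookkeeping together with the explicit computations of $V_1$ on segments and on the unit ball.
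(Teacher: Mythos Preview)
Your proposal is correct and follows essentially the same route as the paper: both define the rotation average $\bar\psi(K)=\int_{\SO(n)}\psi(\vartheta K)\d\vartheta$, invoke Lemma~\ref{le:char_mean_width} to identify it with $cV_1$, determine $c$ from $\bar\psi(B^n)=\psi(B^n)$, and then evaluate on segments. The only cosmetic difference is that the paper replaces the sphere integral directly by $n\kappa_n\,\bar\psi([o,e_1])$ via the push-forward of Haar measure under $\vartheta\mapsto\vartheta e_1$, whereas you reach the same endpoint through a Fubini-plus-substitution step and $V_1([o,e])=1$; also, the paper disposes of the trivial case $n=1$ separately (Lemma~\ref{le:char_mean_width} assumes $n\ge 2$), which you should add.
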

\begin{proof}
Since the statement is trivial for $n=1$, we may assume $n\geq 2$. Observe that the map $\varphi\colon \Kn\to\R$, defined by
$$\varphi(K)= \int_{\SO(n)} \psi(\vartheta K) \d\vartheta, \quad K\in\Kn,$$ satisfies the assumptions of Lemma~\ref{le:char_mean_width}, where the integration is with respect to the Haar probability measure on $\SO(n)$. Thus, there exists $c\in\R$ such that $\varphi = c V_1$. Since $\varphi(B^n)=\psi(B^n)$ and $V_1(B^n)=\frac{n\kappa_n}{\kappa_{n-1}}$, we conclude that $c=\frac{\kappa_{n-1}}{n\kappa_n}\psi(B^n)$. We therefore obtain
$$\int_{\sn} \psi([o,e]) \d\hm^{n-1}(e) = n \kappa_n \int_{\SO(n)} \psi(\vartheta ([o,e_1])) \d \vartheta = n\kappa_n\, \varphi([o,e_1]) = \kappa_{n-1} \psi(B^n),$$
which proves the assertion.
\end{proof}

Recall that $\ell=\operatorname{span}\{e_n\}$ and $L=\ell^\perp\cong\R^{n-1}$. For $1\leq k\leq n$ we consider the Grassmannian subspaces 
$$\Gr(\ell,k)=\{E\in \Grass{k}{n} : \ell \subseteq E\}$$
and
$$\Gr(F,j)=\{E\in \Grass{j}{n} : E \subseteq F\},$$
for $F\in\Gr(n,k)$ and $1\leq j \leq k$.
Since $\Rn=L+\ell$ (which is a direct sum of linear subspaces), we can rewrite
$$\Gr(\ell,k)=\{F + \ell : F\in \Gr(L,k-1)\}.$$
Integration on $\Gr(\ell,k)$ will always be understood with respect to the unique probability measure that is invariant under those rotations that fix $\ell$. Similarly, integration on $\Gr(F,j)$ will always be understood with respect to the Haar probability measure that is invariant under rotations that preserve $F$.

We use standard properties of integration on the Grassmannian to see that
\begin{align}
\label{eq:int_grass_n-1_n_e_n}
\int_{\Gr(\ell,n-1)} f(E) \d E& = \int_{\Gr(L,n-2)} f(\tilde{E}+ \ell) \d \tilde{E}\notag\\
&= \frac{1}{(n-1)\kappa_{n-1}} \int_{\s_L^{n-2}} f(e^\perp)\d\hm^{n-2}(e)
\end{align}
for measurable functions $f\colon\Gr(\ell,n-1)\to [0,\infty)$, where $\s_L^{n-2}=\s^{n-1}\cap L$. Similarly,
\begin{align}
\begin{split}
\label{eq:int_grass_ell_split}
\int_{\Gr(\ell,k)} g(E) \d E &= \int_{\Gr(L,k-1)} g(\tilde{E}+\ell) \d\tilde{E}\\
&= \int_{\Gr(L,k)} \int_{\Grass{k-1}{F}} g(\tilde{E}+\ell) \d\tilde{E}\d F\\
&= \int_{\Gr(\ell,k+1)} \int_{\Gr(\ell,F,k)} g(E)\d E\d F
\end{split}
\end{align}
for $1\leq k \leq n-1$ and measurable functions $g\colon\Gr(\ell,k)\to [0,\infty)$. Here, $\Gr(\ell,F,k)$ denotes the space of $k$-dimensional linear subspaces of $F$ that contain $\ell$, and integration is understood with respect to the probability measure that is invariant under rotations that map both $\ell$ and $F$ onto themselves. See, for example, formula (7.4) in \cite{schneider_weil}.

\begin{proof}[Proof of Theorem~\ref{thm:ck_sam}]
For $j=n-1$, the statement is trivial. Thus, we assume that $1\le j\le n-2$. Let $F\in \Gr(\ell,k+1)$ for some $2\leq k \leq n-1$ (we will have $k\ge j+1$). For the proof, we can assume that $f$ is continuous and remark that the general case follows by standard arguments. 
We use \eqref{eq:int_grass_n-1_n_e_n} with respect to the $(k+1)$-dimensional ambient space $F$ to obtain
\begin{align}
\label{eq:ck_proof_1}
&\int_{\Gr(\ell,F,k)} \int_{\s_E^{k-1}} f (z) \d S_E(\proj_E K_1,\ldots,\proj_E K_{k-1},z)\d E\notag\\
&=\int_{\Gr(\ell,F,k)} \int_{\s_F^k \cap E} f(z)\d S_E(\proj_E \proj_F K_1,\ldots,\proj_E \proj_F K_{k-1},z)\d E\notag\\
&= \frac{1}{k \kappa_k} \int_{\s_F^k\cap L} \int_{\s_F^k \cap e^\perp} f(z)\notag\\
&\qquad\qquad \d S_{F\cap e^\perp }(\proj_{e^\perp} \proj_F K_1,\ldots, \proj_{e^\perp} \proj_F K_{k-1},z)  \d \hm^{k-1}(e)
\end{align}
for arbitrary $K_1,\ldots,K_{k-1}\in \Kn$. By Lemma~\ref{le:mx_area_meas_proj} we have
\begin{align}
\label{eq:ck_proof_2}
&\int_{\s_F^k\cap e^\perp} f(z) \d S_{F\cap e^\perp }(\proj_{e^\perp} \proj_F K_1,\ldots, \proj_{e^\perp} \proj_F K_{k-1},z)\nonumber\\
&=k \int_{\s_F^k} f(z)\d S_F(\proj_F K_1,\ldots, \proj_F K_{k-1},[o,e],z)
\end{align}
for every $e\in \s_F^k\cap L$. Combining \eqref{eq:ck_proof_1} and \eqref{eq:ck_proof_2} together with Lemma~\ref{le:int_sn_ball} (applied in $F\cap L\in \Gr(n,k)$ and using Lemma~\ref{le:properties_int_s}), we obtain
\begin{align*}
&\int_{\Gr(\ell,F,k)} \int_{\s_E^{k-1}} f(z) \d S_E(\proj_E K_1,\ldots,\proj_E K_{k-1},z)\d E\\
&=\frac{1}{\kappa_k} \int_{\s_F^k\cap L} \int_{\s_F^k} f(z)\d S_F(\proj_F K_1,\ldots,\proj_F K_{k-1},[o,e],z)\d\hm^{k-1}(e)\\
&=\frac{\kappa_{k-1}}{\kappa_k} \int_{\s_F^k} f(z)\d S_F(\proj_F K_1,\ldots,\proj_F K_{k-1},\proj_F B_L^{n-1},z).
\end{align*}
We now use \eqref{eq:int_grass_ell_split} and apply the last equality recursively to obtain
\begin{align*}
&\frac{1}{\kappa_j} \int_{\Gr(\ell,j+1) } \int_{\s_E^{j}} f(z) \d S_E(\proj_E K_1,\ldots,\proj_E K_j,z)\d E\\
&=\frac{1}{\kappa_j}\int_{\Gr(\ell,j+2)} \int_{\Gr(\ell,F,j+1)} \int_{\s_E^{j}} f(z) \d S_E(\proj_E K_1,\ldots,\proj_E K_j,z)  \d E \d F\\
&= \frac{1}{\kappa_{j+1}} \int_{\Gr(\ell,j+2)} \int_{\s_F^{j+1}} f(z) \d S_F(\proj_F K_1,\ldots,\proj_F K_j, \proj_F B_L^{n-1},z) \d F\\
&\;\;\vdots\\
&=\frac{1}{\kappa_{n-1}} \int_{\s^{n-1}} f(z) \d S(K_1,\ldots,K_j,B_L^{n-1}[n-1-j],z)
\end{align*}
for every $1\leq j <n-1$ and $K_1,\ldots,K_j\in\Kn$, which completes the proof.
\end{proof}

\begin{remark} 
Observe that the classical Cauchy--Kubota formulas for intrinsic volumes of a convex body $K\in\Kn$ (see, for example, formulas (5.71) and (5.72) in \cite{Schneider_CB}) can be retrieved from Theorem~\ref{thm:ck_sam}. Indeed, if $K$ is embedded into $\R^{n+1}$ via the identification $\R^n=e_{n+1}^\perp$, then $S_n(K,\{e_{n+1}\})=V_n(K)$ and one simply needs to apply Theorem~\ref{thm:ck_sam} with respect to the ambient space $\R^{n+1}$.
\end{remark}

\begin{remark}
An alternative proof of Theorem~\ref{thm:ck_sam} can be based on a special case of \cite[Theorem 8]{HTW11} in combination with a spherical Blaschke--Petkantschin formula \cite[Lemma 5.3]{HT19}. The implications of such an approach will be discussed in future work. 
\end{remark}

\subsection{A First Description of Supports}
\label{se:desc_supp}
Let $K\in\Kn$. For $x \in \partial K$, the boundary of $K$, we denote by $N(K,x)$ the \emph{normal cone} of $K$ at $x$, that is
$$N(K,x)=\{z\in\Rn : \langle x,z \rangle = h_K(z)\}.$$
The elements of $N(K,x)\setminus\{o\}$ are the \emph{outer normal vectors} of $K$ at $x$.
In addition, let
$$F(K,z)=\{x\in K : \langle x,z \rangle = h_K(z)\}$$
denote the \emph{support set} of $K$ with outer normal vector $z\in \Rn\backslash\{o\}$. In particular, the point $z$ is an outer normal vector of $K$ at each point of $F(K,z)$.
For a non-empty convex subset $F$ of $K$, we define $N(K,F)= N(K,x)$, where $x\in\operatorname{relint} F$, which is not dependent on the particular choice of $x$.  We define the \emph{touching cone} of $K$ at $z$, denoted by $T(K,z)$, as the unique face of $N(K,F(K,z))$ that contains $z$ in its relative interior.

For $0\leq r \leq n-1$, we say that $z$ is an \emph{$r$-extreme normal vector of $K$} if $\dim T(K,z)\leq r+1$. This is equivalent to the fact that there do not exist $r+2$ linearly independent normal vectors $z_1,\ldots,z_{r+2}$ at the same boundary point of $K$ such that $z=z_1+\cdots + z_{r+2}$. A $0$-extreme normal vector is simply called \emph{extreme}.

We need the following result, which is due to Schneider \cite[Satz 4]{Schneider_75}. It was previously conjectured by Weil \cite{weil_73}, who gave a proof for the case $j=1$. See also \cite[Theorem 4.5.3]{Schneider_CB}, where this result is proved under the assumption that $K$ has non-empty interior. We are only interested in the case $j=n-1$ and remark that in this case, it is straightforward to lift the restriction on the dimension of $K$, since the description of $S_{n-1}(K,\cdot)$ is trivial when $\dim K<n$.

\begin{theorem}
	\label{thm:supp_s_j}
	Let $K\in\Kn$ and let $0\leq j\leq n-1$. The support of $S_j(K,\cdot)$ is the closure of the set of all $(n-1-j)$-extreme unit normal vectors of $K$.
\end{theorem}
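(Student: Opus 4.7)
The plan is to mimic the approach that the authors outline for their Theorem~\ref{thm:supp_ext}, which they attribute to Schneider. The three ingredients are a classical Kubota-type formula for $S_j(K,\cdot)$, the known $r=0$ case (description of the support of the surface area measure as the closure of the extreme normals), and a dimension count that relates $(n-1-j)$-extremeness in $\Rn$ to extremeness inside $(j+1)$-dimensional projections of $K$.

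First I would establish the classical Kubota formula
\begin{equation*}
\frac{1}{\kappa_{n-1}}\int_{\sn} f(z) \d S_j(K, z) = \frac{1}{\kappa_j}\int_{\Grass{j+1}{n}} \int_{\s_E^j} f(z) \d S_E(\proj_E K[j], z) \d E
\end{equation*}
for measurable $f\colon\sn\to[0,\infty)$. The proof mirrors that of Theorem~\ref{thm:ck_sam}, recursively applying Lemma~\ref{le:mx_area_meas_proj} and Lemma~\ref{le:int_sn_ball}, but now averaging over \emph{all} $(j+1)$-dimensional subspaces rather than only those containing $\ell$; in the ambient $(j+1)$-dimensional space $E$, the measure $S_E(\proj_E K[j],\cdot)$ is simply the surface area measure of $\proj_E K$. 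After this reduction, the base case to quote is the classical fact that for every convex body $L$ in a $(j+1)$-dimensional ambient space $E$, $\supp S_E(L[j], \cdot) = \cl(\ext L)$ inside $\s_E^j$ (this is the $r=0$ statement and can be proved directly from the definitions of extremeness and of the surface area measure).

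The geometric bridge to $(n-1-j)$-extremeness is the identity $T(\proj_E K, z) = T(K,z) \cap E$ for $z \in E$. A general position count then gives that, for $E \in \Grass{j+1}{n}$ in generic position relative to $\lin T(K,z)$ and containing $z$, $\dim(T(K,z)\cap E) = 1$ precisely when $\dim T(K,z) \leq n-j$, i.e., when $z$ is $(n-1-j)$-extreme. With this in hand, both inclusions follow. For $\supp S_j(K,\cdot) \subseteq \cl\{(n-1-j)\text{-extreme vectors}\}$, argue by contrapositive: if a neighborhood $U$ of $z$ contains no $(n-1-j)$-extreme vector of $K$, then $\dim T(K,w) \geq n-j+1$ for every $w \in U$, which forces $\dim(T(K,w)\cap E) \geq 2$ whenever $w \in U \cap E$; hence no such $w$ is extreme in $\proj_E K$, and the Kubota formula together with the base case yields $S_j(K,U) = 0$. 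Conversely, since the support is closed, it suffices to fix an $(n-1-j)$-extreme $z$ and a neighborhood $U$ of $z$, and produce a positive-measure set of $E \in \Grass{j+1}{n}$ for which $U\cap \s_E^j$ meets $\ext(\proj_E K)$; the Kubota formula then forces $S_j(K,U) > 0$.

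The main obstacle is the passage from ``extreme normals of $\proj_E K$'' to their closure, which is the actual support $\supp S_E(\proj_E K[j], \cdot)$. The pointwise dimension count is clean, but the touching cone $T(\proj_E K, w)$ depends jointly on $w$ and $E$, and one must control its (upper semi)continuity so that the arguments above persist for open neighborhoods and for a positive-measure family of subspaces. This is the delicate part of Schneider's treatment and is where the bulk of the technical work would go.
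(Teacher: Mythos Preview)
The paper does not prove this theorem. It is stated as a known result due to Schneider \cite{Schneider_75} (with a reference also to \cite[Theorem 4.5.3]{Schneider_CB}) and is then used as an input in the proof of Theorem~\ref{thm:descrp_supp_s_proj}, which in turn feeds into Theorem~\ref{thm:supp_ext}. So there is no ``paper's own proof'' to compare against; the authors explicitly say they only need the case $j=n-1$, which is elementary.

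Your proposal is essentially a reconstruction of Schneider's original strategy, transported through the template the paper uses for Theorem~\ref{thm:supp_ext}. The outline is sound: the classical Kubota formula for $S_j(K,\cdot)$ reduces the question to the surface area measures of $(j+1)$-dimensional projections, the base case $j=n-1$ is genuinely elementary, and your touching-cone identity $T_E(\proj_E K,z)=T(K,z)\cap E$ is correct (it follows from $N_E(\proj_E K,F(\proj_E K,z))=N(K,F(K,z))\cap E$ together with the fact that intersecting a cone with a linear subspace through a relative-interior point picks out the intersection of the corresponding face). This is the set-level statement underlying the dimension identity in Lemma~\ref{le:dim_T}.

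One warning about the converse inclusion: your phrasing ``produce a positive-measure set of $E\in\Grass{j+1}{n}$ for which $U\cap\s_E^j$ meets $\ext_E(\proj_E K)$'' hides the real difficulty. The set of subspaces $E$ containing a fixed $z$ has Haar measure zero in $\Grass{j+1}{n}$, so the pointwise dimension count at $z$ alone gives nothing; you must control what happens for $w$ near $z$ lying in $E$ that do \emph{not} contain $z$. This is exactly the semicontinuity issue you flag at the end, and it is where Schneider's argument does the real work. Your proposal is an accurate map of the terrain, but be aware that the paper offers no shortcut through this step---it simply imports the finished result.
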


\begin{lemma}
\label{le:int_s_cont_grass}
Let $K\in\Kn$ and let $1\leq j< k \leq n$. If $f\in C(\s^{n-1})$, then
\begin{align*}
    I_{f,K} \colon \Gr(\ell,k) &\to \R\\
    E &\mapsto \int_{\s_E^{k-1}} f(z_E) \d S_E(\proj_E K[j],\proj_E B_L^{n-1}[k-1-j],z_E)
\end{align*}
is continuous on $\Gr(\ell,k)$.
\end{lemma}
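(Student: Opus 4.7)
The plan is to establish continuity at an arbitrary fixed $E_0 \in \Gr(\ell,k)$ by reducing everything to integrals on the single sphere $\s_{E_0}^{k-1}$, exploiting the rotational equivariance of mixed area measures. For any $E \in \Gr(\ell,k)$, I choose a rotation $\vartheta \in \SO(n)$ with $\vartheta(E_0) = E$ that also fixes $\ell$ as a set; such a $\vartheta$ exists, since both $E_0$ and $E$ lie in the orbit of $E_0$ under the stabilizer of $\ell$ in $\SO(n)$. Then $\vartheta$ also fixes $L = \ell^\perp$ as a set, so $\vartheta B_L^{n-1} = B_L^{n-1}$; moreover, $\vartheta^{-1} \circ \proj_E = \proj_{E_0} \circ \vartheta^{-1}$. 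A change of variables $z_E = \vartheta z$, combined with the rotation-equivariance of $S_E$, transforms the defining integral into
$$I_{f,K}(E) = \int_{\s_{E_0}^{k-1}} f(\vartheta z)\, \d S_{E_0}\!\bigl(\proj_{E_0}(\vartheta^{-1} K)[j],\; \proj_{E_0} B_L^{n-1}[k-1-j],\; z\bigr),$$
in which the ambient sphere and the $B_L^{n-1}$-argument are both fixed; only $f \circ \vartheta$ and $\vartheta^{-1} K$ now depend on $E$.

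I will then verify continuity at $E_0$ sequentially. Let $E_i \to E_0$ and, for each $i$, pick some $\vartheta_i \in \SO(n)$ fixing $\ell$ with $\vartheta_i(E_0) = E_i$. By compactness of $\SO(n)$, any subsequence of $(\vartheta_i)$ admits a further convergent subsequence $\vartheta_{i_m} \to \vartheta_\infty$, and $\vartheta_\infty$ necessarily fixes both $\ell$ and $E_0$ as sets. Applying the identity above to $\vartheta_\infty$ (legitimate since it stabilizes $E_0$) yields the analogous representation of $I_{f,K}(E_0)$, putting $I_{f,K}(E_{i_m})$ and $I_{f,K}(E_0)$ into directly comparable form on the same sphere $\s_{E_0}^{k-1}$.

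Along this subsequence, $f \circ \vartheta_{i_m} \to f \circ \vartheta_\infty$ uniformly on $\s_{E_0}^{k-1}$ by uniform continuity of $f$, while $\proj_{E_0}(\vartheta_{i_m}^{-1} K) \to \proj_{E_0}(\vartheta_\infty^{-1} K)$ in the Hausdorff metric (continuity of the $\SO(n)$-action followed by the $1$-Lipschitz projection $\proj_{E_0}$). Splitting $I_{f,K}(E_{i_m}) - I_{f,K}(E_0)$ into a uniform-convergence piece (controlled because the total masses of the measures involved remain uniformly bounded, as follows from Lemma~\ref{le:properties_int_s} applied in $E_0$ to the constant function $1$) and a weak-convergence piece (handled by Lemma~\ref{le:properties_int_s} in $E_0$ for the fixed continuous integrand $f \circ \vartheta_\infty$), both pieces tend to zero. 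A standard subsequence-of-subsequence argument then gives $I_{f,K}(E_i) \to I_{f,K}(E_0)$. The only genuine subtlety is organizing the selection of rotations so that the limit $\vartheta_\infty$ stabilizes $E_0$ and $\ell$; no geometric ingredient beyond Lemma~\ref{le:properties_int_s} and the rotation-equivariance of mixed area measures is needed.
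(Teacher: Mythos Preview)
Your proposal is correct and follows essentially the same strategy as the paper: pull everything back to a fixed sphere $\s_{E_0}^{k-1}$ via rotations that preserve $\ell$ (hence $B_L^{n-1}$), then control the two varying pieces---the rotated integrand and the rotated body---using uniform continuity of $f$ together with Lemma~\ref{le:properties_int_s}. The only organizational difference is that the paper selects the rotations so that $\vartheta_i \to \mathrm{id}$ from the outset (mapping $E_i$ to $\bar E$), which avoids your compactness/subsequence-of-subsequence detour; but the underlying mechanism is identical.
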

\begin{proof}
If $k=n$, there is nothing to show, and we may therefore assume $k<n$. Throughout the following let $\bar E\in\Gr(\ell,k)$ be fixed and let $E_i\in\Gr(\ell,k)$, $i\in\N$, be a sequence converging to $\bar E$. We can find a sequence $\vartheta_i\in\SO(n)$, $i\in\N$, such that $\vartheta_i x\to x$ for every $x\in\Rn$ as $i\to\infty$ and such that $\{\vartheta_i x\colon x\in E_i\}=\bar E$ for every $i\in\N$. We now have
\begin{align}\label{eq:eqrot}
I_{f,K}(E_i)&=\int_{\s_{E_i}^{k-1}} f(z_{E_i}) \d S_{E_i}(\proj_{E_i} K[j],\proj_{E_i} B_L^{n-1}[k-1-j],z_{E_i})\nonumber\\
&=\int_{\s_{\bar E}^{k-1}} (f\circ\vartheta_i^{-1})(z_{\bar E}) \d S_{\bar E}(\proj_{\bar E} (\vartheta_i K)[j], \proj_{\bar E} B_L^{n-1} [k-1-j],z_{\bar E})\nonumber\\
&=I_{f_i,K_i}(\bar E),
\end{align}
where we write $f_i=f\circ \vartheta_i^{-1}$ and $K_i= \vartheta_i K$ for $i\in\N$. Note that $K_i$ converges to $K$ and $\proj_{\bar E} K_i$ converges to $\proj_{\bar E} K$ as $i\to\infty$.

It follows from Lemma~\ref{le:properties_int_s} that
$$S_{\bar E}(\proj_{\bar E} K_i[j],\proj_{\bar E} B_L^{n-1} [k-1-j],\s_{\bar E}^{k-1})$$
is a convergent sequence in $i$ and thus it is bounded by a constant $C>0$ for every $i\in\N$.

Now let $\varepsilon>0$ be arbitrary. Since $\sn$ is compact and $f$ is continuous, there exists $i_1\in\N$ such that
$$
\big|f(z_{\bar E})-f_i(z_{\bar E})\big|\leq \frac{\varepsilon}{2C}
$$
for $z_{\bar E}\in \s_{\bar E}^{k-1}$ and $i\geq i_1$. We now have
\begin{align}
\label{eq:I_est_1}
&\big\vert I_{f,K_i}(\bar E)-I_{f_i,K_i}(\bar E)  \big\vert \nonumber\\
&\leq \int_{\s_{\bar E}^{k-1}} \big\vert f(z_{\bar E})-f_i(z_{\bar E})\big\vert \d S_{\bar E}(\proj_{\bar E} K_i[j],\proj_{\bar E} B_L^{n-1}[k-1-j],z_{\bar E})\nonumber\\
&\leq \frac{\varepsilon}{2}
\end{align}
for $i\geq i_1$. Furthermore, by Lemma~\ref{le:properties_int_s} there exists $i_2\in\N$ such that
\begin{equation}
\label{eq:I_est_2}
\left\vert I_{f,K}(\bar E)-I_{f,K_i}(\bar E) \right\vert \leq \frac{\varepsilon}{2}
\end{equation}
for $i\geq i_2$. Thus, combining \eqref{eq:eqrot}, \eqref{eq:I_est_1} and \eqref{eq:I_est_2} gives
$$\left\vert I_{f,K}(\bar{E})-I_{f,K}(E_i)\right\vert \leq \left\vert I_{f,K}(\bar E)-I_{f,K_i}(\bar E) \right\vert+ \left\vert I_{f,K_i}(\bar E)-I_{f_i,K_i}(\bar E)\right\vert\leq \varepsilon$$
for $i\geq \max\{i_1,i_2\}$, which completes the proof.
\end{proof}

For a linear subspace $E$ of $\Rn$ and a convex body $K\subset E$, we write $\ext_E(K)$ for the set of extreme unit normal vectors of $K$ with respect to the ambient space $E$.

\begin{theorem}
\label{thm:descrp_supp_s_proj}
If $1\leq j \leq n-1$ and $K\in\Kn$, then
\begin{equation}
\label{eq:supprelation}
    \supp \Sdisk_j(K,\cdot) =\cl\left\{z\in \sn:\exists E\in\Gr(\ell,j+1)\text{ \rm such that }z\in \ext_E(\proj_E K)\right\}.
\end{equation}
\end{theorem}
\begin{proof}
     Suppose that $z\in \sn$ is not in the support of $\Sdisk_j(K,\cdot)$. We will show that then, for every $E\in\Gr(\ell,j+1)$ such that $z\in E$, the vector $z$ does not belong to   $\cl \ext_E (\proj_E K)$. Once this is proved, the fact that the support of a measure is closed  implies that 
    \begin{equation*}
    \supp \Sdisk_j(K,\cdot) \supseteq \cl\left\{z\in \sn:\exists E\in\Gr(\ell,j+1)\text{ \rm such that }z\in \ext_E(\proj_E K)\right\}.
    \end{equation*}
    If $z \notin \supp \Sdisk_j(K,\cdot)$, then there exist  
    an open (spherical) neighborhood $A$ of $z$ and a non-negative function $f \in C(\sn)$ such that $f(z)>0$ on $A$ and
    $$
    \int_{\sn} f(w) \d \Sdisk_j(K,w)=0.
    $$
Theorem \ref{thm:ck_sam} implies that
    $$
    \int_{\Gr(\ell,j+1)} \int_{\s_E^j} f(w_E) \d S_E(\proj_E K[j], w_E)\d E =0.
    $$
    By Lemma \ref{le:int_s_cont_grass} and since $f$ is continuous and non-negative, this is true if and only if 
    $$
    \int_{\s_E^j} f(w_E) \d S_E(\proj_E K[j], w_E) =0
    $$
    for every $E \in \Gr(\ell,j+1)$. In particular, this holds for every $E\in \Gr(\ell,j+1)$ such that $z \in E$. Since $f\vert_{\s_E^j}$ is continuous and non-negative, it follows that
    $$
     \{ f\vert_{\s_E^j}>0\} \cap \supp S_E(\proj_E K[j], \cdot)= \emptyset,
    $$
    which in turn implies that $z \notin \supp S_E(\proj_E K[j], \cdot)$, since $f\vert_{\s_E^j}$ is positive in a neighborhood of $z\in E$ relative to $E$. The required assertion is now implied by Theorem~\ref{thm:supp_s_j}, applied to $\proj_E K$ in the ambient space $E$. 

For the reverse inclusion, first note that 
\begin{align}\label{eq:refThm3.8new}
    &\cl\left\{z\in \sn:\exists  E\in\Gr(\ell,j+1)\text{ \rm such that }z\in \ext_E(\proj_E K)\right\}\nonumber\\
    &= \cl \left(\bigcup_{E \in \Gr(\ell,j+1)} \supp S_E(\proj_E K[j],\cdot)\right).\
\end{align}
Suppose that $z\in\sn$ does not belong to this set. In this case, there is some $\tau>0$ such that the closed geodesic ball $B_\tau(z)\subset \sn$ centered at $z$ with radius $\tau$ is disjoint from the set. Let $U_\tau(z)$ denote the relative interior of $B_\tau(z)$. Theorem \ref{thm:ck_sam} yields
\begin{equation*}
\int_{\sn}\chi_{U_\tau(z)}(w)\d \Sdisk_j(K,w)= \int_{\Gr(\ell,j+1)} \int_{\s_E^j} \chi_{U_\tau(z)}(w_E) \d S_E(\proj_E K[j], w_E)\d E =0, 
\end{equation*}
hence $z\notin \supp \Sdisk_j(K,\cdot)$. 
\end{proof}

\begin{remark}
\label{re:descrp_supp_s_proj}
Theorem~\ref{thm:descrp_supp_s_proj} and relation  \eqref{eq:refThm3.8new} imply that 
$$
\supp \Sdisk_j(K,\cdot)=
\cl \left(\bigcup_{E \in \Gr(\ell,j+1)} \supp S_E(\proj_E K[j],\cdot)\right).
$$
\end{remark}

\subsection{Proofs of Theorem~\ref{thm:supp_ext} and Corollary~\ref{cor:supp_s_nested}}
\label{se:proof_thm_supp_ext}
For convex bodies $K_1,\ldots,K_{n-1}\in\Kn$ we say that $z\in\sn$ is \emph{$(K_1,\ldots,K_{n-1})$-extreme} if there exist hyperplanes $E_1,\ldots,E_{n-1}\in \Gr(n,n-1)$ such that $T(K_i,z)\subseteq E_i$ for $1\leq i\leq n-1$ and such that
$$\dim (E_1 \cap \ldots \cap E_{n-1})=1.$$
We denote the set of all such vectors by $\ext(K_1,\ldots,K_{n-1})$.

We write $TS(K,z)=T(K,z)^\perp$ for the corresponding \emph{touching spaces}. In terms of touching spaces  we can rephrase the condition  $z\in \ext(K_1,\ldots,K_{n-1})$ by requiring the existence of vectors $w_1,\ldots,w_{n-1}\in\sn$ such that $w_i\in TS(K_i,z)$ for every $1\leq i\leq n-1$ and such that
$$\dim(\lin\{w_1,\ldots,w_{n-1}\})=n-1.$$
Note that we always have $TS(K_i,z)\subseteq z^\perp$ and thus this means that $w_1,\ldots,w_{n-1}$ are linearly independent and 
$$\lin\{w_1,\ldots,w_{n-1}\}=z^\perp.$$
Furthermore, $z$ is an extreme vector of $K$ if and only if $\dim TS(K,z) = n-1$.

\medskip

In order to understand $\ext(K[j],B_L^{n-1}[n-1-j])$, we first describe the touching space $TS(B_L^{n-1},z)$. We distinguish two cases, where $\pos$ denotes the \emph{positive hull}.
\begin{description}
	\item $z\in\{\pm e_n\}$: In this case $T(B_L^{n-1},z)=\pos\{z\}$. Therefore, $TS(B_L^{n-1},z)=e_n^\perp = L$. In particular, $\dim TS(B_L^{n-1},z)=n-1$.
    \item $z\notin\{\pm e_n\}$: In this case $T(B_L^{n-1},z)=\pos\{z,e_n,-e_n\}$. Thus, $TS(B_L^{n-1},z)=z^\perp \cap e_n^\perp=z^\perp \cap L$, which is an $(n-2)$-dimensional linear subspace.
\end{description}

Our observations lead us to the following description. Lemma \ref{le:descr_ext} remains clearly true for $j=n-1$. Since the case distinction as to whether $z$ is in $\{\pm e_n\}$ or not is not needed in this case, we did not include $j=n-1$ in the statement of the lemma.

\begin{lemma}
\label{le:descr_ext}
Let $1\leq j\leq n-2$ and $K\in\Kn$. For $z\in\{\pm e_n\}$ we have
$$z\in\ext(K[j],B_L^{n-1}[n-1-j])$$
if and only if $\dim TS(K,z)\geq j$. For $z\in\sn\backslash\{\pm e_n\}$ we have
$$z\in\ext(K[j],B_L^{n-1}[n-1-j])$$
if and only if $\dim TS(K,z)\geq j$ and $ TS(K,z)\not\subseteq L$.
\end{lemma}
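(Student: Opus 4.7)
The plan is to unpack the condition $z\in\ext(K[j],B_L^{n-1}[n-1-j])$ via the touching-space characterization recalled immediately before the statement: one needs unit vectors $w_1,\ldots,w_j\in TS(K,z)$ and $w_{j+1},\ldots,w_{n-1}\in TS(B_L^{n-1},z)$ with $\lin\{w_1,\ldots,w_{n-1}\}=z^\perp$, and then to apply the two explicit formulas for $TS(B_L^{n-1},z)$ given just above. Writing $W=\lin\{w_1,\ldots,w_j\}$ and $V=\lin\{w_{j+1},\ldots,w_{n-1}\}$, the condition becomes simply $W+V=z^\perp$ subject to the inclusions $W\subseteq TS(K,z)$ and $V\subseteq TS(B_L^{n-1},z)$.

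If $z\in\{\pm e_n\}$, then $TS(B_L^{n-1},z)=L=z^\perp$, so $V$ may be taken to be any subspace of $z^\perp$ of dimension at most $n-1-j$. Hence $W+V=z^\perp$ is solvable precisely when $W\subseteq TS(K,z)$ can be chosen of dimension $j$, i.e., exactly when $\dim TS(K,z)\geq j$.

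If $z\in\sn\setminus\{\pm e_n\}$, then $TS(B_L^{n-1},z)=z^\perp\cap L$ is a hyperplane of $z^\perp$ (note that $z\notin L$ forces $z^\perp\not\subseteq L$). Since $V\subseteq z^\perp\cap L$, the equality $W+V=z^\perp$ forces $W\not\subseteq L$, and hence $TS(K,z)\not\subseteq L$; a dimension count using $\dim V\leq n-1-j$ also yields $\dim W\geq j$, so $\dim TS(K,z)\geq j$. Conversely, under these two conditions, I pick a unit vector $w_1\in TS(K,z)\setminus L$ and extend it to a linearly independent list $w_1,\ldots,w_j$ of unit vectors in $TS(K,z)$. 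Since $w_1\notin L$, one has $\dim(W\cap L)=j-1$, and choosing $w_{j+1},\ldots,w_{n-1}$ to be unit vectors in $z^\perp\cap L$ spanning a complement of $W\cap L$ in $z^\perp\cap L$ gives $W+V\supseteq W+(z^\perp\cap L)=z^\perp$, as required.

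The argument is essentially a linear-algebraic dimension count, and the only conceptual subtlety, which is also the reason for the case distinction, is that in the second case $TS(B_L^{n-1},z)$ is only a hyperplane of $z^\perp$, whereas in the first case it already coincides with all of $z^\perp$. This is what introduces the additional constraint $TS(K,z)\not\subseteq L$ needed to reach outside $L$ within $z^\perp$.
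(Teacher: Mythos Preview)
Your proof is correct and follows essentially the same strategy as the paper: reduce to choosing subspaces $W\subseteq TS(K,z)$ and $V\subseteq TS(B_L^{n-1},z)$ with $W+V=z^\perp$, and then do a linear-algebra dimension count in each case. One small slip: in the second case your parenthetical ``$z\notin L$ forces $z^\perp\not\subseteq L$'' is not a valid justification, since $z$ may well lie in $L$ (e.g.\ $z=e_1$); the correct reason $\dim(z^\perp\cap L)=n-2$ is that $z^\perp\subseteq L=e_n^\perp$ would force $e_n\in\lin\{z\}$, i.e.\ $z=\pm e_n$, and this fact is already recorded just before the lemma.
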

\begin{proof}
We start with the case $z\in\{\pm e_n\}$. Since $TS(B_L^{n-1},z)=L$,
this means that
$$z\in \ext(K[j],B_L^{n-1}[n-1-j])$$
if and only if there exist vectors $w_1,\ldots,w_j\in TS(K,z)$ and $w_{j+1},\ldots,w_{n-1}\in L$ such that $\lin\{w_1,\ldots,w_{n-1}\}=L$. Considering that $TS(K,z)\subseteq L$ and $\dim L = n-1$, the above is equivalent to $\dim TS(K,z)\geq j$. 

Now we consider the case $z\in\sn\setminus\{\pm e_n\}$. By our description of $TS(B_L^{n-1},z)$ above this means that $z$ is $(K[j],B_L^{n-1}[n-1-j])$-extreme if and only if there exist linearly independent vectors $w_1,\ldots,w_j\in TS(K,z)\subseteq z^\perp$ and $w_{j+1},\ldots,w_{n-1}\in z^\perp \cap L$ such that $\lin\{w_1,\ldots,w_{n-1}\}=z^\perp$. If the latter holds, then we clearly have $\dim TS(K,z)\geq j$, and $ TS(K,z)\not\subseteq L$, since otherwise $w_1,\ldots,w_{n-1}\in z^\perp\cap L$ contradicts $\dim (z^\perp\cap L)=n-2$.  
Now assume that $\dim TS(K,z)\geq j$ and $ TS(K,z)\not\subseteq L$. Setting $k=\dim TS(K,z)$, we have
$$\dim (TS(K,z)\cap L)= \dim TS(K,z)-1=k-1\ge j-1$$
and $\dim(z^\perp\cap L)=n-2$. Therefore, we can choose linearly independent vectors\linebreak$w_2,\ldots,w_{k}\in TS(K,z)\cap L\subseteq z^\perp\cap L$, $w_{k+1},\ldots,w_{n-1}\in z^\perp\cap L$, and $w_1\in TS(K,z)\setminus L$. Since $k\ge j$ we have $w_1,\ldots,w_j\in TS(K,z)$ and $w_{j+1},\ldots,w_{n-1}\in z^\perp\cap L$. 
\end{proof}

Before we can continue to prove our main result of this section, we need the following lemma, which is a consequence of \cite[Lemma 3.3]{Hug_Reichert_support}.
\begin{lemma}
\label{le:dim_T}
Let $1\leq j\leq n-1$, $E\in\Gr(\ell,j+1)$ and $K\in\Kn$. If $z\in E$, then
$$\dim \proj_E TS(K,z)=\dim TS_E(\proj_E K,z),$$
where $TS_E(\proj_E K,z)$ denotes the touching space of $\proj_E K$ at $z$ with respect to the ambient space $E$.
\end{lemma}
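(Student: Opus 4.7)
The plan is to prove the stronger set-theoretic equality $T(K,z)\cap E = T_E(\proj_E K,z)$, from which the desired identity of dimensions follows via a short linear algebra computation with orthogonal complements.

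First, I would reduce the assertion about touching \emph{spaces} to one about touching \emph{cones}. For any linear subspace $V\subseteq\Rn$, the identity $\dim\proj_E V = \dim E - \dim(V^\perp\cap E)$ holds, as is seen by noting that the orthogonal complement of $\proj_E V$ inside $E$ coincides with $V^\perp\cap E$. Applied with $V=TS(K,z)=T(K,z)^\perp$, this gives
$$\dim\proj_E TS(K,z) = (j+1)-\dim(\lin T(K,z)\cap E).$$
Because $z\in\operatorname{relint} T(K,z)\cap E$, a direct perturbation argument ($z+\varepsilon y\in T(K,z)\cap E$ for $y\in\lin T(K,z)\cap E$ and small $\varepsilon>0$) shows that $T(K,z)\cap E$ has the same linear span as $\lin T(K,z)\cap E$, hence the same dimension. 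Combined with $\dim TS_E(\proj_E K,z)=(j+1)-\dim T_E(\proj_E K,z)$, the claim reduces to the equality $T(K,z)\cap E = T_E(\proj_E K,z)$.

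The next step is to identify the underlying normal cones after projection. For $z\in E$ one has $h_{\proj_E K}(z)=h_K(z)$ and $F(\proj_E K,z)=\proj_E F(K,z)$, and the elementary identity $\langle\proj_E x,z'\rangle=\langle x,z'\rangle$ for $z'\in E$ then yields
$$N_E(\proj_E K,F(\proj_E K,z)) = N(K,F(K,z))\cap E.$$
This is essentially the content of \cite[Lemma 3.3]{Hug_Reichert_support}.

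Finally, $T(K,z)\cap E$ is a face of $N(K,F(K,z))\cap E$, since the intersection of a face of a convex cone with a linear subspace is always a face of the intersected cone. Moreover, since $E$ meets $\operatorname{relint} T(K,z)$ at $z$, the standard commutation of relative interior with intersection by a linear subspace gives $z\in\operatorname{relint}(T(K,z)\cap E)$. By uniqueness of the face of $N_E(\proj_E K,F(\proj_E K,z))$ containing $z$ in its relative interior, this face must coincide with $T_E(\proj_E K,z)$, proving the desired equality. I expect the main obstacle to be the normal-cone identity in the middle paragraph; once that projection behavior is in hand, the face and relative-interior identification is routine.
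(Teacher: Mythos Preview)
Your argument is correct and is precisely the deduction the paper leaves implicit: the paper does not give a self-contained proof but simply records the lemma as a consequence of \cite[Lemma 3.3]{Hug_Reichert_support}, which is exactly the normal-cone identity $N_E(\proj_E K,F(\proj_E K,z))=N(K,F(K,z))\cap E$ you single out as the key input. Your reduction via $\dim\proj_E V=(j+1)-\dim(V^\perp\cap E)$ and the subsequent identification of $T(K,z)\cap E$ as the unique face of the intersected normal cone containing $z$ in its relative interior are the natural way to unpack that citation.
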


\begin{lemma}\label{lem:ext=set}
If $1\le j\le n-2$ and $K\in\Kn$, then 
\begin{align}
\label{eq:sets_equal}
\ext(K[j],&B_L^{n-1}[n-1-j])\nonumber\\
&= \{z\in\sn : \exists\,E\in \Gr(\ell,j+1) \text{ such that } z\in \ext_E(\proj_E K)\}.
\end{align}
\end{lemma}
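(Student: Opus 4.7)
The plan is to reduce both inclusions in \eqref{eq:sets_equal} to equivalent statements about the touching space $TS(K,z)$. The left-hand side is already characterized by Lemma~\ref{le:descr_ext}. For the right-hand side, I will use Lemma~\ref{le:dim_T} to rewrite the condition $z\in\ext_E(\proj_E K)$ (which requires $z\in E$) as $\dim\proj_E TS(K,z)=\dim E-1=j$. Since $\proj_E TS(K,z)\subseteq E\cap z^\perp$ and $\dim(E\cap z^\perp)=j$, this amounts to the equality $\proj_E TS(K,z)=E\cap z^\perp$.

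The crucial simplification is to dualize this equality. When $z\in E$, the inclusion $E^\perp\subseteq z^\perp$ produces the orthogonal decomposition $z^\perp=(E\cap z^\perp)\oplus E^\perp$, and a short argument (using that $v\in\proj_E TS(K,z)$ for $v\in E\cap z^\perp$ amounts to $v\in TS(K,z)+E^\perp$) shows $\proj_E TS(K,z)=E\cap z^\perp$ is equivalent to
$$TS(K,z)+E^\perp=z^\perp.$$
Furthermore, $E\in\Gr(\ell,j+1)$ with $z\in E$ is equivalent to $E^\perp\subseteq z^\perp\cap L$ with $\dim E^\perp=n-j-1$. Thus the right-hand side of \eqref{eq:sets_equal} reduces to the existence of such a subspace $E^\perp$ satisfying $TS(K,z)+E^\perp=z^\perp$.

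With this reformulation, the inclusion $(\supseteq)$ is immediate from a dimension count: the identity $TS(K,z)+E^\perp=z^\perp$ forces $\dim TS(K,z)\geq j$, and when $z\notin\{\pm e_n\}$ the containment $E^\perp\subseteq L$ forces $TS(K,z)\not\subseteq L$ (otherwise the sum lies in $L\neq z^\perp$). These are precisely the conditions of Lemma~\ref{le:descr_ext}. Conversely, assuming those conditions, I would construct $E^\perp$ explicitly: set $H=TS(K,z)\cap z^\perp\cap L$, pick a complement $H'$ of $H$ inside $z^\perp\cap L$, and choose a $(\dim TS(K,z)-j)$-dimensional subspace $H_0\subseteq H$; then take $E^\perp=H_0\oplus H'$.

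The main technical step is to verify that this $E^\perp$ satisfies $E^\perp\cap TS(K,z)=H_0$, which follows from $H'\cap TS(K,z)=\{o\}$, and consequently $\dim(TS(K,z)+E^\perp)=\dim TS(K,z)+(n-j-1)-(\dim TS(K,z)-j)=n-1$. This dimension count hinges on knowing $\dim H$ exactly: it equals $\dim TS(K,z)-1$ when $z\notin\{\pm e_n\}$ (using $TS(K,z)\not\subseteq L$) and $\dim TS(K,z)$ when $z\in\{\pm e_n\}$ (since then $TS(K,z)\subseteq L$), and in both situations the hypothesis $\dim TS(K,z)\geq j$ ensures $H_0$ of the required dimension exists, so the construction goes through.
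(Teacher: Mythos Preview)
Your proof is correct and shares the paper's overall plan---both reduce the two sides to statements about $TS(K,z)$ via Lemma~\ref{le:descr_ext} and Lemma~\ref{le:dim_T}---but the construction step is organized differently. The paper works with $E$ directly: for $z\notin\{\pm e_n\}$ it sets $E=\lin\{z,e_n\}+U$ with $U\subseteq TS(K,z)\cap L$ of dimension $j-1$, and then checks $\dim\proj_E TS(K,z)=j$ by a short contradiction argument. You instead dualize, replacing the condition $\proj_E TS(K,z)=E\cap z^\perp$ by the additive condition $TS(K,z)+E^\perp=z^\perp$, and then build $E^\perp$ as $H_0\oplus H'$ inside $z^\perp\cap L$. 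The payoff of your dualization is that the verification becomes a transparent dimension count via $\dim(TS(K,z)+E^\perp)=\dim TS(K,z)+\dim E^\perp-\dim(TS(K,z)\cap E^\perp)$, and the construction formula is uniform across the cases $z\in\{\pm e_n\}$ and $z\notin\{\pm e_n\}$ (only the computation of $\dim H$ splits). The paper's direct construction is a touch more geometric and avoids the need to justify the dualization equivalence, but both arguments are of comparable length and difficulty.
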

\begin{proof}
First, let us observe that by Lemma~\ref{le:dim_T} a vector $z$ is an element of the right side of \eqref{eq:sets_equal} if and only if there exists $E\in \Gr(\ell,j+1)$ such that
\begin{equation}
\label{eq:dim_proj_TS_j}
z\in E \quad \text{and}\quad  j=\dim TS_E(\proj_E K,z)=\dim \proj_E TS(K,z).
\end{equation}
Here we have used that $\dim E = j+1$.

We start with the case $z\in\{\pm e_n\}$. Lemma~\ref{le:descr_ext} shows that the vector $z$ is\linebreak$(K[j],B_L^{n-1}[n-1-j])$-extreme if and only if $\dim TS(K,z)\geq j$. Since $TS(K,z)\subseteq L$ and $\ell \perp L$, this is equivalent to the existence of $E\in \Gr(\ell,j+1)$ such that
$$j=\dim \proj_E TS(K,z).$$
Since trivially $z\in \ell\subset E$, we have thus shown equivalence with \eqref{eq:dim_proj_TS_j}.

Now, let $z\in\sn\setminus\{\pm e_n\}$. Again by Lemma~\ref{le:descr_ext}, $z\in \ext(K[j],B_L^{n-1}[n-1-j])$ if and only if  $\dim TS(K,z)\geq j$ and $  TS(K,z)\not\subseteq L$. If these two conditions are satisfied, then
$$\dim (TS(K,z)\cap L)=\dim TS(K,z)-1\ge j-1$$
and
$$\lin \{z,e_n\}=(z^\perp\cap e_n^\perp)^\perp\subseteq(TS(K,z)\cap L)^\perp.$$
Let $U$ be a linear subspace of $TS(K,z)\cap L$ of dimension $j-1$.  The linear subspace $E=\lin\{z,e_n\}+U$ satisfies $z\in E$, $\ell\subset E$ and $\dim E=j-1+2=j+1$. Since $U\subseteq E\cap TS(K,z)$, we have $U\subseteq\proj_E TS(K,z)$, so that
$$\dim\proj_E TS(K,z)\ge \dim U = j-1.$$
Since $TS(K,z)\subseteq z^\perp$, we have $\proj_E TS(K,z)\subseteq \ell+U$ and thus
$$\dim\proj_E TS(K,z)\le j.$$
Suppose that $\dim\proj_E TS(K,z)= j-1$. By our choice of $U$ this implies
$$\proj_E TS(K,z)=U\subseteq TS(K,z)\cap L$$
and in particular, $\dim(\proj_\ell TS(K,z))=0$. Therefore $TS(K,z)\subseteq  \lin\{z,e_n\}^\perp\subseteq L$, which is a contradiction. Thus we have found a subspace $E\in \Gr(\ell,j+1)$ with $z\in E$ and $\dim\proj_E TS(K,z)= j$, which gives \eqref{eq:dim_proj_TS_j}.  

Conversely, assume that there is some $E\in \Gr(\ell,j+1)$ with $z\in E$ and such that $\dim\proj_E TS(K,z)= j$, which clearly implies $\dim TS(K,z)\ge j$. Suppose that $TS(K,z)\subseteq L$. We now have
$$TS(K,z)\subseteq L\cap z^\perp=e_n^\perp\cap z^\perp=\lin\{e_n,z\}^\perp.$$
Since $E=\lin\{e_n,z\}+U$ with some linear subspace $U\subseteq \lin\{e_n,z\}^\perp$, it follows that $\proj_E TS(K,z)\subseteq U$, which is a contradiction, since $\dim U=j-1$. Thus, we must have $TS(K,z)\not\subseteq L$. 
\end{proof}

\begin{proof}[Proof of Theorem~\ref{thm:supp_ext}]
Since $\ext(K[n-1])=\ext K$, the case $j=n-1$ follows from Theorem~\ref{thm:supp_s_j}. If $1\le j\le n-2$, then the assertion is implied by a combination of  Theorem~\ref{thm:descrp_supp_s_proj} and Lemma~\ref{lem:ext=set}.
\end{proof}

\begin{proof}[Proof of Corollary~\ref{cor:supp_s_nested}]
First, let us remark that Lemma~\ref{le:descr_ext} holds trivially for $j=n-1$, since $z\in \ext (K[n-1])$ if and only if $\dim TS(K,z)=n-1$. Hence, an application of that lemma yields
$$
\ext(K[k],B_L^{n-1}[n-1-k])\subseteq \ext(K[j],B_L^{n-1}[n-1-j])
$$
for $1\le j\le k\le n-1$. Now the assertion is a consequence of Theorem \ref{thm:supp_ext}.
\end{proof}

\section{New Properties of Mixed Monge--Amp\`ere Measures}
\label{se:properties_ma}

\subsection{Connections with Mixed Area Measures and Mixed Volumes\texorpdfstring{\\}{ }on \texorpdfstring{$\K^{n}$}{Kn}}
\label{se:connections_ma}
For $\omega\subseteq \sn$ we write
$$\widehat\omega =\{tu : t\in[0,1], u\in \omega\}\subset \Rn$$
and
$$\widetilde\omega = \{tu : t\in (0,1), u\in \omega\}\subset \Rn.$$
If $\omega$ is a Borel set, then so are $\widehat\omega$ and $\widetilde\omega$. 
Throughout the following let $q(x)=|x|^2/2$ for $x\in\Rn$. The next result is a consequence of \cite[Corollary 5.10]{Colesanti-Hug_MM}.

\begin{lemma}
\label{le:sam_ma_hat}
If $K\in\Kn$ and $\omega\subseteq \sn$ is a Borel set, then
$$S_{n-1}(K,\omega)=n\,\MA(h_{K}[n-1],q;\widehat\omega).$$
\end{lemma}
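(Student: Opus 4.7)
The plan is to expand both sides as a polynomial in an auxiliary parameter $s\ge 0$ and extract the coefficient of $s$. Since $h_K,q\in\fconvf$, the polynomial expansion \eqref{eq:mixed_ma_multinom} applied to $h_K+sq$ yields
\begin{equation*}
\MA(h_K+sq;\widehat\omega)=\sum_{j=0}^n\binom{n}{j}s^{n-j}\,\MA(h_K[j],q[n-j];\widehat\omega),
\end{equation*}
so that the coefficient of $s^1$ on the right is precisely $n\,\MA(h_K[n-1],q;\widehat\omega)$. It therefore suffices to compute $\MA(h_K+sq;\widehat\omega)$ as a polynomial in $s$ from first principles and identify its linear coefficient with $S_{n-1}(K,\omega)$.

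By the definition \eqref{eq:def_MA}, $\MA(h_K+sq;\widehat\omega)$ equals the Lebesgue measure of $\bigcup_{x\in\widehat\omega}\partial(h_K+sq)(x)$. Combining Lemma~\ref{le:sum_subdiff} with the identities $\partial q(x)=\{x\}$, $\partial h_K(tu)=F(K,u)$ for $u\in\sn$ and $t>0$, and $\partial h_K(o)=K$, one arrives at
\begin{equation*}
\bigcup_{x\in\widehat\omega}\partial(h_K+sq)(x)=K\cup\bigcup_{u\in\omega,\,t\in(0,1]}\bigl(F(K,u)+stu\bigr).
\end{equation*}
The next step is to recognise the second union as the local parallel set
\begin{equation*}
A_s(\omega):=\{y\in(K+sB^n)\setminus K:\nu_K(y)\in\omega\},
\end{equation*}
where $\nu_K(y)=(y-p_K(y))/|y-p_K(y)|$ denotes the outer unit normal at the metric projection $p_K(y)$ onto $K$: a point $y=x+stu$ in the union has $p_K(y)=x$, $\nu_K(y)=u$, and $|y-p_K(y)|=st\in(0,s]$, and conversely every $y\in A_s(\omega)$ factors as $p_K(y)+|y-p_K(y)|\,\nu_K(y)$ with $p_K(y)\in F(K,\nu_K(y))$.

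Since $A_s(\omega)$ and $K$ are disjoint, invoking the classical local Steiner formula (see, e.g., \cite[Section~4.2]{Schneider_CB}) yields
\begin{equation*}
\MA(h_K+sq;\widehat\omega)=V_n(K)+\frac{1}{n}\sum_{k=0}^{n-1}s^{n-k}\binom{n}{k}S_k(K,\omega).
\end{equation*}
Reading off the coefficient of $s^1$ (the $k=n-1$ term) and comparing it with the first display yields $n\,\MA(h_K[n-1],q;\widehat\omega)=S_{n-1}(K,\omega)$, which is the claim. The main technical hurdle is the identification of the union of shifted faces with the local parallel set $A_s(\omega)$, which relies on the uniqueness of the metric projection outside $K$ together with the description of faces in terms of support sets. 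As a bonus, matching other powers of $s$ would give the more general identity $S_k(K,\omega)=n\,\MA(h_K[k],q[n-k];\widehat\omega)$ for every $0\le k\le n-1$.
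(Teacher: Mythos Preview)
Your argument is correct. The paper does not give a proof of this lemma at all; it simply records it as a consequence of \cite[Corollary~5.10]{Colesanti-Hug_MM}, which identifies Hessian measures of the function $\tfrac{1}{2}d_K^2=(\ind_K\infconv q)$ with the area measures of $K$. Your route is genuinely different and more self-contained within the present paper: you compute $\MA(h_K+sq;\widehat\omega)$ directly from the definition \eqref{eq:def_MA} and Lemma~\ref{le:sum_subdiff}, recognise the resulting subgradient image as $K$ together with the local outer parallel set $A_s(\omega)$, and then invoke the classical local Steiner formula. The only external ingredient is the local Steiner formula itself, which is standard. As you note, comparing all coefficients in $s$ rather than just the linear one yields $S_k(K,\omega)=n\,\MA(h_K[k],q[n-k];\widehat\omega)$ for every $0\le k\le n-1$, so your method in fact recovers the full content of the cited corollary in the convex case with no extra work. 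One small point worth making explicit: your inclusion of the contribution $K$ at $x=o$ presumes $\omega\neq\emptyset$ (so that $o\in\widehat\omega$); the empty case is of course trivial.
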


We establish the following generalization.

\begin{corollary}\label{Cor:mamMA}
If $K_1,\ldots,K_{n-1}\in\Kn$ and $\omega\subseteq \sn$ is a Borel set, then
$$S(K_1,\ldots,K_{n-1},\omega)=n\,\MA(h_{K_1},\ldots,h_{K_{n-1}},q;\widehat\omega).$$
\end{corollary}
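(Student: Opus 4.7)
The natural approach is polarization, using Lemma~\ref{le:sam_ma_hat} as the ``diagonal'' case. Both sides of the claimed identity depend multilinearly and symmetrically on $(K_1,\ldots,K_{n-1})$: for the left side this is the defining property~\eqref{eq:def_mixed_s} of the mixed area measure, while for the right side it follows from the additivity and positive homogeneity of the support function in $K$ together with the multilinearity and symmetry of the mixed Monge--Amp\`ere measure recorded in~\eqref{eq:ma_linear} (applied $n-1$ times, with the last slot fixed as $q$). Hence it suffices to verify the identity along the diagonal $K_1=\cdots=K_{n-1}=K$ and then invoke polarization.

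Concretely, I would fix $m\in\N$, bodies $K_1,\ldots,K_m\in\Kn$ and parameters $\lambda_1,\ldots,\lambda_m\ge 0$, and consider $K=\lambda_1 K_1+\cdots+\lambda_m K_m$. On the one hand, expanding~\eqref{eq:def_mixed_s} gives
\begin{equation*}
S_{n-1}(K,\omega)=\sum_{i_1,\ldots,i_{n-1}=1}^{m}\lambda_{i_1}\cdots\lambda_{i_{n-1}}\,S(K_{i_1},\ldots,K_{i_{n-1}},\omega).
\end{equation*}
On the other hand, $h_K=\lambda_1 h_{K_1}+\cdots+\lambda_m h_{K_m}$, and repeatedly applying the multilinearity~\eqref{eq:ma_linear} in each of the first $n-1$ entries of $\MA(\,\cdot\,,\ldots,\,\cdot\,,q;\widehat{\omega})$ (keeping $q$ in the last slot) yields
\begin{equation*}
n\,\MA(h_K[n-1],q;\widehat{\omega})=n\sum_{i_1,\ldots,i_{n-1}=1}^{m}\lambda_{i_1}\cdots\lambda_{i_{n-1}}\,\MA(h_{K_{i_1}},\ldots,h_{K_{i_{n-1}}},q;\widehat{\omega}).
\end{equation*}
By Lemma~\ref{le:sam_ma_hat} applied to $K$, the two left-hand sides coincide. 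Both right-hand sides are polynomials in $(\lambda_1,\ldots,\lambda_m)$ whose coefficients are symmetric in their arguments; comparing coefficients of $\lambda_1\cdots\lambda_{n-1}$ (taking $m=n-1$ suffices, but the general polarization identity is cleaner) gives the desired formula.

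There is no real obstacle in the argument; the only subtle point is that the claimed identity involves the mixed area measure of $n-1$ bodies while the mixed Monge--Amp\`ere measure has $n$ entries, so the symmetry and multilinearity must be used only in the first $n-1$ slots of $\MA$ (with $q$ held fixed)—which is exactly what~\eqref{eq:ma_linear} provides. Once the polarization identity is set up, the argument reduces to matching coefficients of the same multilinear polynomial.
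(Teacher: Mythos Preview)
Your proof is correct and follows essentially the same route as the paper: expand both sides multilinearly in $\lambda_1,\ldots,\lambda_m$ (using~\eqref{eq:def_mixed_s} on the left and~\eqref{eq:ma_linear} on the right), invoke Lemma~\ref{le:sam_ma_hat} on the diagonal, and compare coefficients. The paper packages the right-hand expansion as a separate displayed identity before applying the lemma, but the argument is the same polarization.
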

\begin{proof}
Let $m\in\N$, $K_1,\ldots,K_m\in\Kn$, and $\lambda_1,\ldots,\lambda_m\geq 0$. As an immediate consequence of \eqref{eq:ma_linear}, we have
\begin{align}
\label{eq:PLdetail4.3}
\MA(&h_{\lambda_1 K_1 +\cdots + \lambda_m K_m}[j],q[n-j];\eta)\notag\\
&= \sum_{i_1,\ldots,i_{j}=1}^m \lambda_{i_1} \cdots \lambda_{i_{j}} \MA(h_{K_{i_1}},\ldots,h_{K_{i_{j}}},q[n-j];\eta)
\end{align}
for $1\leq j\leq n$ and Borel sets $\eta\subseteq\Rn$. 
An application of \eqref{eq:PLdetail4.3} for $j=n-1$ and Lemma~\ref{le:sam_ma_hat} together yield
\begin{align*}
S_{n-1}(\lambda_1 K_1 + \cdots + \lambda_m K_m, \omega) &= n\, \MA(h_{\lambda_1 K_1 + \cdots + \lambda_m K_m}[n-1],q;\widehat\omega)\\
&= \sum_{i_1,\ldots,i_{n-1}=1}^m \lambda_{i_1}\ldots\lambda_{i_{n-1}} n\,\MA(h_{K{i_1}},\ldots,h_{K_{i_{n-1}}},q;\widehat\omega)
\end{align*}
for Borel sets $\omega\subseteq \sn$. The result now follows after a comparison with relation \eqref{eq:def_mixed_s}.
\end{proof}

Next, we turn our attention to mixed volumes. For the proof of the following result, we use that for $K\in\Kn$ and $r>0$ the function $\ind_K\infconv (r\sq q)\in\fconvs$ is continuously differentiable on $\Rn$ with
\begin{equation}
\label{eq:grad_moreau_env}
\nabla (\ind_K \infconv (r\sq q))(x)=
\frac 1r (x-p_K(x))
\end{equation}
for $x\in\Rn$. See, for example \cite[Theorem 2.26 (b)]{RockafellarWets} (or use $\ind_K \infconv (r\sq q))=(2r)^{-1} d_K^2$ and apply \cite[Proposition 5.3]{Colesanti-Hug_MM}). Here $p_K\colon \Rn\to K$ denotes the \emph{metric projection} of $K$, which assigns to $x\in\Rn$ the unique point $p_K(x)\in K$ such that $|p_K(x)-x|=\min\{|y-x|:y\in K\}$, which is the distance $d_K(x)$ from $x$ to $K$.

\begin{lemma}
\label{le:ma_vanish_o_s}
For $K\in\Kn$ and $0\leq j\leq n-1$,
$$\MA(h_K[j],q[n-j];\{o\}\cup \s^{n-1})=0.$$
\end{lemma}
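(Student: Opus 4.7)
The plan is to compute $\MA(\lambda_1 h_K+\lambda_2 q;\{o\}\cup\sn)$ explicitly as a polynomial in $\lambda_1,\lambda_2$ with $\lambda_2>0$, and then read off the mixed-measure identity from the expansion in \eqref{eq:mixed_ma_multinom}.

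Setting $v_\lambda=\lambda_1 h_K+\lambda_2 q$, Lemma~\ref{le:sum_subdiff} gives $\partial v_\lambda(x)=\lambda_1\partial h_K(x)+\{\lambda_2 x\}$. At $x=o$ we have $\partial h_K(o)=K$, hence $\partial v_\lambda(o)=\lambda_1 K$, so by the definition \eqref{eq:def_MA} of the Monge--Amp\`ere measure, $\MA(v_\lambda;\{o\})=V_n(\lambda_1 K)=\lambda_1^n V_n(K)$. For $x\in\sn$ we have $\partial h_K(x)=F(K,x)$, and since $F(B^n,x)=\{x\}$, the Minkowski-linearity of support sets yields
$$\partial v_\lambda(x)=\lambda_1 F(K,x)+\lambda_2 x=F(\lambda_1 K+\lambda_2 B^n,x).$$
Taking the union over $x\in\sn$ produces the full boundary of the convex body $\lambda_1 K+\lambda_2 B^n$, which has non-empty interior because $\lambda_2>0$; consequently $\bigcup_{x\in\sn}\partial v_\lambda(x)=\partial(\lambda_1 K+\lambda_2 B^n)$ has $V_n$-measure zero. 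Hence $\MA(v_\lambda;\sn)=0$, and combining with the contribution at the origin we obtain
$$\MA(v_\lambda;\{o\}\cup\sn)=\lambda_1^n V_n(K)$$
for all $\lambda_1\ge 0$ and $\lambda_2>0$.

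Feeding this into the multinomial expansion \eqref{eq:mixed_ma_multinom} yields the polynomial identity
$$\sum_{j=0}^n\binom{n}{j}\lambda_1^j\lambda_2^{n-j}\MA(h_K[j],q[n-j];\{o\}\cup\sn)=\lambda_1^n V_n(K).$$
For each fixed $\lambda_2>0$, this is an equality of polynomials in $\lambda_1$; matching coefficients of $\lambda_1^j$ for $0\le j\le n-1$ forces $\lambda_2^{n-j}\MA(h_K[j],q[n-j];\{o\}\cup\sn)=0$, and since $\lambda_2^{n-j}>0$ the mixed measure vanishes, as claimed. The only nontrivial point is the identification $\bigcup_{x\in\sn}\partial v_\lambda(x)=\partial(\lambda_1 K+\lambda_2 B^n)$ together with the fact that this boundary is Lebesgue-negligible; everything else is a direct application of Lemma~\ref{le:sum_subdiff}, of the standard formulas $\partial h_K(o)=K$ and $\partial h_K(x)=F(K,x)$, and of polynomial coefficient comparison.
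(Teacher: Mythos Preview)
Your proof is correct and follows the same overall architecture as the paper: compute $\MA(h_K+\lambda q;\{o\}\cup\sn)$ explicitly, expand via \eqref{eq:mixed_ma_multinom}, and compare coefficients. The difference lies in how the key quantity is evaluated. The paper passes to the conjugate side via $\MA(h_K+rq;\cdot)=\MAp(\ind_K\infconv(r\sq q);\cdot)$, invokes the pushforward formula \eqref{eq:def_map}, and reads off from the Moreau-envelope gradient \eqref{eq:grad_moreau_env} that the preimage of $\{o\}$ is $K$ and the preimage of $\sn$ is $\partial(K+rB^n)$. You instead stay on the primal side and compute the subdifferential image directly from $\partial h_K(o)=K$, $\partial h_K(x)=F(K,x)$, and Lemma~\ref{le:sum_subdiff}. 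Your route is more elementary in that it avoids the conjugation machinery and the Moreau-envelope formula; the paper's route has the advantage of fitting the surrounding narrative, where $\MAp$ and \eqref{eq:grad_moreau_env} are already in play. Both computations are really two sides of the same duality: you describe $\bigcup_{y\in B}\partial v(y)$, the paper describes $\{x:\nabla u(x)\in B\}$ for $u=v^*$.
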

\begin{proof}
For $K\in\Kn$ and $r>0$ it follows from \eqref{eq:grad_moreau_env} that $\nabla (\ind_K\infconv (r\sq q))(x)= o$ if and only if $x\in K$. Furthermore, $\nabla (\ind_K\infconv (r\sq q))(x)\in\sn$ if and only if $|x-p_K(x)|=r$, which is the case precisely when $x\in\partial (K+ r B^n)$. Using \eqref{eq:map_ma}, \eqref{eq:def_map}, and \eqref{eq:sum_conj}, we thus conclude that
\begin{align*}
\MA(h_K+r q;\{o\}\cup \s^{n-1}) &= \MAp\big(\ind_K \infconv (r\sq q);\{o\}\cup \s^{n-1}\big)\\
&=\int_{\Rn} \chi_{\{o\}\cup \s^{n-1}}\big(\nabla (\ind_K \infconv (r\sq q))(x)\big) \d x\\
&=\int_{\Rn} \chi_{K}(x)\d x=V_n(K).
\end{align*}
Since $\ind_K\infconv(0\sq q)=\ind_K \infconv \ind_{\{o\}} = \ind_K$, the above also holds for $r=0$. In particular, $\MA(h_K+r q;\{o\}\cup \s^{n-1})$ is constant in $r$. On the other hand, it is a consequence of \eqref{eq:mixed_ma_multinom} that
$$\MA(h_K+ rq;\{o\} \cup \s^{n-1}) = \sum_{j=0}^n \binom{n}{j} r^{n-j} \MA(h_K[j],q[n-j];\{o\}\cup \s^{n-1}),$$
and the conclusion now follows after a comparison of coefficients.
\end{proof}

\begin{remark}
If $0\leq j\leq n-1$ and $K_1,\ldots,K_j\in\K^n$, then Lemma~\ref{le:ma_vanish_o_s} and \eqref{eq:PLdetail4.3} imply that 
$$\MA(h_{K_1},\ldots,h_{K_j},q[n-j];\{o\}\cup \s^{n-1})=0.$$
In particular, this shows that in the following theorem, the integral is well defined since the integration can be extended over $\interior (B^n)\setminus\{o\}$ or over $B^n\setminus\{o\}$ without changing the integral, where $\interior (B^n)$ denotes the interior of the Euclidean unit ball.
\end{remark}

\begin{theorem}
If $K_1,\ldots,K_n\in\Kn$, then
$$V(K_1,\ldots,K_n)=\int_{\interior(B^n)} h_{K_n}\left(\frac{x}{|x|}\right) \d\MA(h_{K_1},\ldots,h_{K_{n-1}},q;x).$$
\end{theorem}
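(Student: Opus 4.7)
The plan is to reduce the identity to the classical integral representation
$$V(K_1,\ldots,K_n)=\frac{1}{n}\int_{\s^{n-1}} h_{K_n}(z)\,\d S(K_1,\ldots,K_{n-1},z),$$
which is available from Section~\ref{se:convex_bodies} (after exploiting symmetry of the mixed area measure), and then to rewrite the spherical integral on the right as an integral against the mixed Monge--Amp\`ere measure on $\R^n$ using Corollary~\ref{Cor:mamMA}.

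Concretely, I would first observe that the map $r\colon\interior(B^n)\setminus\{o\}\to\s^{n-1}$, $x\mapsto x/|x|$, is Borel measurable and that $r^{-1}(\omega)=\widetilde\omega$ for every Borel set $\omega\subseteq\s^{n-1}$. Denote $\mu=\MA(h_{K_1},\ldots,h_{K_{n-1}},q;\cdot)$. By the remark following Lemma~\ref{le:ma_vanish_o_s}, we have $\mu(\{o\}\cup\s^{n-1})=0$, so $\mu$ is concentrated on $\interior(B^n)\setminus\{o\}$ (together with $\R^n\setminus B^n$, but we only need that no mass sits on $\{o\}\cup\s^{n-1}$). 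Since
$$\widehat\omega=\{o\}\cup\widetilde\omega\cup\omega\quad\text{up to a $\mu$-null set},$$
Corollary~\ref{Cor:mamMA} yields
$$\mu(\widetilde\omega)=\mu(\widehat\omega)=\tfrac{1}{n}S(K_1,\ldots,K_{n-1},\omega)$$
for every Borel $\omega\subseteq\s^{n-1}$. Hence the pushforward of $\mu|_{\interior(B^n)\setminus\{o\}}$ under $r$ equals $\tfrac{1}{n}S(K_1,\ldots,K_{n-1},\cdot)$.

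To finish, I would apply the change-of-variables formula for pushforward measures to the bounded Borel function $h_{K_n}$ (which is continuous on $\s^{n-1}$, hence bounded there): writing the integrand on the left as $(h_{K_n}\circ r)(x)$, we obtain
$$\int_{\interior(B^n)} h_{K_n}\!\left(\tfrac{x}{|x|}\right)\d\mu(x)=\int_{\s^{n-1}}h_{K_n}(z)\,\d(r_{\ast}\mu)(z)=\tfrac{1}{n}\int_{\s^{n-1}}h_{K_n}(z)\,\d S(K_1,\ldots,K_{n-1},z),$$
which equals $V(K_1,\ldots,K_n)$ by symmetry of the mixed area measure in its entries together with the formula $V(K_1,\ldots,K_n)=\frac{1}{n}\int_{\s^{n-1}} h_{K_1}\,\d S(K_2,\ldots,K_n,\cdot)$.

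The substantive content is really already carried by Corollary~\ref{Cor:mamMA} and Lemma~\ref{le:ma_vanish_o_s}; the only mildly delicate point is being clean about the transfer between spherical and radial descriptions, i.e.\ that the radial projection carries $\mu$ restricted to $\interior(B^n)\setminus\{o\}$ exactly to $\tfrac{1}{n}S(K_1,\ldots,K_{n-1},\cdot)$. If one prefers to avoid the pushforward language, the same argument can be carried out by first verifying the identity for $h_{K_n}$ replaced by an indicator function $\chi_\omega$ with $\omega\subseteq\s^{n-1}$ Borel (this is literally Corollary~\ref{Cor:mamMA}), then extending by linearity to simple functions, and finally passing to the limit via monotone or dominated convergence, using that $h_{K_n}|_{\s^{n-1}}$ is bounded and continuous and that $\mu$ is a finite measure on any bounded set.
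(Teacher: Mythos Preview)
Your proposal is correct and follows essentially the same approach as the paper: both use Corollary~\ref{Cor:mamMA} (the paper actually starts from the diagonal Lemma~\ref{le:sam_ma_hat} and polarizes at the end, whereas you invoke the already-mixed corollary directly) together with Lemma~\ref{le:ma_vanish_o_s} to pass from $\widehat\omega$ to $\widetilde\omega$, then transfer to the spherical integral and apply the classical representation of $V(K_1,\ldots,K_n)$. Your pushforward formulation is just a repackaging of the paper's indicator-function computation $S_{n-1}(K,\omega)=n\int_{\interior(B^n)}\chi_\omega(x/|x|)\,\d\MA(h_K[n-1],q;x)$.
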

\begin{proof}
For any $K\in\Kn$ and Borel set $\omega\subseteq \s^{n-1}$, it follows from Lemma~\ref{le:sam_ma_hat} hat
\begin{align*}
S_{n-1}(K,\omega)&= n \MA(h_K[n-1],q;\widehat\omega)\\
&= n \MA(h_K[n-1],q;\widetilde \omega)\\
&= n \int_{\Rn} \chi_{\widetilde \omega}(x) \d \MA(h_K[n-1],q;x)\\
&= n \int_{\interior(B^n)} \chi_{\omega}\left(\frac{x}{|x|}\right) \d\MA(h_K[n-1],q;x),
\end{align*}
where we have used Lemma~\ref{le:ma_vanish_o_s} in the second equality. We can now write
\begin{align*}
V(K[n-1],L)&=\frac{1}{n} \int_{\s^{n-1}} h_L(z) \d S_{n-1}(K,z)\\
&= \int_{\interior(B^n)} h_L\left(\frac{x}{|x|}\right) \d\MA(h_K[n-1],q;x)
\end{align*}
for $L\in\Kn$. The statement now follows by multilinearity.
\end{proof}

Another connection between mixed volumes and mixed Monge--Amp\`ere measures is due to the next result.
\begin{lemma}
\label{le:MA_bodies}
If $K_1,\ldots,K_n\in\Kn$, then
$$\MA(h_{K_1},\ldots,h_{K_n};B)= V(K_1,\ldots,K_n) \delta_o(B)$$
for Borel sets $B\subseteq \Rn$. In particular,
$$\binom{n}{j} \MA(h_K[j],h_{B^n}[n-j];B)=\kappa_{n-j} V_j(K) \delta_o(B)$$ 
for $0\leq j\leq n$.
\end{lemma}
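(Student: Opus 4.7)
The plan is to reduce to the diagonal case $\MA(h_K[n];\cdot) = V_n(K)\,\delta_o$ and then polarize using the Minkowski linearity of support functions. First I would verify the diagonal case directly from the definition \eqref{eq:def_MA}. Recall that $h_K$ is sublinear, with $\partial h_K(o)=K$, while for $x\neq o$ one has $\partial h_K(x)=F(K,x)$, the support set of $K$ with outer normal $x$. In particular $\partial h_K(x)\subseteq K$ for every $x\in\Rn$. For a Borel set $B\subseteq\Rn$, write $U(B)=\bigcup_{b\in B}\partial h_K(b)$. If $o\in B$, then $K=\partial h_K(o)\subseteq U(B)\subseteq K$, hence $\MA(h_K;B)=V_n(K)$. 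If $o\notin B$, then $U(B)\subseteq\bigcup_{x\neq o}F(K,x)$, and each support set $F(K,x)$ is a proper face of $K$; so when $\dim K=n$ we have $U(B)\subseteq\partial K$ with $V_n(\partial K)=0$, while if $\dim K<n$ already $V_n(K)=0$. In both cases $\MA(h_K;B)=0$, which establishes $\MA(h_K;\cdot)=V_n(K)\,\delta_o$.

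For the polarization step, I would use $h_{\lambda_1 K_1+\cdots+\lambda_n K_n}=\lambda_1 h_{K_1}+\cdots+\lambda_n h_{K_n}$ for $\lambda_1,\ldots,\lambda_n\ge 0$, so that the diagonal identity yields
$$
\MA(\lambda_1 h_{K_1}+\cdots+\lambda_n h_{K_n};\cdot)
=V_n(\lambda_1 K_1+\cdots+\lambda_n K_n)\,\delta_o.
$$
The left side expands via \eqref{eq:def_mixed_ma} as $\sum\lambda_{i_1}\cdots\lambda_{i_n}\MA(h_{K_{i_1}},\ldots,h_{K_{i_n}};\cdot)$ and the right side, via the multilinear expansion of the mixed volume, as $\sum\lambda_{i_1}\cdots\lambda_{i_n}V(K_{i_1},\ldots,K_{i_n})\,\delta_o$. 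Since both mixed functionals are symmetric, a comparison of coefficients (equivalently, a direct application of the polarization formula \eqref{eq:polarization_formula}) yields
$$
\MA(h_{K_1},\ldots,h_{K_n};\cdot)=V(K_1,\ldots,K_n)\,\delta_o,
$$
which is the first claim. The second assertion then follows by specializing to $j$ copies of $K$ and $n-j$ copies of $B^n$ and invoking \eqref{eq:intrinsic_mixed} to rewrite $V(K[j],B^n[n-j])=\binom{n}{j}^{-1}\kappa_{n-j}V_j(K)$.

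The only mildly delicate point is the diagonal step, where one must carefully argue that all the mass of $\MA(h_K;\cdot)$ concentrates at the origin; this boils down to the observation that for $x\neq o$ the set $\partial h_K(x)=F(K,x)$ sits in $\partial K$ (or in a lower-dimensional body), which has $n$-dimensional Lebesgue measure zero. Once that is in hand, everything else is a routine identification of coefficients in a symmetric multilinear expansion.
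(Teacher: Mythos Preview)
Your proof is correct and follows the same overall strategy as the paper: establish the diagonal identity $\MA(h_K;\cdot)=V_n(K)\,\delta_o$, then polarize via $h_{\lambda_1 K_1+\cdots+\lambda_m K_m}=\sum\lambda_i h_{K_i}$ and compare coefficients, finally specializing with \eqref{eq:intrinsic_mixed}. The only difference is in how the diagonal step is handled: the paper passes to the conjugate via $\MA(h_K;\cdot)=\MAp(\ind_K;\cdot)$ and uses the pushforward representation \eqref{eq:def_map} (noting that $\nabla\ind_K\equiv o$ almost everywhere on $K$), whereas you compute $\partial h_K$ directly and argue that $\partial h_K(x)=F(K,x)\subseteq\partial K$ for $x\neq o$. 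Your argument is slightly more self-contained, since it avoids the conjugate machinery; the paper's version makes the link to $\MAp$ explicit, which fits the surrounding narrative. Both are equally short and valid.
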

\begin{proof}
For $K\in\Kn$ we have
\begin{equation}
\label{eq:cma_ind_K}
\MA(h_K;B)=\MAp(\ind_K;B)=\int_K \chi_B(\nabla \ind_K (x)) \d x = V_n(K) \delta_o(B)
\end{equation}
for Borel sets $B\subseteq \Rn$. Next, for $m\in\N$ let $\lambda_1,\ldots,\lambda_m\geq 0$ and $K_1,\ldots,K_m\in\Kn$.
We use \eqref{eq:def_mixed_ma} together with \eqref{eq:cma_ind_K} to obtain
\begin{align*}
\sum_{i_1,\ldots,i_n=1}^m \lambda_{i_1}\cdots \lambda_{i_n} \MA(h_{K_{i_1}},\ldots,h_{K_{i_n}};B) &= \MA(\lambda_1  h_{K_1}+ \cdots + \lambda_m h_{K_m};B)\\
&= \MA(h_{\lambda_1 K_1+\cdots + \lambda_m K_m};B)\\
&= V_n(\lambda_1 K_1 + \cdots + \lambda_m K_m)\delta_o(B)\\
&= \sum_{i_1,\ldots,i_n=1}^m \lambda_{i_1}\cdots \lambda_{i_n} V(K_{i_1},\ldots,K_{i_n}) \delta_o(B).
\end{align*}
The result now follows after comparing coefficients together with \eqref{eq:intrinsic_mixed}.
\end{proof}

For a similar result, where integrals with respect to mixed Monge--Amp\`ere measures are related to mixed volumes of convex bodies in $(n+1)$-dimensional space, we refer to \cite{Klartag_marginals}.

\subsection{Connections with Mixed Area Measures on \texorpdfstring{$\K^{n+1}$}{K(n+1)}}
\label{se:connections_ma2}
Let
$$\fconvcd = \{u\in\fconvs : \dom u \text{ is compact}\}.$$
We associate with each $u\in\fconvcd$ a convex body $K^u\in\K^{n+1}$. For this, we set $M_u=\max_{x\in\dom u} u(x)$, which is finite since $u$ has compact domain. We now define
\begin{equation}
\label{def:ku}
K^u=\epi u \cap\{x\in\R^{n+1} : \langle x,e_{n+1}\rangle \leq M_u\}.
\end{equation}
Next, let $u_1,u_2\in\fconvcd$. We have
$$\epi\big((\lambda_1 \sq u_1)\infconv (\lambda_2 \sq u_2)\big)=\lambda_1 \epi(u_1)+\lambda_2 \epi(u_2)$$
for $\lambda_1,\lambda_2\ge 0$ not both zero and, since the maximum of the sum of two functions is less than or equal to the sum of their maxima,
$$M_{(\lambda_1 \sq u_1)\infconv (\lambda_2 \sq u_2)}\leq \lambda_1 M_{u_1}+\lambda_2 M_{u_2}.$$
Thus, it follows from \eqref{def:ku} that for every $\lambda_1,\lambda_2\geq 0$ there exists a constant $c\geq 0$, depending on $u_1,u_2,\lambda_1,\lambda_2$, such that
\begin{equation}
\label{eq:sum_ku}
K^{\lambda_1 \sq u_1\infconv \lambda_2 \sq u_2} + [o,c\,e_{n+1}] =  \lambda_1 K^{u_1}+\lambda_2 K^{u_2}.
\end{equation}

\begin{remark}
Depending on the application, it might make sense to symmetrize $K^u$, i.e., to consider its union with an appropriate translate of its reflection at the hyperplane $e_{n+1}^\bot$. We remark that all the results presented here would remain unchanged if we considered this alternative definition for $K^u$.
\end{remark}

We need the following result, which was shown in an equivalent form in \cite[Corollary 3.7]{Knoerr_Ulivelli}. Here, $\s^n_{-}=\{\nu\in\s^n : \langle \nu,
e_{n+1}\rangle <0\}$ denotes the lower half-sphere of dimension $n$ and $\gnom\colon \s_-^n \to\Rn$,
$$
\gnom(\nu)= \frac{(\nu_1,\dots,\nu_n)}{|\nu_{n+1}|}
$$
denotes the gnomonic projection, where we write $\nu=(\nu_1,\ldots,\nu_{n+1})$ for $\nu\in\s_-^n$.

\begin{lemma}
\label{thm:ma_sn}
If $\varphi\colon \Rn \to [0,\infty)$ is measurable, then
$$\int_{\Rn}\varphi(y)\d\MAp(u;y)=\int_{\s_-^n} \tilde{\varphi}(\nu)\d S_n(K^u,\nu),$$
for $u\in\fconvcd$, where
\begin{equation}
\label{eq:tilde_zeta}
\tilde{\varphi}(\nu)=|\langle \nu,e_{n+1}\rangle|\varphi(\gnom(\nu))
\end{equation}
for $\nu\in\s_-^n$.
\end{lemma}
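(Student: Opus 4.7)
The plan is to view both sides as push-forwards of the Lebesgue measure on $\dom u$ under closely related maps. By definition, $\MAp(u;\cdot)$ is the push-forward of $\hm^n\llcorner\dom u$ under the (a.e.~defined) gradient $\nabla u$, so the left-hand side equals $\int_{\dom u}\varphi(\nabla u(x))\,\d x$. On the other hand, the surface area measure $S_n(K^u,\cdot)$ restricted to $\s_-^n$ should be obtained as the push-forward of $\hm^n$ on the lower boundary of $K^u$ (the graph of $u$) under the spherical image map. The key observation is that the gnomonic projection $\gnom$ and the factor $|\langle\nu,e_{n+1}\rangle|$ in the definition of $\tilde\varphi$ are tailored exactly to convert the surface measure on the graph back to the flat Lebesgue measure on $\dom u$.

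I would first treat the case when $u$ is of class $C^2$ on a neighborhood of $\dom u$ with positive definite Hessian and with $\dom u$ strictly convex and smooth. In that case $\partial K^u$ splits into the top cap at height $M_u$ (with outer normal $e_{n+1}\notin \s_-^n$), a vertical mantle over $\partial\dom u$ (whose outer normals have $\nu_{n+1}=0$, hence do not lie in $\s_-^n$), and the graph of $u$ itself. For $x\in\interior\dom u$, the outer unit normal to $\epi u$ at $(x,u(x))$ is
\begin{equation*}
\nu(x)=\frac{(\nabla u(x),-1)}{\sqrt{1+|\nabla u(x)|^2}}\in\s_-^n,
\end{equation*}
and one checks directly that $\gnom(\nu(x))=\nabla u(x)$ and $|\langle\nu(x),e_{n+1}\rangle|=(1+|\nabla u(x)|^2)^{-1/2}$. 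Since $\hm^n$ on the graph is given by $\sqrt{1+|\nabla u(x)|^2}\,\d x$, an application of the area formula yields
\begin{equation*}
\int_{\s_-^n}\tilde\varphi(\nu)\,\d S_n(K^u,\nu)=\int_{\dom u}\frac{\varphi(\nabla u(x))}{\sqrt{1+|\nabla u(x)|^2}}\sqrt{1+|\nabla u(x)|^2}\,\d x=\int_{\dom u}\varphi(\nabla u(x))\,\d x,
\end{equation*}
which matches $\int_{\Rn}\varphi\,\d\MAp(u;\cdot)$.

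To extend to a general $u\in\fconvcd$ I would use a standard approximation, e.g.\ by the regularizations $u_k=(u\infconv (\tfrac{1}{k}\sq q))+\tfrac{1}{k}q$ (restricted and then truncated suitably so that they remain in $\fconvcd$), which epi-converge to $u$ and are eventually smooth and strictly convex on $\dom u$. Continuity of convex conjugation on $\fconv$ and the continuity of $\MA$ with respect to epi-convergence (for real-valued convex functions) give weak convergence of $\MAp(u_k;\cdot)$ to $\MAp(u;\cdot)$, while the convex bodies $K^{u_k}$ converge to $K^u$ in the Hausdorff metric by \eqref{def:ku}, so that $S_n(K^{u_k},\cdot)\to S_n(K^u,\cdot)$ weakly. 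After first verifying the identity for continuous $\varphi$ of compact support, a standard monotone class argument upgrades it to arbitrary measurable non-negative $\varphi$.

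The main obstacle is the careful treatment of the boundary of $\dom u$: the part of $\partial K^u$ lying over $\partial\dom u$ can carry mass of $S_n(K^u,\cdot)$, but its outer normals all satisfy $\langle\nu,e_{n+1}\rangle=0$ and hence lie outside $\s_-^n$, so it contributes nothing to the right-hand side; this matches the fact that $\MAp(u;\cdot)$ integrates only over $\dom u$ and only sees where $\nabla u$ is defined. The fact that $\tilde\varphi$ contains the factor $|\langle\nu,e_{n+1}\rangle|$ (which vanishes precisely on the equator $\s^n\cap e_{n+1}^\perp$) is exactly what makes the identity insensitive to these lateral contributions, and also to non-smooth points where $\partial K^u$ has a ridge, since the set of corresponding normals has $S_n(K^u,\cdot)$-measure concentrated on lower-dimensional strata that the approximation argument controls.
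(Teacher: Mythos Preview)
The paper itself does not prove this lemma; it simply cites \cite{Knoerr_Ulivelli} (``shown in an equivalent form in [Corollary 3.7]''). So you are supplying a proof where the authors chose to quote one.

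Your geometric core is correct and is in fact all that is needed. For \emph{any} $u\in\fconvcd$ (not just $C^2$), the convex function $u$ is locally Lipschitz on $\interior\dom u$, the Gauss map on $\partial K^u$ is defined $\hm^n$-a.e., and $S_n(K^u,\cdot)$ is the push-forward of $\hm^n\llcorner\partial K^u$ under that map. The boundary decomposes, up to an $\hm^n$-null set, into the graph of $u$ over $\interior\dom u$ (normals in $\s_-^n$), the lateral piece over $\partial\dom u$ (horizontal normals), and the top face (normal $e_{n+1}$). Parametrising the graph by $x\mapsto(x,u(x))$ and applying the area formula for locally Lipschitz maps, your cancellation
\[
\tilde\varphi(\nu(x))\sqrt{1+|\nabla u(x)|^2}=\varphi(\nabla u(x))
\]
goes through verbatim, and the monotone-class step to pass from $\varphi\in C_c(\R^n)$ to measurable $\varphi\ge 0$ is routine. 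No smoothness hypothesis and no approximation is required.

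By contrast, your approximation argument, as written, has a genuine gap. You assert that $K^{u_k}\to K^u$ in the Hausdorff metric ``by \eqref{def:ku}'', but \eqref{def:ku} involves the cut-off height $M_{u_k}=\max_{\dom u_k}u_k$, and epi-convergence of $u_k$ to $u$ does not automatically yield $M_{u_k}\to M_u$, nor even that your regularized $u_k$ land in $\fconvcd$ without the unspecified ``restricted and then truncated suitably'' step. Making this route rigorous would require controlling the top heights and the domains simultaneously, which is more work than simply running your graph computation once for general $u$. I would drop the smooth-plus-approximation scaffolding and present the direct argument.
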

 
\begin{corollary}
\label{cor:mixed_ma_s}
If $\varphi\colon \Rn\to [0,\infty)$ is measurable, then
$$\int_{\Rn} \varphi(y) \d\MAp(u_1,\ldots,u_n;y)=\int_{\s_-^n} \tilde{\varphi}(\nu) \d S(K^{u_1},\ldots,K^{u_n},\nu)$$
for $u_1,\ldots,u_n\in \fconvcd$, where $\tilde{\varphi}\colon \s_-^n\to[0,\infty)$ is as in \eqref{eq:tilde_zeta}.
\end{corollary}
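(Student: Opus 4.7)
The plan is to derive Corollary \ref{cor:mixed_ma_s} from the non-mixed identity in Lemma \ref{thm:ma_sn} by a standard polarization argument. The only subtle point is that the body $K^u$ associated with an epi-sum $u$ does not coincide with the Minkowski sum of the $K^{u_i}$, but differs from it by a vertical segment along $e_{n+1}$, as recorded in \eqref{eq:sum_ku}. The core of the proof consists in showing that this extra segment is invisible to the mixed area measure once it is restricted to the lower half-sphere $\s_-^n$.

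Fix $m\in\N$, $u_1,\ldots,u_m\in\fconvcd$ and $\lambda_1,\ldots,\lambda_m\ge 0$, and put
\[
u=(\lambda_1\sq u_1)\infconv\cdots\infconv(\lambda_m\sq u_m).
\]
Since $\dom u=\lambda_1\dom u_1+\cdots+\lambda_m\dom u_m$ is compact, $u\in\fconvcd$, so that Lemma \ref{thm:ma_sn} yields
\[
\int_{\Rn}\varphi(y)\d\MAp(u;y)=\int_{\s_-^n}\tilde\varphi(\nu)\d S_n(K^u,\nu).
\]
By \eqref{eq:cma_poly_exp} the left-hand side expands as
\[
\sum_{i_1,\ldots,i_n=1}^m\lambda_{i_1}\cdots\lambda_{i_n}\int_{\Rn}\varphi(y)\d\MAp(u_{i_1},\ldots,u_{i_n};y).
\]

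For the right-hand side, I would invoke \eqref{eq:sum_ku}, which furnishes some $c\ge 0$ with $K^u+[o,c\,e_{n+1}]=\lambda_1 K^{u_1}+\cdots+\lambda_m K^{u_m}$. Multilinearity and symmetry of the mixed area measure in $\R^{n+1}$ then give on the one hand
\[
S_n\!\bigl(\lambda_1 K^{u_1}+\cdots+\lambda_m K^{u_m},\cdot\bigr)=\sum_{i_1,\ldots,i_n=1}^m\lambda_{i_1}\cdots\lambda_{i_n}\,S\bigl(K^{u_{i_1}},\ldots,K^{u_{i_n}},\cdot\bigr),
\]
and on the other hand
\[
S_n\!\bigl(K^u+[o,c\,e_{n+1}],\cdot\bigr)=\sum_{k=0}^n\binom{n}{k}c^{n-k}\,S\bigl(K^u[k],[o,e_{n+1}][n-k],\cdot\bigr).
\]
Here lies the main (and really the only) obstacle: the artificial segment $[o,c\,e_{n+1}]$ should not contribute. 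This is what Lemma \ref{le:mx_area_meas_proj} supplies, applied in the ambient space $\R^{n+1}$ with distinguished direction $e_{n+1}$: every summand with $n-k\ge 1$ contains at least one factor $[o,e_{n+1}]$ and therefore vanishes on $\s_-^n$. Consequently $S_n(K^u,\cdot)$ and $S_n(\lambda_1 K^{u_1}+\cdots+\lambda_m K^{u_m},\cdot)$ coincide on $\s_-^n$.

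Because $\tilde\varphi$ is supported on $\s_-^n$, combining the above identities produces
\[
\sum_{i_1,\ldots,i_n=1}^m\lambda_{i_1}\cdots\lambda_{i_n}\!\int_{\Rn}\!\varphi\,\d\MAp(u_{i_1},\ldots,u_{i_n};\cdot)=\sum_{i_1,\ldots,i_n=1}^m\lambda_{i_1}\cdots\lambda_{i_n}\!\int_{\s_-^n}\!\tilde\varphi\,\d S(K^{u_{i_1}},\ldots,K^{u_{i_n}},\cdot).
\]
Both sides are polynomials in $(\lambda_1,\ldots,\lambda_m)$ whose coefficients are symmetric in $(u_{i_1},\ldots,u_{i_n})$, and both $\MAp(u_{i_1},\ldots,u_{i_n};\cdot)$ and $S(K^{u_{i_1}},\ldots,K^{u_{i_n}},\cdot)$ are themselves symmetric in their arguments. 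Specialising to $m=n$ and comparing the coefficient of $\lambda_1\cdots\lambda_n$ (which equals $n!$ times the desired quantity on each side) yields the claimed identity.
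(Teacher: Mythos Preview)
Your proof is correct and follows essentially the same polarization argument as the paper: apply Lemma~\ref{thm:ma_sn} to the epi-sum, expand both sides via \eqref{eq:cma_poly_exp} and \eqref{eq:def_mixed_s}, use \eqref{eq:sum_ku} together with Lemma~\ref{le:mx_area_meas_proj} (in $\R^{n+1}$) to remove the vertical segment on $\s_-^n$, and compare coefficients. Your treatment is slightly more explicit in writing out the binomial expansion of $S_n(K^u+[o,c\,e_{n+1}],\cdot)$, but the substance is the same.
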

\begin{proof}
By \eqref{eq:cma_poly_exp} we have
\begin{multline*}
\int_{\Rn} \varphi(y) \d\MAp\big((\lambda_1 \sq u_1) \infconv \cdots \infconv (\lambda_m \sq u_m); y\big)\\
= \sum_{i_1,\ldots,i_n=1}^m \lambda_{i_1}\cdots \lambda_{i_n} \int_{\Rn} \varphi(y) \d\MAp(u_{i_1},\ldots,u_{i_n};y)
\end{multline*}
for $u_1,\ldots,u_m\in\fconvcd$, $\lambda_1,\ldots,\lambda_m\geq 0$, and $m\in\N$. On the other hand, we obtain from Lemma~\ref{thm:ma_sn}, \eqref{eq:sum_ku} and \eqref{eq:def_mixed_s} in dimension $n+1$, that
\begin{align*}
\int_{\Rn} \varphi(y) \d\MAp\big((\lambda_1 \sq u_1) \infconv \cdots &\infconv (\lambda_m \sq u_m); y\big)\\
&= \int_{\s_-^n} \tilde{\varphi}(\nu) \d S_n(K^{(\lambda_1 \sq u_1) \infconv \cdots \infconv (\lambda_m \sq u_m)}, \nu)\\
&= \int_{\s_-^n} \tilde{\varphi}(\nu) \d S_n(\lambda_1 K^{u_1} + \cdots + \lambda_m K^{u_m}, \nu)\\
&= \sum_{i_1,\ldots,i_n=1}^m \lambda_{i_1}\cdots \lambda_{i_n} \int_{\s_-^n} \tilde{\varphi}(\nu) \d S(K^{u_{i_1}},\ldots,K^{u_{i_n}},\nu).
\end{align*}
Here we have used that Lemma~\ref{le:mx_area_meas_proj} implies that for every $K\in \K^{n+1}$ and $c\geq 0$ the area measure $S_n(K,\cdot)$ coincides with $S_n(K+[o,c\,e_{n+1}],\cdot)$ on $\s_-^n$. The result now follows after comparing coefficients.
\end{proof}

We want to extend results, the proof of which is based on Corollary \ref{cor:mixed_ma_s}, from $\fconvcd$ to $\fconvs$. In order to do so, we will show that if we integrate a measurable $\varphi$ with compact support on $\Rn$ with respect to $\MAp(u_1,\ldots,u_n;\cdot)$ with $u_1,\ldots,u_n\in\fconvs$, then we may replace these functions by elements from $\fconvcd$ without changing the value of the integral. For this, we need to introduce a tool first. For $w\in\fconv$ and $r>0$, the \emph{Lipschitz regularization} $\reg_r w$ of $w$    is defined by
\begin{equation}
\label{eq:lip_reg}
\reg_{r} w(x) = (w^*+\ind_{1/r\,B^n})^*(x) =(w \infconv (h_{1/r \, B^n}))(x) 
\end{equation}
for $x\in\Rn$, which is an element of $\fconvf$. The name is justified since 
$$
\reg_{r} w(x)=
\sup \left\{a(x) : a \text{ is affine on } \Rn, a \leq w, |\nabla a|\leq \tfrac 1r \right\},$$
as pointed out in \cite[Section 4]{Colesanti-Ludwig-Mussnig-3}.  
It is a consequence of \cite[Proposition 4.1 (vi)]{Colesanti-Ludwig-Mussnig-3} that for given $m>0$ and $w\in\fconv$, such that $\dom(w)$ contains an open neighborhood of $m\,B^n$, we have
\begin{equation}
\label{eq:lip_reg_equiv}
\reg_{r} w \equiv w \quad \text{and} \quad \partial \reg_{r} w \equiv \partial w \quad \text{on } m\,B^n
\end{equation}
whenever $1/r$ is greater than the Lipschitz constant of $w$ on a neighborhood of $m\, B^n$. Note that every $w\in\fconv$ is locally Lipschitz on the interior of its domain (see, for example, \cite[Theorem 1.5.3]{Schneider_CB}).

\begin{lemma}
\label{le:fconvs_fconvcd}
For every $u_1,\dots,u_n \in \fconvs$ and measurable $\varphi\colon \Rn\to [0,\infty)$ with compact support, there exist $\bar{u}_1,\dots,\bar{u}_n \in \fconvcd$ such that
\begin{equation}
\label{eq:int_u_u_bar}
\int_{\Rn} \varphi(y) \d \MAp(u_1,\ldots,u_n;y)=\int_{\Rn} \varphi(y) \d \MAp(\bar{u}_1,\ldots,\bar{u}_n;y).
\end{equation}
In addition,
\begin{align*}
\int_E \varphi(y_E) & \d\MAp_E(\proj_E u_1,\ldots,\proj_E u_j;y_E)\\
&= \int_{E} \varphi(y_E) \d\MAp_E(\proj_E \bar{u}_1,\ldots,\proj_E \bar{u}_j;y_E)
\end{align*}
for $E\in\Grass{j}{n}$ with $1\leq j <n$.
\end{lemma}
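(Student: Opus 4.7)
The natural strategy is to work on the conjugate side using Lipschitz regularization. Let $m>0$ be such that $\supp \varphi \subseteq m B^n$, and set $v_i := u_i^* \in \fconvf$. Since each $v_i$ is finite-valued and convex on $\Rn$, it is locally Lipschitz, so one can choose $R>0$ larger than the Lipschitz constants of all the $v_i$ on some fixed open neighborhood of $m B^n$. Define
$$
\bar v_i := \reg_{1/R} v_i = v_i \infconv h_{R B^n} \in \fconvf
$$
and $\bar u_i := \bar v_i^*$. By \eqref{eq:lip_reg} combined with \eqref{eq:sum_conj}, one has $\bar u_i = u_i + \ind_{R B^n}$, which is super-coercive with bounded domain, so (after a minor closure adjustment ensuring the domain is actually compact) $\bar u_i \in \fconvcd$. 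By \eqref{eq:lip_reg_equiv}, the essential property is
$$
\bar v_i \equiv v_i \quad \text{and} \quad \partial \bar v_i \equiv \partial v_i \quad \text{on } m B^n.
$$

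For the first equality \eqref{eq:int_u_u_bar}, expand both $\MA(v_1,\dots,v_n;\cdot)$ and $\MA(\bar v_1,\dots,\bar v_n;\cdot)$ via the polarization formula \eqref{eq:polarization_formula}. This reduces the claim to showing
$$
\int_{\Rn}\varphi\d\MA(v_{i_1}+\cdots+v_{i_k};\cdot) = \int_{\Rn}\varphi\d\MA(\bar v_{i_1}+\cdots+\bar v_{i_k};\cdot)
$$
for every subset $\{i_1,\dots,i_k\} \subseteq \{1,\dots,n\}$. Since $v_i \equiv \bar v_i$ on $m B^n$, the corresponding sums agree on $m B^n$, and by Lemma~\ref{le:sum_subdiff} together with the agreement of the individual subdifferentials, the subdifferentials of these sums at every $x \in m B^n$ coincide. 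The definition \eqref{eq:def_MA} then gives $\MA(v_{i_1}+\cdots+v_{i_k};B) = \MA(\bar v_{i_1}+\cdots+\bar v_{i_k};B)$ for every Borel $B\subseteq m B^n$, which suffices since $\supp \varphi\subseteq m B^n$.

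For the restriction identity, apply Lemma~\ref{le:proj_conj_restr} to rewrite $\MAp_E(\proj_E u_1,\dots,\proj_E u_j;\cdot) = \MA_E(v_1\vert_E,\dots,v_j\vert_E;\cdot)$ and similarly for the $\bar u_i$. Because $v_i \equiv \bar v_i$ on $m B^n$, the restrictions $v_i\vert_E$ and $\bar v_i\vert_E$ agree on $m B^n\cap E$. For a finite-valued convex function $v$ on $\Rn$ one has the standard restriction formula $\partial(v\vert_E)(x) = \proj_E \partial v(x)$ at each $x\in E$ (valid since every point of $E$ lies in $\interior \dom v$), where the subdifferential on the left is taken in the ambient space $E$. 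Hence the subdifferentials in $E$ of $v_i\vert_E$ and $\bar v_i\vert_E$ agree on $m B^n\cap E$, and the polarization argument applied within $E$ yields the second equality.

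The main obstacle is the verification that $\bar u_i \in \fconvcd$: since $\dom u_i$ need not be closed, $\dom u_i\cap R B^n$ may fail to be compact. One can overcome this either by choosing $R$ adapted to $\dom u_i$ so that $R B^n \cap \overline{\dom u_i}\subseteq \dom u_i$, or by replacing $\bar u_i$ with its lsc hull defined on the closed set $\overline{\dom u_i}\cap R B^n$; neither operation alters $\bar u_i$ or its subdifferential on a neighborhood of $\supp \varphi$, so the integrals above are undisturbed.
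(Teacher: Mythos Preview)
Your strategy matches the paper's: pass to the conjugate side, use Lipschitz regularization so that $\bar v_i$ agrees with $v_i$ (and their subdifferentials agree) on $mB^n$, then invoke the polarization formula \eqref{eq:polarization_formula} together with Lemma~\ref{le:sum_subdiff}; for the projection statement you use the restriction identity $\partial(v\vert_E)(x)=\proj_E\partial v(x)$, which the paper records separately as Lemma~\ref{lem:projpartial}. All of this is correct.

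The gap is exactly the one you flag at the end, and neither of your proposed patches closes it. With $\bar u_i=u_i+\ind_{RB^n}$ one has $\dom\bar u_i=\dom u_i\cap RB^n$, and this can fail to be closed for \emph{every} choice of $R$: take $n=1$ and $u(x)=-\log x-\log(1-x)$ on $(0,1)$ (extended by $+\infty$), so that $\dom u\cap[-R,R]$ is never closed regardless of how $R$ is tuned. Thus fix~(1) cannot be made compatible with the lower bound on $R$. Fix~(2) is vacuous: $\bar u_i$ is already lower semicontinuous, so taking an lsc hull changes nothing and the domain remains non-closed. The paper's remedy is to insert a second regularization before cutting off: it sets
\[
\bar u_i=(\reg_{1/(m+2)}u_i)+\ind_{(1/r_i)B^n}.
\]
Because $\reg_{1/(m+2)}u_i=u_i\infconv h_{(m+2)B^n}$ is finite on all of $\Rn$, the domain of $\bar u_i$ is exactly the closed ball $(1/r_i)B^n$, hence compact. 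On the conjugate side $\bar u_i^*=\reg_{r_i}(u_i^*+\ind_{(m+2)B^n})$, which still agrees with $u_i^*$ on $(m+1)B^n$ by \eqref{eq:lip_reg_equiv}, so the remainder of your argument goes through unchanged.
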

\begin{proof}
Let $m>0$ be such that $\supp \varphi \subseteq m\,B^n$ and choose $r_i>0$ such that $1/r_i$ is greater than the Lipschitz constant of $u_i^*$ on an open neighborhood of $(m+1)\, B^n$ for $1\leq i\leq n$ (recall that $u_i^*\in\fconvf$). We define $\bar{u_i}\in\fconvcd$ as $\bar{u_i}=(\reg_{1/(m+2)} u_i)+\ind_{1/r_i\, B^n}$.  By \eqref{eq:sum_conj} and \eqref{eq:lip_reg} we have
$$\bar{u_i}^* = \reg_{r_i} (u_i^*+\ind_{(m+2)B^n})\in\fconvf$$
for $1\leq i\leq n$. By our choice of $r_i$ it now follows from \eqref{eq:lip_reg_equiv} that 
\begin{equation}
\label{eq:bar_u_reg_u_equal}
\bar{u}_i^*=\reg_{r_i} (u_i^* + \ind_{(m+2)\,B^n})  = u_i^*
\end{equation}
and
$$
\partial \bar{u}_i^*=\partial (u_i^* + \ind_{(m+2)\,B^n})  = \partial u_i^*
$$
on $(m+1)\, B^n$ and together with Lemma~\ref{le:sum_subdiff} we thus obtain
\begin{equation*}
\partial (\bar{u}_{i_1}^* + \cdots +\bar{u}_{i_k}^*) \equiv \partial (u_{i_1}^*+\cdots + u_{i_k}^*) \quad \text{on } (m+1)\, B^n
\end{equation*}
for $1\leq i_1 < \cdots < i_k \leq n$ and $1\leq k\leq n$.
Hence, by \eqref{eq:def_MA}, \eqref{eq:polarization_formula}, and \eqref{eq:def_mixed_map}, we have
$$
\MAp(u_1,\ldots,u_n;\cdot)=\MA(u_1^*,\ldots,u_n^*;\cdot)=\MA(\bar{u}_1^*,\ldots,\bar{u}_n^*;\cdot)=\MAp(\bar{u}_1,\ldots,\bar{u}_n;\cdot)
$$
on $m\,B^n$, which gives \eqref{eq:int_u_u_bar}.

For the statement regarding the projections, we fix a linear subspace $E\in\Grass{j}{n}$ and denote by $\partial_E$ the subdifferential with respect to the ambient space $E$. Equation \eqref{eq:bar_u_reg_u_equal} implies $\bar{u}_i^*\vert_E=u_i^*\vert_E$ on $(m+1)B^n\cap E$, and therefore $\partial_E(\bar{u}_i^*\vert_E)=\partial_E(u_i^*\vert_E)$ on (an open neighborhood of) $mB^n\cap E$, for $i=1,\ldots,n$. Hence, an application of Lemma~\ref{le:sum_subdiff} in $E$ shows that 
$$
\partial_E (\bar{u}_{i_1}^*\vert_E + \cdots +\bar{u}_{i_k}^*\vert_E) \equiv \partial_E (u_{i_1}^*\vert_E+\cdots + u_{i_k}^*\vert_E) \quad \text{on } m\, B^n \cap E,
$$
for $1\leq i_1 < \cdots < i_k \leq n$ and $1\leq k\leq j$. Together with Lemma~\ref{le:proj_conj_restr} the proof is now analogous to the considerations above.
\end{proof}

\subsection{Behavior under Orthogonal Projections}
\label{se:behavior_proj}
We provide results for conjugate mixed Monge--Amp\`ere measures for orthogonal projections of super-coercive convex functions. Equivalently (see Lemma \ref{le:proj_conj_restr}), this means that we consider mixed Monge--Amp\`ere measures of restrictions of finite-valued convex functions to linear subspaces.

The results of this section can be obtained from Lemma~\ref{le:mx_area_meas_proj} together with Corollary~\ref{cor:mixed_ma_s}. However, we think that the proofs below are of independent interest. In particular, this allows for direct proofs of Theorem~\ref{thm:ck_ma} and Theorem~\ref{thm:ma_supp} that do not rely on results for area measures of convex bodies (see Remark~\ref{re:direct_proof}).

In the proof of Lemma \ref{le:mx_ma_meas_restr} below, the next auxiliary result is used. It is a straightforward consequence of \cite[Theorem 3.3]{EG2015} and Rademacher's theorem.

\begin{lemma}\label{lem:EG15}
If $U\subseteq \R^n$ is open and $g,h\colon U\to\R$ are locally Lipschitz, then, for almost all $x\in \{z\in U: g(z)=h(z)\}$, $g$ and $h$ are differentiable at $x$ and $\nabla g(x)=\nabla h(x)$.    
\end{lemma}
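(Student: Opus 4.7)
The plan is to apply Rademacher's theorem and Evans--Gariepy's Theorem 3.3 to the difference $f=g-h$. Specifically, EG Theorem 3.3 (in its standard formulation) asserts that if $f\colon U\to\R$ is locally Lipschitz and $N=\{x\in U:f(x)=0\}$, then $f$ is differentiable at $\hm^n$-almost every $x\in N$ and $\nabla f(x)=0$ there. This is exactly the engine that turns the constraint $g=h$ into the gradient identity.

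Concretely, I would proceed as follows. First, set $f=g-h$ on $U$; this is locally Lipschitz as the difference of two locally Lipschitz functions. By Rademacher's theorem applied separately to $g$ and to $h$, there exist null sets $N_g, N_h\subset U$ off which $g$ and $h$ are respectively differentiable. Next, apply EG Theorem 3.3 to $f$: there is a null set $N_f\subset\{x\in U: f(x)=0\}=\{g=h\}$ such that at every $x\in\{g=h\}\setminus N_f$ the function $f$ is differentiable with $\nabla f(x)=0$. Setting $N=N_g\cup N_h\cup N_f$, which has $\hm^n$-measure zero, we obtain that for every $x\in \{g=h\}\setminus N$ both $g$ and $h$ are differentiable at $x$ and $\nabla g(x)-\nabla h(x)=\nabla f(x)=0$, which is the desired conclusion.

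There is essentially no obstacle here once the two cited results are in hand: the only point requiring (minimal) care is to observe that each of the three exceptional sets has $n$-dimensional Lebesgue measure zero in $U$, so their union is null and, restricted to the level set $\{g=h\}$, still negligible. Hence \emph{almost every} $x$ in $\{g=h\}$ lies outside $N$, which is exactly what the lemma claims. The role of Rademacher's theorem is purely to upgrade the differentiability of $f$ at such $x$ into simultaneous differentiability of $g$ and $h$ individually, so that the identity $\nabla g(x)=\nabla h(x)$ makes sense and follows from linearity of the gradient.
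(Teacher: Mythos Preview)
Your proposal is correct and follows exactly the route the paper indicates: the paper states that the lemma ``is a straightforward consequence of \cite[Theorem 3.3]{EG2015} and Rademacher's theorem'' without spelling out details, and you have supplied precisely the intended argument by applying EG Theorem~3.3 to $f=g-h$ and Rademacher to $g$ and $h$ separately.
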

 
\begin{lemma}
\label{le:mx_ma_meas_restr}
If $e\in\sn$ and $E=e^\perp$, then
$$n \MAp(u[n-1],\ind_{[o,e]};B)=\MAp_E(\proj_{E} u;B\cap E)$$
for $u\in\fconvs$ and Borel sets $B\subseteq \Rn$.
\end{lemma}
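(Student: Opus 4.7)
My strategy is to transfer the identity to the setting of area measures of convex bodies in $\R^{n+1}$, where the analog (Lemma~\ref{le:mx_area_meas_proj}) is already available. The bridge between (conjugate) mixed Monge--Amp\`ere measures and mixed area measures is provided by Corollary~\ref{cor:mixed_ma_s} together with the body $K^u$ defined in \eqref{def:ku}. The key observation is that $\ind_{[o,e]}\in\fconvcd$ with $M_{\ind_{[o,e]}}=0$, so $K^{\ind_{[o,e]}}=[o,e]\times\{0\}$, which we identify with the segment $[o,e]\subset\R^{n+1}$.

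First, using Lemma~\ref{le:fconvs_fconvcd} applied with a nonnegative continuous $\varphi$ of compact support, I would reduce to the case $u\in\fconvcd$. Then Corollary~\ref{cor:mixed_ma_s} yields
$$n\int_{\Rn}\varphi(y)\,\d\MAp(u[n-1],\ind_{[o,e]};y)=n\int_{\s^n_-}\tilde\varphi(\nu)\,\d S(K^u[n-1],[o,e];\nu),$$
with $\tilde\varphi$ as in \eqref{eq:tilde_zeta}. Applying Lemma~\ref{le:mx_area_meas_proj} in $\R^{n+1}$, with $e$ (extended by a vanishing last coordinate) as the distinguished unit vector and $E'=e^\perp\cap\R^{n+1}=E\oplus\R e_{n+1}$ as the orthogonal hyperplane, the right-hand side becomes
$$\int_{\s^n_-\cap E'}\tilde\varphi(\nu)\,\d S_{E'}(\proj_{E'}K^u[n-1];\nu).$$

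The next step is to identify $\proj_{E'}K^u$. A direct computation from the definition $K^u=\epi u\cap\{z_{n+1}\le M_u\}$ gives
$$\proj_{E'}K^u=\{(y_E,t)\in E\oplus\R\colon \proj_E u(y_E)\le t\le M_u\}=K^{\proj_E u}+[o,(M_u-M_{\proj_E u})e_{n+1}],$$
where the inequality $M_{\proj_E u}\le M_u$ uses $\proj_E u(y_E)=\min_{s}u(y_E+se)\le u(y)$ for any $y$ whose projection onto $E$ is $y_E$. Working now in $E'$ (of dimension $n$) and expanding $S_{E'}(K^{\proj_E u}+[o,c\,e_{n+1}]\,[n-1];\cdot)$ via multilinearity, the "in particular" part of Lemma~\ref{le:mx_area_meas_proj} applied within $E'$ kills every term that contains at least one factor $[o,e_{n+1}]$ on the lower hemisphere $\s^{n-1}_{E',-}=\s^n_-\cap E'$; hence
$$S_{E'}(\proj_{E'}K^u[n-1];\cdot)=S_{n-1}(K^{\proj_E u};\cdot)\quad\text{on }\s^{n-1}_{E',-}.$$

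Finally, I would re-apply Corollary~\ref{cor:mixed_ma_s} in the ambient space $E'$, with $\proj_E u\in\fconvcd(E)$ playing the role of $u$. Noting that the gnomonic projection restricted to $\s^{n-1}_{E',-}$ agrees with the gnomonic projection intrinsic to $E'$ and maps into $E$, this converts the area-measure integral back into
$$\int_E\varphi(y_E)\,\d\MAp_E(\proj_E u;y_E).$$
Chaining the equalities proves the identity for continuous test functions with compact support, and the statement for Borel sets $B\subseteq\R^n$ follows by a standard monotone class argument (both sides being Radon measures on bounded Borel sets). The main technical obstacle is the bookkeeping in the second step --- correctly identifying $\proj_{E'}K^u$ and absorbing the vertical segment via the lower-hemisphere vanishing property --- while the use of Lemma~\ref{le:fconvs_fconvcd} to pass from $\fconvs$ to $\fconvcd$ and the compatibility of the two gnomonic projections require only routine verification.
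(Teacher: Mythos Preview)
Your argument is correct and is, in fact, precisely the route the authors explicitly acknowledge at the beginning of Section~\ref{se:behavior_proj}: ``The results of this section can be obtained from Lemma~\ref{le:mx_area_meas_proj} together with Corollary~\ref{cor:mixed_ma_s}.'' The only place that needs a word of care is the reduction via Lemma~\ref{le:fconvs_fconvcd}: as stated, that lemma produces modified functions $\bar u_1,\ldots,\bar u_n$, whereas you need to keep the last entry equal to $\ind_{[o,e]}$. This is harmless, since $\ind_{[o,e]}\in\fconvcd$ already and the proof of Lemma~\ref{le:fconvs_fconvcd} only uses that $\bar u_i^*=u_i^*$ on a large ball, which holds trivially if you leave $\ind_{[o,e]}$ unchanged.

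The paper, however, deliberately argues differently. It works directly with the pushforward representation \eqref{eq:def_map}: for $\varepsilon>0$ it computes $(u\infconv\ind_{[o,\varepsilon e]})$ explicitly as a ``shift--and--plateau'' modification of $u$, applies Fubini along the line $\lin\{e\}$, and uses Lemma~\ref{lem:EG15} to identify the gradient almost everywhere on the three pieces. This yields
\[
\MAp(u\infconv\ind_{[o,\varepsilon e]};B)=\MAp(u;B)+\varepsilon\,\MAp_E(\proj_E u;B\cap E),
\]
and the lemma follows by differentiating at $\varepsilon=0$ via \eqref{eq:cma_poly_exp}. The point of this longer, purely analytic argument is independence: it never appeals to area measures of convex bodies, and therefore supports the ``direct'' functional route sketched in Remark~\ref{re:direct_proof}. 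Your approach is cleaner and more conceptual, but it makes the functional statement a corollary of the body-level Lemma~\ref{le:mx_area_meas_proj}, which is exactly the dependence the authors wished to avoid here.
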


\begin{proof}
Throughout the following, we fix an arbitrary Borel set $B\subseteq \Rn$ and $u\in\fconvs$. Without loss of generality we set $e=e_n$ and thus $\operatorname{span}\{e_1,\ldots,e_{n-1}\}=E$. We set $D_u=\proj_E \dom(u)= \dom (\proj_E u)$. 

For $x_E\in D_u$ the set $\argmin (r\mapsto u(x_E+re))$ is a compact interval in $\R$. We define $m_u(x_E)$ as its midpoint, which gives a Borel measurable map $m_u\colon D_u\to \R$ (we postpone the proof of measurability to Lemma~\ref{Le:mb}). Let $\varepsilon>0$. For $x_E\in D_u$ and $r\in\R$ we obtain
\begin{align*}(u\infconv \ind_{[o,\varepsilon e]})(x_E +re)&=\inf\{u(x_E+re-z)+\ind_{[o,\varepsilon e]}(z) : z\in \Rn \}\\
&=\inf\{u(x_E+(r-s)e) : s\in [o,\varepsilon]\}\\
&=\begin{cases} u(x_E+re),\quad &\text{if } r \leq m_u(x_E),\\
\proj_E u(x_E),\quad &\text{if } m_u(x_E) < r < m_u(x_E) +\varepsilon,\\
u(x_E+(r-\varepsilon)e),\quad &\text{if } m_u(x_E)+\varepsilon \leq r,
\end{cases}\end{align*}
where $\proj_E u(x_E)=\min_{y\in \lin\{e\}} u(x_E+y)$. 
Observe that $\proj_E \dom(u\infconv \ind_{[o,\varepsilon e]})=D_u$. Hence, by Fubini's theorem,  
\begin{align}
\label{eq:ma_infconv_fubini}
&\MAp(u\infconv \ind_{[o,\varepsilon e]};B)\nonumber\\
&= \int_{\dom (u\infconv \ind_{[o,\varepsilon e]})} \chi_B(\nabla (u\infconv \ind_{[o,\varepsilon e]})(x)) \d x\nonumber\\
&= \int_{D_u} \int_{\dom (u\infconv \ind_{[o,\varepsilon e]}) \cap (x_E + \lin\{e\})}\chi_B(\nabla (u\infconv \ind_{[o,\varepsilon e]})(x_E+y)) \d y \d x_E.
\end{align}
Let $U=\interior\dom (u\infconv \ind_{[o,\varepsilon e]})\cap \interior\dom (u)$. The functions $u\infconv \ind_{[o,\varepsilon e]}$ and $u$ are locally Lipschitz on $U$. For $A=\{x_E+re\in U:r\le m_u(x_E)\}$, we obtain 
$$A\subseteq \{x_E+re\in U:
(u\infconv \ind_{[o,\varepsilon e]})(x_E+re)=u(x_E+re)\}.$$
Lemma \ref{lem:EG15} yields that 
$\nabla (u\infconv \ind_{[o,\varepsilon e]})(x_E +re) = \nabla u(x_E+re)$ 
for almost all $x_E+re\in A$. Since the boundary of $\dom(u)$ has Lebesgue measure zero and 
$$\{x_E+re\in   \interior\dom (u\infconv \ind_{[o,\varepsilon e]})\setminus \dom(u):r\le m_e(x_E)\}=\emptyset,$$
it follows that 
$$\nabla (u\infconv \ind_{[o,\varepsilon e]})(x_E +re) = \nabla u(x_E+re)$$
for almost all $x_E+re\in  \interior\dom (u\infconv \ind_{[o,\varepsilon e]})$ such that $r\le m_e(x_E)$. 

In the same way, we obtain that 
$$\nabla (u\infconv \ind_{[o,\varepsilon e]})(x_E +re) = \nabla u(x_E+(r-\varepsilon)e)$$
holds for almost all $x_E+re\in  \interior\dom (u\infconv \ind_{[o,\varepsilon e]})$ such that $m_u(x_E)+\varepsilon \leq r$.

Moreover, on $(m_u(x_E),m_u(x_E) +\varepsilon)$, the function
$$r\mapsto (u\infconv \ind_{[o,\varepsilon e]})(x_E +re)$$
is constant. Therefore, applying once again Lemma \ref{lem:EG15},
$$\nabla (u\infconv \ind_{[o,\varepsilon e]})(x_E +re) = \begin{pmatrix}\nabla_{E} \proj_E u(x_E) \\ 0\end{pmatrix}$$
holds for almost all $x_E+re\in  \interior\dom (u\infconv \ind_{[o,\varepsilon e]})$ such that $r\in (m_u(x_E),m_u(x_E)+\varepsilon)$.

As a consequence, we obtain
\begin{align}
\label{eq:int_dom_split}
&\int_{\dom (u\infconv \ind_{[o,\varepsilon e]}) \cap (x_E + \lin\{e\})}\chi_B(\nabla (u\infconv \ind_{[o,\varepsilon e]})(x_E+y)) \d y \nonumber\\
&= \int_{\dom(u) \cap (x_E + \lin\{e\})} \chi_B(\nabla u(x_E+y)) \d y  + \varepsilon \chi_B\left(\begin{pmatrix}\nabla_{E} \proj_E u(x_E) \\ 0\end{pmatrix}\right)
\end{align}
for $x_E\in E$. We rewrite
$$ 
\chi_B\left(\begin{pmatrix}\nabla_{x_E}\proj_E u(x_E)\\ 0\end{pmatrix}\right) = \chi_{B\cap E}(\nabla_E \proj_E u(x_E)).
$$ 
Therefore it follows from \eqref{eq:ma_infconv_fubini} and \eqref{eq:int_dom_split} that
\begin{align*}
&\MAp(u\infconv \ind_{[o,\varepsilon e]};B)\\
&= \int_{D_u} \Big(\int_{\dom(u) \cap (x_E + \lin\{e\})} \chi_B(\nabla u(x_E+y))\d y
 + \varepsilon \chi_{B\cap E}(\nabla_E \proj_E u(x_E))\Big) \d x_E\\
&=\int_{\dom u} \chi_B(\nabla u(x)) \d x + \varepsilon \int_{\dom(\proj_E u)} \chi_{B\cap E}(\nabla_E \proj_E u(x_E))\d x_E\\
&=\MAp(u;B) + \varepsilon\MAp_E(\proj_E u;B\cap E).
\end{align*}
Hence it follows from \eqref{eq:cma_poly_exp} that
\begin{align*}
n \MAp(u[n-1],\ind_{[o,e]};B)&= \lim\nolimits_{\varepsilon\to 0^+}\frac{\MAp(u\infconv \ind_{[o,\varepsilon e]};B)-\MAp(u;B)}{\varepsilon}\\
&=\MAp_E(\proj_E u;B\cap E),
\end{align*}
which concludes the proof.
\end{proof}

\begin{lemma}\label{Le:mb}
    The map $m_u\colon D_u\to\R$ is measurable.
\end{lemma}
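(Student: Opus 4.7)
The plan is to reduce the claim to Borel measurability of the two endpoint functions
\[
a(x_E)=\min\argmin_{r\in\R}u(x_E+re),\qquad b(x_E)=\max\argmin_{r\in\R}u(x_E+re),
\]
since $m_u=(a+b)/2$. These are well defined on $D_u$ because, for each $x_E\in D_u$, the restriction $f_{x_E}(r):=u(x_E+re)$ is convex and l.s.c.\ on $\R$ and super-coercive (inherited from $u\in\fconvs$), so it attains its minimum $\proj_E u(x_E)$ on a non-empty compact interval $[a(x_E),b(x_E)]$. A standard argument using super-coercivity and lower semicontinuity of $u$ also shows that $\proj_E u$ is convex and l.s.c.\ on $E$, so both $u$ and $\proj_E u$ are Borel measurable.

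For each $k\in\N$ and each $r\in\mathbb{Q}$, the set
\[
S_{r,k}:=\bigl\{x_E\in D_u:u(x_E+re)\le \proj_E u(x_E)+1/k\bigr\}
\]
is Borel, and the approximants
\[
a_k(x_E):=\inf\{r\in\mathbb{Q}:x_E\in S_{r,k}\},\qquad b_k(x_E):=\sup\{r\in\mathbb{Q}:x_E\in S_{r,k}\}
\]
are Borel measurable on $D_u$, since $\{a_k<q\}=\bigcup_{r\in\mathbb{Q},\,r<q}S_{r,k}$ for every $q\in\R$ and similarly for $b_k$. I would then verify that $a_k\nearrow a$ and $b_k\searrow b$ pointwise. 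By convexity and lower semicontinuity, $I_k(x_E):=\{r\in\R:f_{x_E}(r)\le\proj_E u(x_E)+1/k\}$ is a compact interval $[\alpha_k,\beta_k]$ decreasing to $[a(x_E),b(x_E)]$ as $k\to\infty$. Because convex functions of one variable are continuous on the interior of their domain (with one-sided continuity at boundary endpoints), $I_k$ has positive length for every $k$, so $\mathbb{Q}\cap I_k$ is dense in $I_k$ and thus $a_k=\alpha_k$ and $b_k=\beta_k$. Monotone pointwise limits of measurable functions then yield Borel measurability of $a$, $b$, and hence of $m_u$.

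The main technical subtlety to address is the degenerate case in which the line fiber $\dom f_{x_E}$ collapses to a single point, so that $I_k=\{a(x_E)\}=\{b(x_E)\}$ for every $k$ and the rational approximation may fail to detect it. Such $x_E$ lie on the relative boundary of the convex set $D_u\subseteq E$, which is Lebesgue-negligible, and redefining $m_u$ arbitrarily on this exceptional Borel set does not affect the Fubini-type identity used in the preceding proof of Lemma~\ref{le:mx_ma_meas_restr}. If Borel measurability on all of $D_u$ is desired, one can alternatively view $\Phi(x_E):=\argmin_r f_{x_E}(r)$ as a set-valued map whose graph $\{(x_E,r)\in D_u\times\R:u(x_E+re)=\proj_E u(x_E)\}$ is Borel with non-empty compact interval fibers, and invoke a standard measurable-selection result to conclude directly that $a$ and $b$ are Borel measurable.
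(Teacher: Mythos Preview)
Your route differs from the paper's. The paper works in the Fell hyperspace of closed subsets: it shows that $x_E\mapsto\epi(u)\cap(x_E+\lin\{e,e_{n+1}\})$ and the height functional $F\mapsto\inf\{\langle z,e_{n+1}\rangle:z\in F\}$ are upper semicontinuous, then realizes the argmin segment as a measurable $\Kn$-valued map and composes with the continuous circumcenter map. Your endpoint-function approach via sublevel sets is more elementary and avoids the hyperspace machinery entirely.

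There is, however, a genuine gap in your treatment of the degenerate case. Your claim that single-point fibers $\dom f_{x_E}$ occur only on the relative boundary of $D_u$ is false in general: if $\dom u$ lies in an $(n-1)$-dimensional affine subspace whose direction space does not contain $e$ (for instance $u=\ind_C$ with $C$ a compact segment in $\R^2$ not parallel to $e$, which is certainly in $\fconvs$), then \emph{every} fiber over $D_u$ is a singleton, your rational approximants return $a_k(x_E)=+\infty$ whenever that singleton is irrational, and the exceptional set is all of $D_u$ rather than a Lebesgue-null boundary. Your measurable-selection fallback does rescue the argument, though the bare ``Borel graph'' observation only delivers analytic (hence universally measurable) sublevel sets for $a$ and $b$ via projection. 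A clean repair in your spirit that gives Borel measurability outright and covers all cases uniformly: for each $q\in\R$ the one-sided minimum $\psi_q(x_E):=\min_{r\le q}u(x_E+re)$ is attained (super-coercivity) and l.s.c.\ (a standard compactness argument), hence Borel; then $\{a\le q\}=\{\psi_q\le\proj_E u\}$ is Borel, and symmetrically for $b$.
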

\begin{proof}
Let $\cFn$ denote the space of closed subsets of $\Rn$ together with the Fell topology (see \cite[Section 12.2]{schneider_weil}), measurability refers to the induced Borel $\sigma$-algebra. The map $\cEu\colon D_u\to\cFn$,   defined by 
$$\cEu(x_E)=\epi(u)\cap (x_E+\lin\{e,e_{n+1}\})$$ 
for $x_E\in D_u$, is upper semicontinuous (see \cite[Theorem 12.2.6]{schneider_weil}) and hence measurable. Moreover, the map $h\colon \cFn\to[-\infty,\infty]$, defined by $$h(F)=\inf\{\langle z,e_{n+1}\rangle:z\in F\}$$ for $F\in\cFn$, is upper semicontinuous (and hence measurable). To verify this, we show that 
$\limsup_{i\to\infty}h(F_i)\le h(F)$ whenever $F_i\to F$ in $\cFn$ as $i\to\infty$. If $F= \emptyset$, then $h(F)=\infty$, and there is nothing to show. Now let $F\neq\emptyset$ and $h(F)<\alpha$ for some $\alpha\in\R$. Hence  there is some $z\in F$ such that $\langle z,e_{n+1}\rangle<\alpha$. Since $F_i\to F$, there are $z_i\in F_i$ (if $i$ is large enough) such that $z_i\to z$, hence $\langle z_i,e_{n+1}\rangle<\alpha$ (if $i$ is large enough). This shows that if $i$ is large enough, then $h(F_i)<\alpha$, which proves the claim. 

For $x_E\in D_u$ we define 
$$
\cMu(x_E)=\left(\cEu-h(\cEu(x_E))e_{n+1}\right)\cap \lin\{e\}\in\Kn.
$$
It follows from \cite[Theorem 12.2.6 and Theorem 12.3.1]{schneider_weil}  in combination with the fact that $\Kn$ is a measurable subset of $\cFn$ (see \cite[Theorem 2.4.2]{schneider_weil}), that $\cMu\colon D_u\to\Kn$ is measurable. Writing $c\colon \Kn\to\R^n$ for the (continuous) map which assigns to a convex body the center of its circumscribed ball (see \cite[Lemma 4.1.1]{schneider_weil}), we obtain $m_u=c\circ \cMu$, and hence $m_u$ is measurable as a composition of measurable maps.
\end{proof}

\begin{remark}
For an alternative proof of Lemma~\ref{Le:mb}, we observe first that it follows from standard properties of convex conjugates (see, for example, \cite[Section 11]{RockafellarWets}) that
$$\argmin\nolimits_{r\in E^\perp} u(x_E+r)=\partial_{E^\perp} \big(\proj_{E^\perp} (u^*(\cdot) - \langle \cdot,x_E\rangle \big)(o)$$
for $x_E\in\proj_E \dom(u)$. One can now show that composing the above with the map that assigns to a convex body the center of its circumscribed ball gives a measurable map.
\end{remark}

From Lemma \ref{Le:mb} we deduce a more general version for conjugate mixed Monge--Amp\`ere measures.

\begin{theorem}
\label{theo:mixed_proj_inter}
If $e\in\sn$ and $E=e^\perp$, then
$$n\MAp(u_1,\ldots,u_{n-1},\ind_{[o,e]};B)=\MAp_E(\proj_{E} u_1,\ldots,\proj_{E} u_{n-1};B\cap E)$$
for $u_1,\ldots,u_{n-1}\in \fconvs$ and Borel sets $B\subseteq \Rn$.
\end{theorem}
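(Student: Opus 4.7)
The plan is to obtain Theorem~\ref{theo:mixed_proj_inter} by polarizing Lemma~\ref{le:mx_ma_meas_restr} in a single stroke: the diagonal case $u_1=\cdots=u_{n-1}=u$ is precisely what the preceding lemma provides, so I only need to separate the $n-1$ repeated slots into distinct arguments while keeping the last entry $\ind_{[o,e]}$ fixed.

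Concretely, I would fix $u_1,\dots,u_{n-1}\in\fconvs$ and $\lambda_1,\dots,\lambda_{n-1}\ge 0$, and apply Lemma~\ref{le:mx_ma_meas_restr} to the super-coercive function
$$u=(\lambda_1\sq u_1)\infconv\cdots\infconv(\lambda_{n-1}\sq u_{n-1})\in\fconvs.$$
(The fact that $u\in\fconvs$ follows from $u^*=\lambda_1 u_1^*+\cdots+\lambda_{n-1}u_{n-1}^*\in\fconvf$ via \eqref{eq:sum_conj}.) On the left side of Lemma~\ref{le:mx_ma_meas_restr}, the $n-1$ repeated copies of $u$ can be expanded using the multilinearity of the mixed conjugate Monge--Amp\`ere measure in each slot (a direct consequence of \eqref{eq:cma_poly_exp} obtained by isolating coefficients of a single scaled infimal convolution while holding the remaining entries fixed), yielding
$$n\MAp(u[n-1],\ind_{[o,e]};B)=n\sum_{i_1,\dots,i_{n-1}=1}^{n-1}\lambda_{i_1}\cdots\lambda_{i_{n-1}}\MAp(u_{i_1},\dots,u_{i_{n-1}},\ind_{[o,e]};B).$$

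For the right side, I would invoke Lemma~\ref{le:proj_conv} (which extends immediately to finitely many entries) to commute $\proj_E$ through the infimal convolution, obtaining $\proj_E u=(\lambda_1\sq \proj_E u_1)\infconv\cdots\infconv(\lambda_{n-1}\sq \proj_E u_{n-1})$. Since $\dim E=n-1$, the identity \eqref{eq:cma_poly_exp} applied inside $E$ expands $\MAp_E(\proj_E u;B\cap E)$ as
$$\sum_{i_1,\dots,i_{n-1}=1}^{n-1}\lambda_{i_1}\cdots\lambda_{i_{n-1}}\MAp_E(\proj_E u_{i_1},\dots,\proj_E u_{i_{n-1}};B\cap E).$$
Lemma~\ref{le:mx_ma_meas_restr} forces these two polynomials in $\lambda_1,\dots,\lambda_{n-1}$ to coincide, and comparing the coefficient of the square-free monomial $\lambda_1\lambda_2\cdots\lambda_{n-1}$ (which appears with multiplicity $(n-1)!$ on each side, by the symmetry of $\MAp$ and $\MAp_E$ in their entries) yields
$$n\MAp(u_1,\dots,u_{n-1},\ind_{[o,e]};B)=\MAp_E(\proj_E u_1,\dots,\proj_E u_{n-1};B\cap E),$$
which is the required identity.

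I do not foresee any real obstacle here: the whole proof is a formal polarization, and each ingredient is explicitly available — Lemma~\ref{le:mx_ma_meas_restr} supplies the diagonal case, \eqref{eq:cma_poly_exp} drives the polynomial expansion, and Lemma~\ref{le:proj_conv} takes care of the compatibility of orthogonal projection with infimal convolution and epi-multiplication. The only minor care needed is to verify closure of $\fconvs$ under $(\lambda,u)\mapsto \lambda\sq u$ (including the convention $0\sq u=\ind_{\{o\}}$) and under $\infconv$, both of which are immediate through the duality $(\cdot)^*\colon\fconvs\to\fconvf$ and \eqref{eq:sum_conj}.
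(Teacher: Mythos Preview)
Your proposal is correct and follows essentially the same route as the paper: apply Lemma~\ref{le:mx_ma_meas_restr} to the epi-combination $(\lambda_1\sq u_1)\infconv\cdots\infconv(\lambda_{n-1}\sq u_{n-1})$, use Lemma~\ref{le:proj_conv} to commute $\proj_E$ past the infimal convolution, expand both sides via \eqref{eq:cma_poly_exp}, and compare coefficients. The paper phrases the last step simply as ``multinomial expansion and comparison of coefficients,'' but your explicit extraction of the square-free monomial $\lambda_1\cdots\lambda_{n-1}$ is the same argument with one more line of detail.
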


\begin{proof}
Let $\lambda_1,\ldots,\lambda_{n-1}\ge 0$, and let $u_1,\ldots,u_{n-1}\in \fconvs$ and Borel sets $B\subseteq \Rn$ be given. An application of Lemma \ref{Le:mb} and Lemma \ref{le:proj_conv} yield
\begin{align*}
&n\MAp\left((\lambda_1\sq u_1)\infconv\cdots \infconv (\lambda_{n-1}\sq u_{n-1}),\ind_{[o,e]};B\right)\\
&=\MAp_E\left((\lambda_1\sq \proj_E u_1)\infconv\cdots \infconv (\lambda_{n-1}\sq \proj_E u_{n-1});B\cap E\right).
\end{align*}
Multinomial expansion and comparison of coefficients imply the assertion.
\end{proof}

Iterated application of Theorem \ref{theo:mixed_proj_inter} yields the following result.

\begin{corollary}\label{cor:4.15}
 Let $1\le j\le n-1$. If $z_{j+1},\ldots,z_n\in\sn$ are pairwise orthogonal and $E_j=\lin\{z_{j+1},\ldots,z_n\}^\perp$, then 
$$n!\MAp(u_1,\ldots,u_{j},\ind_{[o,z_{j+1}]},\ldots,\ind_{[o,z_{n}]};B)=j!\MAp_{E_j}(\proj_{E_j} u_1,\ldots,\proj_{E_j} u_{j};B\cap E_j)$$
for $u_1,\ldots,u_{j}\in \fconvs$ and Borel sets $B\subseteq \Rn$. In particular, the measure on the left side is independent of the specific choice of an orthonormal basis $z_{j+1},\ldots,z_n$ of $E_j^\perp$. 
\end{corollary}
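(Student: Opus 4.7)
The plan is to prove the corollary by induction on $k=n-j$, where the base case $k=1$ is exactly Theorem~\ref{theo:mixed_proj_inter}. For the inductive step, I would set $F_0=\Rn$ and $F_i=\lin\{z_{n-i+1},\ldots,z_n\}^\perp$ for $1\le i\le n-j$, so that $F_{n-j}=E_j$, and apply Theorem~\ref{theo:mixed_proj_inter} iteratively, where at the $i$-th step the ambient space is $F_{i-1}$ and the direction vector is $z_{n-i+1}\in F_{i-1}$ (which is possible because the $z_m$ are pairwise orthogonal, so $z_{n-i+1}\perp z_{n-i+2},\ldots,z_n$). Each application reduces the ambient dimension by one and picks up a factor equal to that ambient dimension, so after $n-j$ steps the total factor is $n(n-1)\cdots(j+1)=n!/j!$, which gives the claimed identity after multiplying both sides by $j!$.

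To carry this out cleanly, two easy compatibility facts will be needed. First, for $z_m\in F_i$, the projection $\proj_{F_i}\ind_{[o,z_m]}$ (computed in the ambient space $F_{i-1}\supseteq F_i$, with respect to the orthogonal complement of $F_i$ in $F_{i-1}$) coincides with $\ind_{[o,z_m]}$ viewed as a function on $F_i$. This is because, by definition of $\proj$, one needs to add a vector in $F_i^{\perp_{F_{i-1}}}$ to an $x_{F_i}\in F_i$ and land in the segment $[o,z_m]\subseteq F_i$, which forces the added vector to be zero. Second, if $F_i\subseteq F_{i-1}$ and $u\in\fconvs$, then $\proj_{F_i}\proj_{F_{i-1}}u=\proj_{F_i}u$, because the orthogonal complement of $F_i$ in $\Rn$ decomposes as an internal direct sum of $F_{i-1}^\perp$ and the orthogonal complement of $F_i$ inside $F_{i-1}$, so the two successive infimal convolutions with indicators of these subspaces equal a single infimal convolution with $\ind_{F_i^\perp}$. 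Once these two facts are in place, applying Theorem~\ref{theo:mixed_proj_inter} in $F_{i-1}$ with direction $z_{n-i+1}$ transforms the measure
\[
\MAp_{F_{i-1}}(\proj_{F_{i-1}}u_1,\ldots,\proj_{F_{i-1}}u_j,\ind_{[o,z_{j+1}]},\ldots,\ind_{[o,z_{n-i+1}]};\,\cdot\cap F_{i-1})
\]
(times $(n-i+1)$) into the analogous measure on $F_i$ with one fewer indicator.

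Formally, I would state the induction hypothesis as: for fixed $1\le j\le n-1$ and $1\le i\le n-j$,
\[
\frac{n!}{(n-i)!}\MAp(u_1,\ldots,u_j,\ind_{[o,z_{j+1}]},\ldots,\ind_{[o,z_n]};B)
=\MAp_{F_i}(\proj_{F_i}u_1,\ldots,\proj_{F_i}u_j,\ind_{[o,z_{j+1}]},\ldots,\ind_{[o,z_{n-i}]};B\cap F_i),
\]
the base $i=1$ being Theorem~\ref{theo:mixed_proj_inter} applied with $e=z_n$ in $\Rn$ to the tuple $(u_1,\ldots,u_j,\ind_{[o,z_{j+1}]},\ldots,\ind_{[o,z_{n-1}]})$. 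For the step $i\to i+1$, I would apply Theorem~\ref{theo:mixed_proj_inter} inside the ambient space $F_i$ with direction vector $z_{n-i}\in F_i$, using the two compatibility facts to identify projections of already-projected functions and to keep the unchanged indicators as indicators. Setting $i=n-j$ then yields the claimed identity. Finally, the independence of the chosen orthonormal basis of $E_j^\perp$ is immediate, since the right-hand side $j!\,\MAp_{E_j}(\proj_{E_j}u_1,\ldots,\proj_{E_j}u_j;B\cap E_j)$ depends only on $E_j$.

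The only mildly delicate point is the bookkeeping of ambient spaces: ensuring that in each iterated application of Theorem~\ref{theo:mixed_proj_inter} the "projection and restriction" operations are taken with respect to the correct ambient subspace. The two compatibility lemmas sketched above handle exactly this issue, and everything else is a routine induction.
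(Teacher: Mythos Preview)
Your proposal is correct and follows exactly the approach indicated in the paper, which simply states that the result follows by ``iterated application of Theorem~\ref{theo:mixed_proj_inter}.'' The compatibility facts you spell out (that $\proj_{F_i}\ind_{[o,z_m]}=\ind_{[o,z_m]}$ when $z_m\in F_i$, and that successive projections compose) are precisely the bookkeeping needed to make the iteration rigorous, and your induction is the natural way to organize it.
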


We also state explicitly the corresponding result for the mixed Monge--Amp\`ere measures.

\begin{corollary}\label{cor:4.15dual}
 Let $1\le j\le n-1$. If $z_{j+1},\ldots,z_n\in\sn$ are pairwise orthogonal and $E_j=\lin\{z_{j+1},\ldots,z_n\}^\perp$, then 
$$n!\MA(v_1,\ldots,v_{j},h_{[o,z_{j+1}]},\ldots,h_{[o,z_{n}]};B)=j!\MA_{E_j}(v_1\vert_{E_j},\ldots, v_{j}\vert_{E_j};B\cap E_j)$$
for $v_1,\ldots,v_{j}\in \fconvf$ and Borel sets $B\subseteq \Rn$. In particular, the measure on the left side is independent of the specific choice of an orthonormal basis $z_{j+1},\ldots,z_n$ of $E_j^\perp$. 
\end{corollary}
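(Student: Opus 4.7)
The strategy is to translate the statement into its conjugate form, apply Corollary~\ref{cor:4.15}, and translate back. Since the mixed Monge--Amp\`ere and conjugate mixed Monge--Amp\`ere measures are linked by \eqref{eq:def_mixed_map} via the Legendre--Fenchel involution, and since $\ind_{[o,z]}^*=h_{[o,z]}$ for every $z\in\sn$, the two statements should be formally equivalent.

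More precisely, given $v_1,\ldots,v_j\in\fconvf$, I would set $u_i=v_i^*\in\fconvs$ for $i=1,\ldots,j$. Then $v_i=u_i^*$ by the involution property of convex conjugation (see \cite[Theorem 11.8]{RockafellarWets}), and thus
\[
\MA(v_1,\ldots,v_j,h_{[o,z_{j+1}]},\ldots,h_{[o,z_n]};B)=\MAp(u_1,\ldots,u_j,\ind_{[o,z_{j+1}]},\ldots,\ind_{[o,z_n]};B)
\]
by \eqref{eq:def_mixed_map}, noting that $\ind_{[o,z_k]}\in\fconvs$ (it is even in $\fconvcd$) and $h_{[o,z_k]}=\ind_{[o,z_k]}^*\in\fconvf$ for each $k$. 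Applying Corollary~\ref{cor:4.15} to the right-hand side yields
\[
n!\MA(v_1,\ldots,v_j,h_{[o,z_{j+1}]},\ldots,h_{[o,z_n]};B)=j!\MAp_{E_j}(\proj_{E_j}u_1,\ldots,\proj_{E_j}u_j;B\cap E_j).
\]

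Next, I would translate the right-hand side back into the primal setting. By the analog of \eqref{eq:def_mixed_map} in the ambient space $E_j$,
\[
\MAp_{E_j}(\proj_{E_j}u_1,\ldots,\proj_{E_j}u_j;B\cap E_j)=\MA_{E_j}\bigl((\proj_{E_j}u_1)^*,\ldots,(\proj_{E_j}u_j)^*;B\cap E_j\bigr),
\]
where conjugation on the right-hand side is taken with respect to $E_j$. Lemma~\ref{le:proj_conj_restr}, applied to each $u_i\in\fconvs$, gives $(\proj_{E_j}u_i)^*=u_i^*\vert_{E_j}=v_i\vert_{E_j}$, so combining with the previous display yields the claimed identity. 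The independence of the left-hand side from the specific choice of the orthonormal basis of $E_j^\perp$ follows either directly from the same property on the conjugate side in Corollary~\ref{cor:4.15}, or simply from the fact that the right-hand side of the identity does not depend on this choice.

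I expect no genuine obstacle: once the correspondence between the $v_i$ and $u_i$ and between $h_{[o,z_k]}$ and $\ind_{[o,z_k]}$ is set up, the proof is a bookkeeping exercise using the involution, Corollary~\ref{cor:4.15}, and Lemma~\ref{le:proj_conj_restr}. The only mild subtlety is keeping track of the ambient space in which the Legendre transform is taken at each step, which Lemma~\ref{le:proj_conj_restr} handles cleanly.
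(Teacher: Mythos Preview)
Your proposal is correct and is precisely the intended argument: the paper states Corollary~\ref{cor:4.15dual} without proof, as the dual reformulation of Corollary~\ref{cor:4.15} obtained via \eqref{eq:def_mixed_map} and Lemma~\ref{le:proj_conj_restr}. Your write-up makes this duality explicit and handles the ambient-space bookkeeping correctly.
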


\section{Main Results for Mixed Monge--Amp\`ere Measures}
\label{se:main_res_ma}

\subsection{Kubota-type Formulas}
\label{se:kubota_ma}
The aim of this section is to prove Theorem~\ref{thm:ck_ma}. By Lemma~\ref{le:proj_conj_restr}, we may consider the equivalent dual statement for conjugate mixed Monge--Amp\`ere measures instead.

\begin{theorem}
\label{thm:ck_map}
If $1\leq k <n$ and $\varphi\colon \Rn\to [0,\infty)$ is measurable, then
\begin{multline*}
\frac{1}{\kappa_n} \int_{\Rn} \varphi(y) \d\MAp(u_1,\ldots,u_k,\ind_{B^n}[n-k];y)\\
=\frac{1}{\kappa_k} \int_{\Grass{k}{n}}\int_E \varphi(y_E) \d\MAp_E(\proj_E u_1,\ldots,\proj_E u_k;y_E) \d E
\end{multline*}
for $u_1,\ldots,u_k\in\fconvs$.
\end{theorem}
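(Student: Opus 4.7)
The plan is to prove the dual statement (Theorem~\ref{thm:ck_map}) by lifting the problem to $\R^{n+1}$, where Theorem~\ref{thm:ck_sam} is already available, and then transferring back via the correspondence $u\mapsto K^u$ developed in Section~\ref{se:connections_ma2}. The key observation is that $K^{\ind_{B^n}} = B^n\times\{0\}$, which under the identification $L=e_{n+1}^\perp$ and $\ell=\operatorname{span}\{e_{n+1}\}$ in $\R^{n+1}$ is precisely $B_L^n$, exactly the body featured in Theorem~\ref{thm:ck_sam}.

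First, a monotone convergence argument reduces the claim to measurable $\varphi\ge 0$ with compact support. For such $\varphi$, both sides of the equation are unchanged when each $u_i\in\fconvs$ is replaced by a suitable $\bar{u}_i\in\fconvcd$ by Lemma~\ref{le:fconvs_fconvcd} (its second assertion handles the projected integrals on the right). So we may assume $u_1,\dots,u_k\in\fconvcd$. Then by Corollary~\ref{cor:mixed_ma_s}, expanded multilinearly to accommodate the $n-k$ repeated arguments $\ind_{B^n}$,
\begin{equation*}
\int_{\Rn}\varphi(y)\,\d\MAp(u_1,\dots,u_k,\ind_{B^n}[n-k];y) = \int_{\s^n_-}\tilde\varphi(\nu)\,\d S(K^{u_1},\dots,K^{u_k},B_L^n[n-k];\nu).
\end{equation*}
Applying Theorem~\ref{thm:ck_sam} in $\R^{n+1}$ to the function $\tilde\varphi$ (which vanishes outside $\s^n_-$) yields
\begin{equation*}
\frac{1}{\kappa_n}\int_{\s^n_-}\tilde\varphi\,\d S(K^{u_1},\dots,K^{u_k},B_L^n[n-k];\cdot) = \frac{1}{\kappa_k}\int_{\Gr(\ell,k+1)}\int_{\s^k_E\cap\s^n_-}\tilde\varphi\,\d S_E(\proj_E K^{u_1},\dots,\proj_E K^{u_k};\cdot)\,\d E.
\end{equation*}

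For each $E\in\Gr(\ell,k+1)$, set $F=E\cap e_{n+1}^\perp\in\Grass{k}{n}$. Since $e_{n+1}\in E$, projection onto $E$ commutes with intersection with the half-space $\{x_{n+1}\le c\}$. Combined with \eqref{eq:proj_epi} this gives $\proj_E\epi(u_i)=\epi_E(\proj_F u_i)$ and hence $\proj_E K^{u_i}=K^{\proj_F u_i}+[o,c_i e_{n+1}]$ for some $c_i\ge 0$, where $K^{\proj_F u_i}$ is the body associated to $\proj_F u_i\in\fconvcd(F)$ inside the ambient space $E$. By Lemma~\ref{le:mx_area_meas_proj}, mixed area measures on the lower half-sphere $\s^k_E\cap\s^n_-$ of $E$ are insensitive to the addition of such vertical segments, so the inner mixed area measure above equals $S_E(K^{\proj_F u_1},\dots,K^{\proj_F u_k};\cdot)$ there. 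A second application of Corollary~\ref{cor:mixed_ma_s}, this time within $E\cong\R^{k+1}$ with $F$ playing the role of $\Rn$, then rewrites the inner integral as $\int_F\varphi(y_F)\,\d\MAp_F(\proj_F u_1,\dots,\proj_F u_k;y_F)$. Finally, the map $F\mapsto F+\ell$ is a bijection from $\Grass{k}{n}$ onto $\Gr(\ell,k+1)$, and both Grassmannians carry the unique probability measure invariant under the stabilizer of $\ell$ in $\SO(n+1)$, so the integrals over the two Grassmannians coincide and produce the right-hand side of Theorem~\ref{thm:ck_map}.

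The main obstacle is the bookkeeping in the middle step: identifying $\proj_E K^{u_i}$ with $K^{\proj_F u_i}$ up to a vertical translation, checking that this translation is irrelevant for the mixed area measure on the relevant half-sphere, and carefully invoking Corollary~\ref{cor:mixed_ma_s} inside the $(k+1)$-dimensional subspace $E$ to descend from mixed area measures of $(k+1)$-dimensional bodies back to conjugate mixed Monge--Amp\`ere measures in the $k$-dimensional space $F$. The reduction to $\fconvcd$ via Lemma~\ref{le:fconvs_fconvcd} and the final passage from compactly supported $\varphi$ to general measurable $\varphi\ge 0$ are routine.
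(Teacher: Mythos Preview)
Your proposal is correct and follows essentially the same route as the paper's proof: reduce to compactly supported $\varphi$ by monotone convergence, pass from $\fconvs$ to $\fconvcd$ via Lemma~\ref{le:fconvs_fconvcd}, lift to mixed area measures in $\R^{n+1}$ through Corollary~\ref{cor:mixed_ma_s} (using $K^{\ind_{B^n}}=B_L^n$), apply Theorem~\ref{thm:ck_sam} there, and then translate each inner integral back via the projection identity $\proj_{F+\ell}K^{u_i}=K^{\proj_F u_i}+[o,c_i e_{n+1}]$ together with Lemma~\ref{le:mx_area_meas_proj} and another use of Corollary~\ref{cor:mixed_ma_s} in the subspace. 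Your write-up is in fact somewhat more explicit than the paper's about the bijection $\Grass{k}{n}\leftrightarrow\Gr(\ell,k+1)$ and the extension of $\tilde\varphi$ by zero, but the argument is the same.
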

\begin{proof}
Throughout the proof, let $1\leq k<n$ and let $\varphi\colon\Rn\to[0,\infty)$ be measurable. By the monotone convergence theorem, we may assume without loss of generality that $\varphi$ has compact support. It follows from Corollary~\ref{cor:mixed_ma_s} and Lemma~\ref{le:fconvs_fconvcd} that for $u_1, \ldots ,u_k \in\fconvs$ there exist $K^{\bar{u}_1},\ldots, K^{\bar{u}_k}\in\K^{n+1}$ (possibly depending on $\varphi$) such that
$$\int_{\Rn} \varphi(y) \d\MAp(u_1,\ldots,u_k,\ind_{B^n}[n-k];y) = \int_{\s_{-}^n} \tilde{\varphi}(\nu) \d S(K^{\bar{u}_1},\ldots,K^{\bar{u}_k},B_{\Rn}^n[n-k],\nu),$$
where $\tilde{\varphi}(\nu)=|\langle \nu,e_{n+1}\rangle|\varphi(\gnom(\nu))$ for $\nu\in \s_{-}^n$ and where $B_{\Rn}^n$ denotes the $n$-di\-men\-sion\-al unit ball in $e_{n+1}^\perp$.

Next, it easily follows from \eqref{eq:proj_epi} and \eqref{def:ku} that there are constants $c_i\ge 0$ such that 
$$K^{\proj_E \bar{u}_i}+[o,c_ie_{n+1}]=\proj_{E\times\lin\{e_{n+1}\}} K^{\bar{u}_i}$$
for every $1\leq i\leq k$ and $E\in\Grass{k}{n}$. Again from Corollary~\ref{cor:mixed_ma_s}, this time applied with respect to the ambient space $E$, and Lemma~\ref{le:fconvs_fconvcd} we obtain
\begin{align*}
\int_E &\varphi(y_E)\d\MAp_E(\proj_E u_1,\ldots,\proj_E u_k;y_E)\\
&= \int_{(\s_E^k)_-} \tilde{\varphi}(\nu_E)\d S_{E\times\lin\{e_{n+1}\}}(\proj_{E\times\lin\{e_{n+1}\}} K^{\bar{u}_1},\ldots,\proj_{E\times\lin\{e_{n+1}\}} K^{\bar{u}_k},\nu_E),
\end{align*}
where also Lemma~\ref{le:mx_area_meas_proj} was used. 
The result now follows from Theorem~\ref{thm:ck_sam}, applied with respect to the ambient space $\R^{n+1}$.
\end{proof}

\begin{remark}
    For an alternative proof of Theorem \ref{thm:ck_map} we remark first that it is enough to show the result for $\varphi\in C_c(\Rn)$. Next, we can choose sequences of functions $u_{j,i}\in \fconvcd$, $i\in\N$, such that $u_{j,i}$ epi-converges to $u_j$ as $i\to\infty$. By \cite[Lemma 3.2]{Colesanti-Ludwig-Mussnig-6}, then also $\proj_E u_{j,i}$ epi-converges to $\proj_E u_j$ with respect to $E$. Applying  \cite[Theorem 5.2 (d)]{Colesanti-Ludwig-Mussnig-7} on the left-hand side, using  \cite[Theorem 5.2 (d)]{Colesanti-Ludwig-Mussnig-7} on the right-hand side for each fixed $E$, as well as the dominated convergence theorem (on the right-hand side), we obtain the asserted relation in full generality.
    
 The application of the dominated convergence theorem can be justified as follows. Let $A\subset\R^n$ be compact and let $u_{j,i}\to u_j$ as $i\to\infty$ with respect to epi-convergence in $\fconvs$, for $j=1,\ldots,k$. It follows that $u_{j,i}^*$ epi-converges to $u_j^*$ as $i\to\infty$ in $\fconvf$ and therefore 
 $$
 \partial u_{j,i}^*(A)\subseteq \partial u_j^*(A)+B^n\subseteq c\cdot B^n,
 $$
if $i$ is sufficiently large, with a constant $c$ depending on $u_j$ and $A$ (the first inclusion can be deduced by a compactness argument from \cite[Theorem 24.5]{Rockafellar}). Using Lemma \ref{le:proj_conj_restr} and Lemma \ref{lem:projpartial} below, we then obtain that
$$
\partial (\proj_E u_{j,i})^*(x_E)=\partial (u_{j,i}^*)\vert_E(x_E)=
\proj_E(\partial u_{j,i}^*(x_E))\subseteq \proj_E(c B^n)=c B^n\cap E$$
 for all $x_E\in A\cap E$, if $i$ is sufficiently large. Hence
 $$
 \MAp_E(\proj_E u_{1,i},\ldots,\proj_E u_{k,i};A)\le c(u_1,\ldots,u_k,A)<\infty,
 $$
if $i$ is large enough, with a constant $c(u_1,\ldots,u_k,A)$ that is independent of $E$.
\end{remark}

 \begin{lemma}\label{lem:projpartial}
    Let $1\leq k\leq n-1$ and $E\in\Grass{k}{n}$. If $v \in \fconvf$ and $x \in E$, then
    \begin{equation}\label{eq:section_projection}
        \partial v\vert_E (x) =\proj_E \partial v(x).
    \end{equation}
\end{lemma}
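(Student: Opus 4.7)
The plan is to prove both inclusions, with the nontrivial direction obtained via Fenchel--Young duality and Lemma~\ref{le:proj_conj_restr}.

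The inclusion $\proj_E \partial v(x) \subseteq \partial v\vert_E(x)$ is immediate: if $y\in\partial v(x)$, then for every $z\in E$ we have $z-x\in E$, so
$$v(z) \geq v(x)+\langle y,z-x\rangle = v(x)+\langle \proj_E y,z-x\rangle,$$
which says $\proj_E y \in \partial v\vert_E(x)$.

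For the reverse inclusion, let $y_E\in\partial v\vert_E(x)$. I would translate the subgradient relation into its Fenchel--Young form, which is $v(x)+(v\vert_E)^*(y_E)=\langle x,y_E\rangle$, where the conjugate on the left is taken with respect to the ambient space $E$. Applying Lemma~\ref{le:proj_conj_restr} with $u=v^*\in\fconvs$ (and using $v^{**}=v$) gives
$$(v\vert_E)^*(y_E) = (\proj_E v^*)(y_E) = \inf_{y'\in E^\perp} v^*(y_E+y').$$
Since this infimum is finite (it equals $\langle x,y_E\rangle-v(x)$), the function $y'\mapsto v^*(y_E+y')$ on $E^\perp$ is proper, lower semicontinuous, and super-coercive in $|y'|$ (because $|y_E+y'|\to\infty$ as $|y'|\to\infty$ and $v^*\in\fconvs$). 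Hence it has compact sublevel sets and attains its infimum at some $y^*_{E^\perp}\in E^\perp$. Setting $y^*=y_E+y^*_{E^\perp}$ and using $x\in E$, $y^*_{E^\perp}\in E^\perp$, we get
$$v(x)+v^*(y^*) = v(x)+(v\vert_E)^*(y_E) = \langle x,y_E\rangle = \langle x,y^*\rangle.$$
By Fenchel--Young equality this gives $y^*\in\partial v(x)$, and $\proj_E y^* = y_E$ by construction.

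The only non-routine step is the attainment of the infimum in the projection formula; this is precisely where the super-coercivity of elements of $\fconvs$ enters, and it is what ties Lemma~\ref{le:proj_conj_restr} (a purely analytic identity) to the geometric statement about subgradients and projections.
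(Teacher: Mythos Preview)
Your proof is correct and takes a genuinely different route from the paper's. For the harder inclusion $\partial v\vert_E(x)\subseteq\proj_E\partial v(x)$, the paper argues by Hahn--Banach (equivalently, separation): given $p\in\partial v\vert_E(x)$, the linear functional $\langle p,\cdot-x\rangle$ on $E$ is dominated by the convex function $y\mapsto v(y)-v(x)$, so it extends to a linear functional $\langle\bar p,\cdot-x\rangle$ on $\R^n$ still dominated by $v(\cdot)-v(x)$; then $\bar p\in\partial v(x)$ and $\proj_E\bar p=p$. You instead dualize via Fenchel--Young and invoke Lemma~\ref{le:proj_conj_restr} (with $u=v^*$) to identify $(v\vert_E)^*(y_E)$ with $\proj_E v^*(y_E)$, then use super-coercivity of $v^*\in\fconvs$ to exhibit a minimizer $y^*$ with $\proj_E y^*=y_E$ and $y^*\in\partial v(x)$. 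The paper's argument is more elementary and uses only convexity and finiteness of $v$; yours leans on the $\fconvf$/$\fconvs$ duality already set up in Section~\ref{se:convex_functions}, which makes the role of super-coercivity transparent and ties the lemma naturally into the paper's framework. One step you use implicitly is that $\proj_E v^*$ coincides with its biconjugate on $E$ (so that conjugating Lemma~\ref{le:proj_conj_restr} is legitimate); this holds because $\proj_E v^*$ is lower semicontinuous for $v^*\in\fconvs$, a fact the paper also relies on without comment (for instance in the proof of Lemma~\ref{le:proj_conv}).
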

\begin{proof}
    If $p \in \proj_E \partial v(x)$, then there exists $\bar{p} \in \partial v(x)$ such that $(\bar{p}-p)\perp E$ and
    $$v(y) \geq v(x) + \langle\bar{p},y-x \rangle$$
    for $y\in \R^n$. In particular, if $y \in E$, then
    $$v\vert_E(y)= v(y)\geq v(x) + \langle\bar{p},y-x \rangle=v\vert_E(x)+\langle p,y -x\rangle.$$
    Therefore, $p \in \partial v\vert_E (x)$.
    
    Conversely, if $p \in \partial v\vert_E (x)$, then $\langle p,\cdot \rangle$ is a linear functional on $E$ such that
    $$v(y)-v(x)\geq \langle p,y-x \rangle$$
    for every $y \in E$. By the Hahn--Banach theorem \cite[Section 12.31]{Schechter} (equivalently, by a separation argument), there exists $\bar{p} \in \R^n$ such that
    $$v(y)-v(x)\geq \langle \bar{p},y-x \rangle$$
    for every $y \in \R^n$ and $\langle \bar{p},y-x\rangle=\langle p, y-x\rangle$ for every $y \in E$. In particular, $\bar{p} \in \partial v(x)$ and $p =\proj_E \bar{p} \in \proj_E \partial v(x)$, proving the claim.
\end{proof}

\begin{remark}
We obtain further formulas by combining Corollary \ref{cor:4.15} and Theorem \ref{thm:ck_map} and replacing the integration over the Grassmannian with an integration over (partial) orthonormal frames (a Stiefel manifold). For $k\in \{1,n-1\}$, the integration over the corresponding Stiefel manifolds can be expressed in terms of an integral over the unit sphere. Specifically, for $k=1$ we have
\begin{align*}   
&\int_{\Rn} \varphi(y) \d\MAp(u,\ind_{B^n}[n-1];y)\\
&\qquad =\frac{(n-1)!}{2} \int_{\sn}\int_{\R^n} \varphi(y) \d\MAp(u,\ind_{[o,z_1(e)]},\ldots,\ind_{[o,z_{n-1}(e)]};y) \d \hm^{n-1}(e),
\end{align*}
where $z_1(e),\ldots,z_{n-1}(e)$ is an arbitrary orthonormal basis of $e^\perp$ (which can but need not be chosen as a measurable function of $e\in \sn$). 

The case $k=n-1$ can be stated in the form
\begin{align*}   
&\int_{\Rn} \varphi(y) \d\MAp(u_1,\ldots,u_{n-1},\ind_{B^n};y)\\
&\qquad =\frac{1}{\kappa_{n-1}}\int_{\sn}\int_{\R^n} \varphi(y) \d\MAp(u_1,\ldots,u_{n-1},\ind_{[o,e]};y) \d \hm^{n-1}(e).
\end{align*}
\end{remark}
 
\subsection{Comparison with Crofton-type Formulas}
\label{se:compare_crofton}
The following was shown in \cite[Theorem 2.1]{Colesanti-Hug_MM}. Note that in \cite{Colesanti-Hug_MM} more general semi-convex functions were treated (in which case signed measures are obtained) and thus the Borel set $\eta\subset\Rn$ is assumed to be relatively compact. For convex functions, such an assumption is not necessary. Recall that $q(x)=|x|^2/2$ for $x\in\Rn$.

\begin{theorem}
If $1\leq k\leq n$, $0\leq j \leq k$, and $\eta\subset\Rn$ is Borel measurable, then
$$\MA(v[k-j], q[n-k+j];\eta)=\int_{\Aff{k}{n}} \MA_E(v\vert_E[k-j],q\vert_E[j];\eta\cap E)\d\mu_k(E)$$
for $v\in\fconvf$.
\end{theorem}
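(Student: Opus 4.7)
The plan is to reduce the Crofton identity to a pointwise Grassmannian identity for symmetric matrices via a two-step reduction: an approximation argument to the smooth case, followed by the affine Blaschke--Petkantschin formula.

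First, I would approximate $v\in\fconvf$ by a sequence $v_i\in\fconvf\cap C^2(\Rn)$, say by convolution with a smooth compactly supported mollifier, so that $v_i\to v$ with respect to pointwise (equivalently, epi-) convergence. Continuity of mixed Monge--Amp\`ere measures under epi-convergence \cite[Theorem 5.2]{Colesanti-Ludwig-Mussnig-7} handles the left-hand side on open relatively compact $\eta$. For each $E\in\Aff{k}{n}$ the restrictions $v_i|_E$ epi-converge to $v|_E$, so the integrand on the right converges; a dominated convergence argument (with uniform local Hessian bounds coming from uniform local Lipschitz control of the $\nabla v_i$'s, independent of $E$) then transfers the identity from the smooth setting.

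For smooth $v$, formula \eqref{eq:connection_MA_Hessian} rewrites both sides as
\begin{align*}
\MA(v[k-j],q[n-k+j];\eta)&=\binom{n}{k-j}^{-1}\int_{\eta}[\Hess v(x)]_{k-j}\,\mathrm{d}x,\\
\MA_E(v|_E[k-j],q|_E[j];\eta\cap E)&=\binom{k}{k-j}^{-1}\int_{\eta\cap E}[\Hess_E(v|_E)(x_E)]_{k-j}\,\mathrm{d}x_E.
\end{align*}
Since $\Hess_E(v|_E)(x_E)$ is the restriction of $\Hess v(x_E)$ to the linear direction $F$ of $E$ and thus depends only on $F$, disintegrating $\mu_k$ as the push-forward of the product of the Haar probability measure on $\Grass{k}{n}$ and Lebesgue measure on $F^\perp$ lets me swap the order of integration and reduces the claim to the pointwise identity
\begin{equation*}
\int_{\Grass{k}{n}}[A|_F]_{k-j}\,\mathrm{d}F=\frac{\binom{k}{k-j}}{\binom{n}{k-j}}\,[A]_{k-j}
\end{equation*}
for every symmetric $n\times n$ matrix $A$, where $A|_F$ denotes the symmetric bilinear form on $F$ induced by $A$.

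I would establish this pointwise identity via a generating function argument. The map $A\mapsto \int_{\Grass{k}{n}}\det(I_F+tA|_F)\,\mathrm{d}F$ is $\SO(n)$-invariant and polynomial of total degree $k$ in the entries of $A$; by standard invariant theory it must be a polynomial in $[A]_0,\ldots,[A]_k$, and expanding $\det(I_F+tA|_F)=\sum_{l=0}^{k}t^l[A|_F]_l$ and matching powers of $t$ forces $\int[A|_F]_l\,\mathrm{d}F$ to be a scalar multiple of $[A]_l$. The constant is pinned down by the test case $A=I_n$, which gives $[I_n|_F]_l=\binom{k}{l}$ and $[I_n]_l=\binom{n}{l}$. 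The main obstacles will be (i) carrying out the dominated convergence in the approximation step uniformly in $E\in\Aff{k}{n}$, and (ii) verifying the invariant-theoretic step cleanly; an alternative, bypassing (ii), is to verify the identity first for quadratic $v$ (where both sides reduce to classical Kubota formulas for intrinsic volumes, fixing all constants) and then extend by applying the polarization formula \eqref{eq:polarization_formula} to $v+\lambda q$ and comparing coefficients in $\lambda$.
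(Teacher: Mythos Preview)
The paper does not prove this statement at all: it is quoted verbatim as \cite[Theorem 2.1]{Colesanti-Hug_MM} and included in Section~\ref{se:compare_crofton} only for the purpose of comparison with Theorem~\ref{thm:ck_ma}. So there is no ``paper's own proof'' to compare against; the original argument in \cite{Colesanti-Hug_MM} is carried out for general semi-convex functions via generalized curvature and Hessian measures, not via the smooth-approximation route you outline.

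Your proposed approach is nevertheless a legitimate and essentially complete plan for the convex case. Reducing to $v\in C^2$, applying the density representation \eqref{eq:connection_MA_Hessian}, disintegrating $\mu_k$ over $\Grass{k}{n}\times F^\perp$, and invoking the Kubota identity
\[
\int_{\Grass{k}{n}}[A|_F]_{l}\,\mathrm{d}F=\frac{\binom{k}{l}}{\binom{n}{l}}\,[A]_{l}
\]
for symmetric $A$ is exactly the right skeleton, and the identity itself is classical (it is equivalent to the linear Kubota formula for intrinsic volumes of ellipsoids, so your alternative of testing on quadratics is also valid). The two places where you should be a bit more careful are: (i) the approximation step needs to go through arbitrary Borel $\eta$, not just open relatively compact ones---since both sides are \emph{non-negative} Radon measures in $\eta$ and agree on a $\pi$-system, this follows by the monotone class theorem once you have equality on compact (or open) sets, so weak convergence on each side suffices; (ii) for the dominated convergence on the right-hand side, the uniform bound you want is $[\Hess_E(v_i|_E)]_{k-j}\le \binom{k}{k-j}\|\Hess v_i\|^{k-j}$, which is controlled locally uniformly in $i$ and independently of $E$ once the mollified sequence has locally uniformly bounded Hessians, as it does. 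With these two points filled in, your argument goes through.
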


We restrict to the case $j=0$ above. It is easy to see that one can obtain the following generalization.

\begin{theorem}
If $1\leq k <n$ and $\eta\subset \Rn$ is Borel measurable, then
$$\MA(v_1,\ldots,v_k,q[n-k];\eta)=\int_{\Aff{k}{n}} \MA_E(v_1\vert_E,\ldots,v_k\vert_E;\eta\cap E)\d\mu_k(E)$$
for $v_1,\ldots,v_k\in\fconvf$.
\end{theorem}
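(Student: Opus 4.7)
The plan is to deduce this mixed identity from the diagonal Crofton-type formula (the case $j=0$ of the previously stated theorem) via a standard polarization argument in the variables $v_1,\ldots,v_k$.

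First, I would fix arbitrary $\lambda_1,\ldots,\lambda_k\ge 0$, set $v_\lambda:=\lambda_1 v_1+\cdots+\lambda_k v_k\in\fconvf$ (using that $\fconvf$ is a convex cone), and apply the single-function formula to $v_\lambda$ to obtain
\begin{equation*}
\MA(v_\lambda[k],q[n-k];\eta)=\int_{\Aff{k}{n}}\MA_E(v_\lambda\vert_E[k];\eta\cap E)\d\mu_k(E).
\end{equation*}
Since $v_\lambda\vert_E=\lambda_1 v_1\vert_E+\cdots+\lambda_k v_k\vert_E$, iterated multilinearity of the mixed Monge--Amp\`ere measure in each of its first $k$ entries (via \eqref{eq:ma_linear}; equivalently, \eqref{eq:mixed_ma_multinom}) expands both sides into finite polynomials in $(\lambda_1,\ldots,\lambda_k)$. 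After exchanging the finite sum with the integral on the right-hand side, I would compare the coefficients of the monomial $\lambda_1\cdots\lambda_k$; by symmetry of the mixed measure in its entries, this coefficient equals $k!\,\MA(v_1,\ldots,v_k,q[n-k];\eta)$ on the left and $k!\int_{\Aff{k}{n}}\MA_E(v_1\vert_E,\ldots,v_k\vert_E;\eta\cap E)\d\mu_k(E)$ on the right. Cancelling $k!$ yields the claim.

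The one step that requires care is the $\mu_k$-measurability of the map $E\mapsto \MA_E(v_1\vert_E,\ldots,v_k\vert_E;\eta\cap E)$ on $\Aff{k}{n}$, which is needed both for the right-hand side of the statement to be well defined and for the finite sum/integral interchange to be legitimate. This follows from the polarization identity \eqref{eq:polarization_formula}, applied within each ambient space $E$, which writes the mixed measure as a finite signed combination of diagonal measures of the form $\MA_E((v_{i_1}+\cdots+v_{i_\ell})\vert_E[k];\cdot)$; each of these is measurable in $E$ by the already established single-function Crofton formula. Beyond this measurability bookkeeping, the argument is entirely standard polarization and presents no further obstacle.
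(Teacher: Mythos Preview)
Your proposal is correct and is precisely the kind of argument the paper has in mind: the paper does not spell out a proof but simply remarks that the mixed statement is an easy generalization of the diagonal case $j=0$ of the preceding theorem from \cite{Colesanti-Hug_MM}, and polarization (as used repeatedly elsewhere in the paper, e.g., in Corollary~\ref{Cor:mamMA} and Theorem~\ref{theo:mixed_proj_inter}) is the evident way to carry this out. Your measurability bookkeeping via \eqref{eq:polarization_formula} is the right justification for the well-definedness of the right-hand side.
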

This result should be compared with Theorem~\ref{thm:ck_ma}. In particular, it becomes evident that - up to normalization - if $(n-k)$ entries of $h_{B^n}$ in a mixed Monge--Amp\`ere measure are replaced with $q$, then in the integral geometric formula the Grassmannian $\Grass{k}{n}$ is replaced with the affine Grassmannian $\Aff{k}{n}$.

\subsection{Supports of Mixed Monge--Amp\`ere Measures}
\label{se:supp_ma}
We start this section with a simple consequence of Corollary~\ref{cor:mixed_ma_s}. Throughout the following, we use the convention that for a measure space $(X,\mathcal{M},\mu)$ and $A\in\mathcal{M}$, we write $\supp \mu\vert_A$ for the support (with respect to the ambient space $A$) of the restriction of $\mu$ to $A$ (considered as a measure on the $\sigma$-algebra induced on $A$).

\begin{corollary}
\label{cor:supp_cma_gnom}
For every $u_1,\ldots,u_n\in\fconvcd$,
$$\supp \MAp(u_1,\ldots,u_n;\cdot) = \gnom\left(\supp S(K^{u_1},\ldots,K^{u_n},\cdot)\big\vert_{\s_-^n}\right).$$
\end{corollary}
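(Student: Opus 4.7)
\medskip

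The plan is to deduce the statement directly from Corollary~\ref{cor:mixed_ma_s} by exploiting two facts: first, that the gnomonic projection $\gnom\colon\s_-^n\to\Rn$ is a homeomorphism, and second, that the weight $|\langle\nu,e_{n+1}\rangle|$ appearing in the definition of $\tilde{\varphi}$ is strictly positive throughout $\s_-^n$. Together these allow the two measures to be compared on neighborhoods in a purely topological way.

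First, I would apply Corollary~\ref{cor:mixed_ma_s} with $\varphi=\chi_U$ for an arbitrary Borel set $U\subseteq\Rn$ to obtain the identity
$$
\MAp(u_1,\ldots,u_n;U)=\int_{\gnom^{-1}(U)}|\langle\nu,e_{n+1}\rangle|\,\d S(K^{u_1},\ldots,K^{u_n},\nu).
$$
Since the integrand on the right-hand side is strictly positive on $\s_-^n$, this forces the equivalence
$$
\MAp(u_1,\ldots,u_n;U)=0\iff S(K^{u_1},\ldots,K^{u_n},\cdot)\bigl(\gnom^{-1}(U)\bigr)=0.
$$

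Next, I would translate this equivalence to a statement about supports. A point $y\in\Rn$ fails to lie in $\supp\MAp(u_1,\ldots,u_n;\cdot)$ exactly when some open neighborhood $U$ of $y$ is a null set for $\MAp(u_1,\ldots,u_n;\cdot)$. Because $\gnom$ is a homeomorphism, $U$ ranges over open neighborhoods of $y$ in $\Rn$ precisely when $\gnom^{-1}(U)$ ranges over open neighborhoods of $\gnom^{-1}(y)$ in $\s_-^n$. Combined with the equivalence from the previous step, this yields
$$
y\notin\supp\MAp(u_1,\ldots,u_n;\cdot)\iff \gnom^{-1}(y)\notin\supp S(K^{u_1},\ldots,K^{u_n},\cdot)\bigl\vert_{\s_-^n}.
$$
Taking contrapositives and applying $\gnom$ on both sides then gives the claimed equality.

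I do not anticipate a genuine obstacle here; the only point requiring mild care is to verify that $\gnom\bigl(\supp S(K^{u_1},\ldots,K^{u_n},\cdot)\bigl\vert_{\s_-^n}\bigr)$ is indeed closed in $\Rn$ (so that the right-hand side agrees set-theoretically with the closed set $\supp\MAp(u_1,\ldots,u_n;\cdot)$), which follows from the fact that $\gnom$ is a homeomorphism onto $\Rn$ and therefore maps relatively closed subsets of $\s_-^n$ to closed subsets of $\Rn$.
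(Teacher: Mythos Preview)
Your proposal is correct and is precisely the argument the paper has in mind: the paper gives no explicit proof, stating only that the result is ``a simple consequence of Corollary~\ref{cor:mixed_ma_s}'', and your derivation via the equivalence of null sets (using that $|\langle\nu,e_{n+1}\rangle|>0$ on $\s_-^n$) together with the fact that $\gnom$ is a homeomorphism is exactly how one unpacks this.
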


For $K\in\K^{n+1}$ and $j\in\{0,\ldots,n\}$, let $\Sdisk_j(K,\cdot) = S(K[j],B_{\Rn}^n[n-j],\cdot)$, where $B_{\Rn}^n$ denotes the $n$-di\-men\-sion\-al unit ball in $e_{n+1}^\perp$.
\begin{lemma}
\label{le:supp_map_j}
Let $u\in\fconvs$. For every bounded and open set $A\subset\Rn$ there exists a body $K_A^u\in\K^{n+1}$ such that
$$\supp \MAp(u[j],\ind_{B^n}[n-j];\cdot)\vert_A = \gnom\left( \supp \Sdisk_j(K_A^u,\cdot)\vert_{\gnom^{-1}(A)} \right)$$
for $1\leq j\leq n$. In addition, we may choose $K_A^{\proj_E u}=\proj_{E\times\lin\{e_{n+1}\}} K_A^u$
and
\begin{multline*}
\supp \MAp_E(\proj_E u;\cdot)\vert_{A\cap E}\\
= \gnom\left( \supp S_{E\times\lin\{e_{n+1}\}}(\proj_{E\times\lin\{e_{n+1}\}} K_A^u,\cdot)\vert_{\gnom^{-1}(A)\cap (E\times\lin\{e_{n+1}\})} \right)
\end{multline*}
for $E\in\Grass{j}{n}$.
\end{lemma}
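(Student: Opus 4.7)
First, I would choose $m>0$ with $\bar A\subseteq\interior(mB^n)$ and invoke the construction in the proof of Lemma~\ref{le:fconvs_fconvcd} to produce $\bar u\in\fconvcd$ such that $\bar u^*=u^*$ on a neighborhood of $(m+1)B^n$. Since $\ind_{B^n}\in\fconvcd$ already and $h_{B^n}=\ind_{B^n}^*$ is globally $1$-Lipschitz, the subdifferential identity from the proof of that lemma extends, via Lemma~\ref{le:sum_subdiff} and the polarization formula~\eqref{eq:polarization_formula}, to
\begin{equation*}
\MAp(u[j],\ind_{B^n}[n-j];\cdot)=\MAp(\bar u[j],\ind_{B^n}[n-j];\cdot)\quad\text{as measures on }mB^n,
\end{equation*}
and analogously to $\MAp_E(\proj_E u;\cdot)=\MAp_E(\proj_E\bar u;\cdot)$ on $mB^n\cap E$ for every $E\in\Grass{j}{n}$, using the projection statement of Lemma~\ref{le:fconvs_fconvcd}. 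I would then define $K_A^u:=K^{\bar u}\in\K^{n+1}$ via~\eqref{def:ku}.

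For the first equation, I would apply Corollary~\ref{cor:supp_cma_gnom} to the tuple $(\bar u,\ldots,\bar u,\ind_{B^n},\ldots,\ind_{B^n})\in(\fconvcd)^n$, with $\bar u$ appearing $j$ times and $\ind_{B^n}$ appearing $n-j$ times. A direct computation from~\eqref{def:ku} gives $K^{\ind_{B^n}}=B_\Rn^n$, so the corollary yields
\begin{equation*}
\supp\MAp(\bar u[j],\ind_{B^n}[n-j];\cdot)=\gnom\bigl(\supp S(K^{\bar u}[j],B_\Rn^n[n-j],\cdot)\big\vert_{\s_-^n}\bigr).
\end{equation*}
Intersecting both sides with $A$ and using that $\gnom\colon\s_-^n\to\Rn$ is a homeomorphism, together with the measure identity from the previous paragraph, produces the first equation of the lemma.

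For the projection claim, since $\proj_E\bar u$ has compact domain in $E$, I would apply Corollary~\ref{cor:supp_cma_gnom} within the $j$-dimensional ambient space $E$ to obtain
\begin{equation*}
\supp\MAp_E(\proj_E\bar u;\cdot)=\gnom\bigl(\supp S_{E\times\lin\{e_{n+1}\}}(K^{\proj_E\bar u},\cdot)\big\vert_{(\s_{E\times\lin\{e_{n+1}\}}^j)_-}\bigr).
\end{equation*}
As noted in the proof of Theorem~\ref{thm:ck_map}, one has $\proj_{E\times\lin\{e_{n+1}\}}K^{\bar u}=K^{\proj_E\bar u}+[o,c\,e_{n+1}]$ for some $c\geq 0$. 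Expanding the top-dimensional area measure of this sum by multilinearity and applying Lemma~\ref{le:mx_area_meas_proj} within $E\times\lin\{e_{n+1}\}$ (with distinguished direction $e_{n+1}$) shows that every mixed summand containing at least one factor $[o,c\,e_{n+1}]$ vanishes on $(\s_{E\times\lin\{e_{n+1}\}}^j)_-$. Consequently, the area measures of $K^{\proj_E\bar u}$ and $\proj_{E\times\lin\{e_{n+1}\}}K^{\bar u}$ coincide there, and restricting to $A\cap E$ together with the measure identity above yields the second equation with the claimed choice $K_A^{\proj_E u}=\proj_{E\times\lin\{e_{n+1}\}}K_A^u$.

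The main obstacle is the careful bookkeeping of the various ambient spaces, together with the verification that the vertical edge $[o,c\,e_{n+1}]$ appearing in the identification of $K^{\proj_E\bar u}$ with $\proj_{E\times\lin\{e_{n+1}\}}K^{\bar u}$ does not alter supports on the lower hemisphere; all remaining parts reduce to direct applications of Lemma~\ref{le:fconvs_fconvcd} and Corollary~\ref{cor:supp_cma_gnom}.
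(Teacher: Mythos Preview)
Your proposal is correct and follows essentially the same route as the paper's proof: reduce to $\fconvcd$ via Lemma~\ref{le:fconvs_fconvcd} (keeping $\ind_{B^n}$ unchanged), apply Corollary~\ref{cor:supp_cma_gnom} with $K^{\ind_{B^n}}=B_{\Rn}^n$, and for the projection part combine the second statement of Lemma~\ref{le:fconvs_fconvcd} with \eqref{eq:proj_epi}, \eqref{def:ku}, and Lemma~\ref{le:mx_area_meas_proj} to absorb the vertical segment $[o,c\,e_{n+1}]$ on the lower hemisphere. The only difference is that you spell out explicitly the verification $K^{\ind_{B^n}}=B_{\Rn}^n$ and the multilinear expansion argument for removing $[o,c\,e_{n+1}]$, which the paper leaves implicit.
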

\begin{proof}
It follows from Lemma~\ref{le:fconvs_fconvcd} that for every bounded $A\subset\Rn$ there exists a function $\bar{u}_A\in\fconvcd$ such that
$$\MAp(u[j],\ind_{B^n}[n-j];\cdot)\vert_A=\MAp(\bar{u}_A[j],\ind_{B^n}[n-j];\cdot)\vert_A$$
for every $1\leq j\leq n$. The first part of the statement now follows from Corollary~\ref{cor:supp_cma_gnom}. The statement regarding projections is a consequence of the second part of Lemma~\ref{le:fconvs_fconvcd}, \eqref{eq:proj_epi}, \eqref{def:ku}, and the second part of Lemma~\ref{le:mx_area_meas_proj}.
\end{proof}

We will use the following definition due to \cite{Colesanti-Hug_JLM}. For $v\in\fconvf$ we say that $x\in\Rn$ is a \emph{$j$-extreme point} of $v$ if there is no $(j+1)$-dimensional ball centered at $x$, such that the restriction of $v$ to this ball is an affine function. In addition, a $0$-extreme point is simply called \emph{extreme}. The set of all extreme points of $v$ is denoted by $\ext(v)$. The following functional analog of Theorem~\ref{thm:supp_s_j} was established in \cite[Theorem 2]{Colesanti-Hug_JLM}.

\begin{theorem}
\label{thm:supp_hess_meas}
Let $v\in\fconvf$ and $1\leq j\leq n$. The support of
$\MA(v[j],q[n-j];\cdot)$ is the closure of the set of all $(n-j)$-extreme points of $v$.
\end{theorem}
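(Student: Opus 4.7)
For $v\in C^2(\Rn)\cap\fconvf$ the result is almost immediate from relation~\eqref{eq:connection_MA_Hessian}, which shows that $\binom{n}{j}\MA(v[j],q[n-j];\cdot)$ is absolutely continuous with density $[\Hess v(x)]_j$. Convexity forces $\Hess v(x)\ge 0$, so $[\Hess v(x)]_j>0$ iff $\operatorname{rank}(\Hess v(x))\ge j$, equivalently $\dim\ker\Hess v(x)\le n-j$. In the $C^2$-convex setting, a direction $e$ lies in $\ker\Hess v(x)$ precisely when $v$ is affine on the segment $\{x+te:|t|<\delta\}$ for some $\delta>0$, so $\ker\Hess v(x)$ coincides with the linearity subspace of $v$ at $x$. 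Consequently $[\Hess v(x)]_j>0$ characterizes the $(n-j)$-extreme points of $v$, and the support of this absolutely continuous measure is the closure of the positivity set of its density, proving the theorem in the smooth case.

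For a general $v\in\fconvf$, the plan is to invoke the Crofton-type formula from Section~\ref{se:compare_crofton},
\[
\MA(v[j],q[n-j];\eta)=\int_{\Aff{j}{n}}\MA_E(v\vert_E[j];\eta\cap E)\,\d\mu_j(E),
\]
which represents the measure as an average over $j$-dimensional affine sections. This reduces matters to describing the support of an ordinary (full-rank) $j$-dimensional Monge--Amp\`ere measure $\MA_E(w[j];\cdot)$ of a convex function $w\colon E\to\R$. Since $\MA_E(w[j];\cdot)$ is by definition the push-forward of the $j$-dim Lebesgue measure under the subdifferential of $w$, its support can be identified directly with the closure of the set of $0$-extreme points of $w$ in $E$: affinity of $w$ along any segment through $x$ confines the subgradient image locally to an affine hyperplane, yielding an $\MA_E$-null neighborhood of $x$, while failure of any such affinity yields a subgradient image of positive $j$-dim Lebesgue measure on every neighborhood of $x$ by a standard convex-analysis argument.

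The final step is the equivalence: $x$ is $(n-j)$-extreme for $v$ iff there exists $E\in\Aff{j}{n}$ with $x\in E$ such that $v\vert_E$ is $0$-extreme at $x$ in $E$. Denoting by $\Lambda\subseteq\Rn$ the linearity subspace of $v$ at $x$, the condition ``$(n-j)$-extreme'' reads $\dim\Lambda\le n-j$. If this holds, a dimension count produces a $j$-dim $E\ni x$ with $(E-x)\cap\Lambda=\{0\}$, so $v\vert_E$ is $0$-extreme at $x$; if $\dim\Lambda\ge n-j+1$, then for every $j$-dim $E\ni x$ we have $\dim((E-x)\cap\Lambda)\ge 1$, so $v\vert_E$ fails to be $0$-extreme at $x$. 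Combining this equivalence with the Crofton disintegration and passing from pointwise statements to open neighborhoods via a genericity argument on $\Aff{j}{n}$ yields both inclusions. The principal obstacle is the non-smooth case: one must verify that the linearity subspace of a convex function at a point is genuinely a linear subspace---requiring a convexity argument that affinity in two directions at $x$ implies affinity on the spanned $2$-plane near $x$---and that generic affine $j$-dim slices faithfully detect $\dim\Lambda$ in the Crofton average despite the possibility of exceptional null sets.
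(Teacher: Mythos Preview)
The paper does not prove this theorem; it is quoted as \cite[Theorem~2]{Colesanti-Hug_JLM}, so there is no in-paper proof to compare against and I assess your plan on its own merits.

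Your Crofton reduction to the full Monge--Amp\`ere case in dimension $j$ is a reasonable strategy, and you have correctly flagged the two delicate points. But both are genuine gaps as written. First, for the base case, the claim that ``affinity of $w$ along any segment through $x$ confines the subgradient image locally to an affine hyperplane, yielding an $\MA_E$-null neighborhood of $x$'' is incorrect: affinity of $w$ along a single segment through $x$ only forces $\partial w(x)\subseteq\{\langle\,\cdot\,,e\rangle=c\}$, not $\partial w(V)$ for a whole neighbourhood $V$ of $x$. What one actually needs is that if \emph{no} point of an open $U$ is $0$-extreme then $\MA_E(w;U)=0$; since the linearity direction $e_y$ may vary with $y\in U$, the image $\partial w(U)$ need not sit in a single hyperplane. (A correct route: for each $p\in\partial w(U)$ the set $\partial w^*(p)$ contains a nontrivial segment, hence $\partial w(U)$ lies in the Lebesgue-null non-differentiability set of $w^*$.) The converse direction of the base case ($0$-extreme $\Rightarrow$ in the support) is likewise not a one-liner; it is exactly the case $j=\dim E$ of the theorem you are trying to prove, and the phrase ``standard convex-analysis argument'' does not supply it.

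Second, the genericity step is more than a technicality. The subset $\{E\in\Aff{j}{n}:x\in E\}$ has $\mu_j$-measure zero, so exhibiting a \emph{single} $E_0\ni x$ with $v\vert_{E_0}$ $0$-extreme at $x$ contributes nothing to the Crofton integral. To conclude that $\MA(v[j],q[n-j];V)>0$ you must show that $E\mapsto\MA_E(v\vert_E;V\cap E)$ is positive on a set of positive $\mu_j$-measure---for instance by proving lower semicontinuity of this map in $E$ (via weak convergence of Monge--Amp\`ere measures under local uniform convergence of the restricted functions, together with an inner approximation of the moving open slice $V\cap E$). This can be done, but it is substantive work and is not captured by the phrase ``genericity argument''.
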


As a first main result of this section, we prove the following description of the support of the measures $\MA(v[j],h_{B^n}[n-j];\cdot)$ with $v\in\fconvf$ and $1\leq j\leq n$.

\begin{theorem}
\label{thm:supp_MAj_rest}
If $1\leq j\leq n$ and $v\in\fconvf$, then 
\begin{align*}
&\supp \MA(v[j],h_{B^n}[n-j];\cdot)\\
&\qquad=\cl\left(\{z\in \R^n:\exists E\in\Grass{j}{n} \text{ \rm such that }z\in\ext_E(\nu\vert_E)\}\right).
\end{align*}
\end{theorem}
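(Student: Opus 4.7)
The plan is to follow the blueprint of the proof of Theorem~\ref{thm:descrp_supp_s_proj}, with the Kubota-type formula for mixed Monge--Amp\`ere measures (Theorem~\ref{thm:ck_ma}) replacing Theorem~\ref{thm:ck_sam} and the support description of Theorem~\ref{thm:supp_hess_meas} replacing Theorem~\ref{thm:supp_s_j}. The case $j=n$ is immediate: $\Grass{n}{n}=\{\R^n\}$ and Theorem~\ref{thm:supp_hess_meas} applied with $v$ repeated $n$ times shows $\supp \MA(v[n];\cdot)$ is the closure of the set of $0$-extreme (i.e., extreme) points of $v$. For $1\leq j<n$, set
$$T(v,j)=\{z\in\R^n : \exists\,E\in\Grass{j}{n}\text{ with }z\in E\text{ and }z\in \ext_E(v\vert_E)\}.$$
Applying Theorem~\ref{thm:supp_hess_meas} inside each $E$ (with $j$ copies of $v\vert_E$ and no copies of $q\vert_E$) and using that $E$ is closed in $\R^n$, one obtains the functional analog of \eqref{eq:refThm3.8new},
$$\cl T(v,j) = \cl \bigcup_{E\in\Grass{j}{n}} \supp \MA_E(v\vert_E[j];\cdot).$$

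The central auxiliary ingredient, analogous to Lemma~\ref{le:int_s_cont_grass}, is a continuity statement: for each $v\in\fconvf$ and $f\in C_c(\R^n)$ the map
$$I_{f,v}\colon\Grass{j}{n}\to\R,\qquad E\mapsto \int_E f(x_E)\,\d\MA_E(v\vert_E[j];x_E),$$
is continuous. Given $E_i\to E$, I would pick $\vartheta_i\in\SO(n)$ with $\vartheta_i\to I$ and $\vartheta_i(E_i)=E$ so that, after change of variables,
$$I_{f,v}(E_i) = \int_E (f\circ\vartheta_i^{-1})(x_E)\,\d\MA_E\bigl((v\circ\vartheta_i^{-1})\vert_E[j];x_E\bigr).$$
Since $v\circ\vartheta_i^{-1}\to v$ pointwise in $\fconvf$, the restrictions $(v\circ\vartheta_i^{-1})\vert_E$ epi-converge to $v\vert_E$; by the weak continuity of mixed Monge--Amp\`ere measures under epi-convergence (as used, e.g., in the alternative proof of Theorem~\ref{thm:ck_map}), combined with uniform convergence $f\circ\vartheta_i^{-1}\to f$ (from uniform continuity of $f$ together with $\vartheta_i\to I$) and the fact that the supports $\vartheta_i(\supp f)$ all lie in a common compact set, one concludes $I_{f,v}(E_i)\to I_{f,v}(E)$.

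With these tools the two inclusions go exactly as in the proof of Theorem~\ref{thm:descrp_supp_s_proj}. If $z\notin \supp\MA(v[j],h_{B^n}[n-j];\cdot)$, pick a non-negative $f\in C_c(\R^n)$ with $f(z)>0$ and $\supp f$ contained in an open neighborhood of $z$ on which the measure vanishes. Then $\int f\,\d\MA(v[j],h_{B^n}[n-j];\cdot)=0$, and Theorem~\ref{thm:ck_ma} together with continuity and non-negativity of $I_{f,v}$ forces $I_{f,v}(E)=0$ for every $E\in\Grass{j}{n}$. For each $E\ni z$ this yields $z\notin \supp\MA_E(v\vert_E[j];\cdot)=\cl \ext_E(v\vert_E)$, hence $z\notin T(v,j)$; taking closures gives $\cl T(v,j)\subseteq \supp\MA(v[j],h_{B^n}[n-j];\cdot)$. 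Conversely, if $z\notin \cl T(v,j)$, the reformulation above provides an open ball $U\ni z$ disjoint from every $\supp \MA_E(v\vert_E[j];\cdot)$, whence $\MA_E(v\vert_E[j];U\cap E)=0$ for all $E$; applying Theorem~\ref{thm:ck_ma} to $\varphi=\chi_U$ gives $\MA(v[j],h_{B^n}[n-j];U)=0$, so $z\notin \supp\MA(v[j],h_{B^n}[n-j];\cdot)$. The main obstacle is the continuity lemma, since it requires simultaneously handling the moving ambient space and the moving input functions; the rotation argument of Lemma~\ref{le:int_s_cont_grass}, together with the epi-convergence machinery from \cite{Colesanti-Ludwig-Mussnig-7} already invoked elsewhere in this paper, provides the template.
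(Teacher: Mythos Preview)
Your proof is correct, but it takes a genuinely different route from the paper's. The paper proves Theorem~\ref{thm:supp_MAj_rest} by lifting to convex bodies in $\R^{n+1}$: given $z_0$ and a bounded open neighborhood $A$, Lemma~\ref{le:supp_map_j} converts the question about $\supp\MA(v[j],h_{B^n}[n-j];\cdot)\vert_A$ into one about $\supp S(K_A^{v^*}[j],B_{\R^n}^n[n-j],\cdot)$ on the lower half-sphere via the gnomonic projection; Theorem~\ref{thm:descrp_supp_s_proj} (together with Remark~\ref{re:descrp_supp_s_proj}) then supplies the description in terms of projections of $K_A^{v^*}$, and another application of Lemma~\ref{le:supp_map_j} and Lemma~\ref{le:proj_conj_restr} translates this back to $\supp\MA_E(v\vert_E;\cdot)$, at which point Theorem~\ref{thm:supp_hess_meas} finishes the argument. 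In short, the paper reuses the body-level result rather than reproving it functionally.

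You instead replicate the structure of the proof of Theorem~\ref{thm:descrp_supp_s_proj} directly in the analytic setting, using Theorem~\ref{thm:ck_ma} in place of Theorem~\ref{thm:ck_sam} and proving a continuity lemma for $E\mapsto\int_E f\,\d\MA_E(v\vert_E;\cdot)$ analogous to Lemma~\ref{le:int_s_cont_grass}. This is precisely the alternative strategy the authors sketch in Remark~\ref{re:direct_proof}. Your rotation argument for the continuity lemma is sound: pointwise convergence $v\circ\vartheta_i^{-1}\to v$ in $\fconvf$ gives epi-convergence of the restrictions to $E$, the weak continuity of (mixed) Monge--Amp\`ere measures from \cite{Colesanti-Ludwig-Mussnig-7} then yields convergence against the fixed test function $f$, and the uniform bound on $\MA_E((v\circ\vartheta_i^{-1})\vert_E;K)$ for a common compact $K$ follows from weak convergence itself. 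The paper's route is shorter given the machinery already built (the $K^u$ construction, Corollary~\ref{cor:mixed_ma_s}, and the localization Lemmas~\ref{le:fconvs_fconvcd} and \ref{le:supp_map_j}), while yours is more self-contained on the functional side and avoids the passage through $\K^{n+1}$ and the gnomonic projection entirely.
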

\begin{proof}
Let $z_0\in\R^n$ and let $A\subset \Rn$ be bounded and open such that $A$ contains $z_0$ in its interior. It follows from definition \eqref{eq:def_mixed_map} and Lemma~\ref{le:supp_map_j} that
\begin{align*}
\supp \MA(v[j],h_{B^n}[n-j];\cdot)\vert_A&=\supp \MAp(v^*[j],\ind_{B^n}[n-j];\cdot)\vert_A\\
&= \gnom\left(\supp \Sdisk_j(K_A^{v^*},\cdot)\vert_{\gnom^{-1}(A)}\right),
\end{align*}
which means that $z_0\in\supp \MA(v[j],h_{B^n}[n-j];\cdot)$ if and only if
$$\gnom^{-1}(z_0)\in \supp \Sdisk_j(K_A^{v^*},\cdot).$$
By Theorem~\ref{thm:descrp_supp_s_proj} and Remark~\ref{re:descrp_supp_s_proj}, this is satisfied if and only if $z_0$ is in the closure of the set of all $z\in\R^n$ for which there exists a $E\in\Grass{j}{n}$ such that
$$\gnom^{-1}(z)\in\supp S_{E\times \lin\{e_{n+1}\}}(\proj_{E\times \lin\{e_{n+1}\}} K_A^{v^*}[j],\cdot).$$
By Lemma~\ref{le:proj_conj_restr} and Lemma~\ref{le:supp_map_j}, this is the case if and only if
$$z\in\supp\MA_E(v\vert_E).$$
The result now follows from Theorem~\ref{thm:supp_hess_meas}.
\end{proof}

Note that Theorem~\ref{thm:ma_supp} can be obtained as an immediate consequence of Theorem~\ref{thm:supp_MAj_rest}. Next, we state and prove an equivalent version which concerns the measures $\MAp(u[j],\ind_{B^n}[n-j];\cdot)$ with $u\in\fconvs$.

\begin{theorem}\label{thm:finmonotone}
If $1\leq j\leq k\leq n$, then
$$\supp \MAp(u[k],\ind_{B^n}[n-k];\cdot)\subseteq \supp \MAp(u[j],\ind_{B^n}[n-j];\cdot)$$
for $u\in\fconvs$.
\end{theorem}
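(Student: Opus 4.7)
The plan is to reduce Theorem \ref{thm:finmonotone} to Theorem \ref{thm:ma_supp} via the standard duality between $\MAp$ and $\MA$, which is essentially built into the definitions. No new ideas are needed beyond observing that the support statement for $\MAp(u[j],\ind_{B^n}[n-j];\cdot)$ is literally the support statement for $\MA(u^*[j],h_{B^n}[n-j];\cdot)$.

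First, I would note that $\ind_{B^n}\in\fconvs$ and its Legendre--Fenchel conjugate is $(\ind_{B^n})^*=h_{B^n}$, as recalled in the discussion following the definition of the convex indicator function in Section \ref{se:convex_functions}. Moreover, since $u\in\fconvs$, its conjugate satisfies $u^*\in\fconvf$. Invoking definition \eqref{eq:def_mixed_map} of the conjugate mixed Monge--Amp\`ere measure therefore yields, for every $0\le i\le n$,
\begin{equation*}
\MAp(u[i],\ind_{B^n}[n-i];\cdot)=\MA(u^*[i],h_{B^n}[n-i];\cdot),
\end{equation*}
as Borel measures on $\R^n$. In particular, both measures share the same support.

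Second, with $v:=u^*\in\fconvf$, Theorem \ref{thm:ma_supp} (equivalently, the direct consequence of Theorem \ref{thm:supp_MAj_rest} noted just before the statement of Theorem \ref{thm:finmonotone}) applied to the assumption $1\le j\le k\le n$ gives
\begin{equation*}
\supp \MA(v[k],h_{B^n}[n-k];\cdot)\subseteq \supp \MA(v[j],h_{B^n}[n-j];\cdot).
\end{equation*}
Substituting back $v=u^*$ and using the identification of supports from the previous paragraph, this becomes exactly
\begin{equation*}
\supp \MAp(u[k],\ind_{B^n}[n-k];\cdot)\subseteq \supp \MAp(u[j],\ind_{B^n}[n-j];\cdot),
\end{equation*}
which is the claim. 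The only conceivable obstacle is purely cosmetic, namely ensuring that the duality is applied to a legitimate pair $(v,h_{B^n})$ with $v\in\fconvf$; this is immediate from $u\in\fconvs\Leftrightarrow u^*\in\fconvf$ together with $(\ind_{B^n})^*=h_{B^n}$. No extra work is required.
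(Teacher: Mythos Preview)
Your argument is correct: by definition \eqref{eq:def_mixed_map} and $(\ind_{B^n})^*=h_{B^n}$, the measure $\MAp(u[i],\ind_{B^n}[n-i];\cdot)$ coincides with $\MA(u^*[i],h_{B^n}[n-i];\cdot)$, so Theorem~\ref{thm:ma_supp} applied to $v=u^*\in\fconvf$ gives the claim immediately. There is no circularity, since Theorem~\ref{thm:ma_supp} has already been established as a consequence of Theorem~\ref{thm:supp_MAj_rest} at this point in the paper.

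The paper's own one-line proof is packaged slightly differently: rather than invoking Theorem~\ref{thm:ma_supp}, it appeals directly to Lemma~\ref{le:supp_map_j} (which identifies, locally via the gnomonic projection, $\supp\MAp(u[j],\ind_{B^n}[n-j];\cdot)$ with $\supp S(K_A^u[j],B_{\Rn}^n[n-j],\cdot)$) and then to Corollary~\ref{cor:supp_s_nested} on the convex-body side. Your route and the paper's route rest on exactly the same underlying machinery (Lemma~\ref{le:supp_map_j} and the nesting result for mixed area measures); you simply go through the already-assembled intermediate Theorem~\ref{thm:ma_supp}, whereas the paper reassembles the ingredients directly. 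Neither approach offers a real advantage over the other here.
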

\begin{proof}
The statement is an immediate consequence of Lemma~\ref{le:supp_map_j} and Corollary~\ref{cor:supp_s_nested}.
\end{proof}

\begin{remark}
\label{re:direct_proof}
The proof of Theorem~\ref{thm:ma_supp} - taking the proof of Corollary~\ref{cor:supp_s_nested} into account - almost exclusively concerns convex bodies. We only translate our results into the functional world in the very last step. Let us point out that Theorem~\ref{thm:ma_supp} can also be proven directly in the analytic setting by essentially replicating the proof of Corollary~\ref{cor:supp_s_nested} in the realm of functions. First, the necessary Kubota-type formula, Theorem~\ref{thm:ck_ma}, needs to be established. For this one proceeds as in Section~\ref{se:kubota}, but instead of  Lemma~\ref{le:mx_area_meas_proj} one uses Lemma~\ref{le:mx_ma_meas_restr} and the somewhat unwieldy Grassmannian $\Gr(\ell,j)$ is replaced with the more natural $\Grass{j}{n}$. Next, one uses ideas similar to those presented in Section~\ref{se:desc_supp}: with the help of Theorem~\ref{thm:supp_hess_meas} one shows that if an open set $\omega\subseteq\Rn$ and a function $v\in\fconvf$ are such that $\MA_E(v\vert_E;\omega\cap E)=0$ for every $E\in\Grass{n-1}{n}$, then also $\MA(v;\omega)=0$. Together with Theorem~\ref{thm:ck_ma}, this then implies Theorem~\ref{thm:ma_supp}.
\end{remark}

\subsection*{Acknowledgments}
The authors want to thank Rolf Schneider for valuable discussions. Furthermore, they are grateful to Jonas Knoerr, Dylan Langharst, and Ramon van Handel for their helpful comments. Parts of this project were carried out while the authors visited the Institute for Computational
and Experimental Research in Mathematics in Providence, RI, during the Harmonic Analysis and Convexity program in Fall 2022. Daniel Hug was supported by DFG research grant HU 1874/5-1
(SPP 2265). Fabian Mussnig was supported by the Austrian Science Fund (FWF): 10.55776/J4490 and 10.55776/P36210. Jacopo Ulivelli was supported, in part, by the Austrian Science Fund (FWF): 10.55776/P34446.

\footnotesize

\bigskip\bigskip
\parindent 0pt\footnotesize

\parbox[t]{10cm}{
Daniel Hug\\
Institut f\"ur Stochastik, Karlsruhe Institute of Technology (KIT)\\ Englerstra{\ss}e 2, 76128 Karlsruhe, Germany\\
e-mail: daniel.hug@kit.edu}

\bigskip

\parbox[t]{10cm}{
Fabian Mussnig\\
Institut f\"ur Diskrete Mathematik und Geometrie, TU Wien\\
Wiedner Hauptstra{\ss}e 8-10, E104-06, 1040 Wien, Austria\\
e-mail: fabian.mussnig@tuwien.ac.at}

\bigskip

\parbox[t]{10cm}{
Jacopo Ulivelli\\
Institut f\"ur Diskrete Mathematik und Geometrie, TU Wien\\
Wiedner Hauptstra{\ss}e 8-10, E104-06, 1040 Wien, Austria\\
e-mail: jacopo.ulivelli@tuwien.ac.at}

\end{document}